\documentclass[11pt]{article}

\usepackage[margin=1.12in]{geometry}
\usepackage{amsmath,amssymb,amsthm,mathtools}
\usepackage{bm}
\usepackage{enumitem}
\usepackage{microtype}
\usepackage[dvipsnames]{xcolor}
\usepackage[numbers,sort&compress]{natbib}
\usepackage[colorlinks=true,allcolors=MidnightBlue]{hyperref}
\usepackage[nameinlink,noabbrev]{cleveref}

\allowdisplaybreaks
\setlist[itemize]{leftmargin=1.7em}
\setlist[enumerate]{leftmargin=1.9em}

\newtheorem{theorem}{Theorem}[section]
\newtheorem{proposition}[theorem]{Proposition}
\newtheorem{lemma}[theorem]{Lemma}
\newtheorem{corollary}[theorem]{Corollary}
\newtheorem{assumption}[theorem]{Assumption}
\theoremstyle{remark}
\newtheorem{remark}[theorem]{Remark}

\newcommand{\T}{\mathbb T}
\newcommand{\R}{\mathbb R}
\newcommand{\Pp}{\mathcal P}
\newcommand{\F}{\mathcal F}
\newcommand{\E}{\mathbb E}
\newcommand{\Ent}{\operatorname{Ent}}
\newcommand{\diag}{\operatorname{diag}}
\newcommand{\dist}{\operatorname{dist}}
\newcommand{\dBL}{d_{\mathrm{BL}}}
\newcommand{\dd}{\,\mathrm d}
\newcommand{\bpi}{b_\pi}

\hypersetup{
  pdftitle={Riesz-Kernel Stein Variational Gradient Descent: Renormalized Entropy and Long-Time Particle Limits},
  pdfauthor={Trevor Teolis and Maarten V. de Hoop}
}

\title{\textbf{Riesz-Kernel Stein Variational Gradient Descent:}\\
Renormalized Entropy and Long-Time Particle Limits}
\author{Trevor Teolis\\Rice University
\and Maarten V. de Hoop\\Rice University}
\date{\today}

\begin{document}
\maketitle

\begin{abstract}
Stein variational gradient descent (SVGD) transports interacting particles
toward a target distribution through deterministic kernelized dynamics.
Singular Riesz kernels are attractive because they can provide quantitative
population-level convergence, but at the
finite-particle level the corresponding Stein energy has infinite
self-interaction.  We study periodic Riesz SVGD with self-interaction removed
and prove a many-particle, long-time sampling theorem.  Throughout the range
in which the singular Stein energy is locally integrable, under a uniform
bound on the initial relative entropy per particle, the time-averaged
empirical-measure law converges weakly to the point mass \(\delta_\pi\) at the
target as the particle number and any diverging averaging horizon tend to
infinity.  We also show that the empirical-measure laws induced by invariant
particle laws of finite relative entropy converge weakly to \(\delta_\pi\),
without a uniform entropy bound.  Below the logarithmic singularity threshold,
we obtain an explicit algebraic finite-particle error bound.  These results
extend the joint-entropy approach for smooth-kernel SVGD to singular
interactions.
\end{abstract}

\section{Introduction}

Sampling from a target probability density \(\pi\), known only up to
normalization, is a basic problem in Bayesian inference and computational
statistics.  At the population level, overdamped Langevin dynamics realizes
the Wasserstein
gradient flow of relative entropy through independent score-driven
diffusions.  The corresponding deterministic continuity equation has velocity
\(-\nabla\log(\rho/\pi)\), which depends on the unknown evolving density
\(\rho\) and therefore cannot be evaluated directly on an empirical measure.

Stein variational gradient descent (SVGD), introduced by
\citet{LiuWang2016}, takes a different route.  It projects this velocity onto
a kernel space and uses integration by parts to remove
\(\nabla\log\rho\).  Write \(\pi=Z^{-1}e^{-V}\) on a \(d\)-dimensional
state space, where \(Z\) is the normalizing constant, and let
\(\bpi:=\nabla\log\pi=-\nabla V\).  For a smooth symmetric positive-definite
kernel \(k\), the standard continuous-time particle system is
\begin{equation}\label{eq:standard-svgd-intro}
 \dot X_i(t)
 =\frac1N\sum_{j=1}^N
 \left\{\nabla_1 k(X_j(t),X_i(t))
       +k(X_j(t),X_i(t))\bpi(X_j(t))\right\}.
\end{equation}
Here \(\nabla_1\) differentiates the first kernel argument, and the sum
includes self-interaction.  The projected velocity depends on the target only
through its score, so the normalizing constant is not needed.  The
score-weighted term transports the ensemble toward the target, while the
kernel derivative supplies a repulsive interaction.  Thus SVGD moves the
empirical measure
\(\mu_t^N:=N^{-1}\sum_{i=1}^N\delta_{X_i(t)}\) without estimating its density
or choosing a parametric approximating family.  It trades independent
stochastic trajectories for a coordinated deterministic approximation of the
target law.

Our concern is the long-time behavior of this coordinated particle
transport.  Unlike Langevin dynamics, the finite system has no diffusive
mechanism enforcing ergodicity, and the particles cannot be analyzed as
independent samples.  The resulting question is how to obtain a
many-particle, long-time sampling statement from deterministic interacting
dynamics whose fixed-\(N\) trajectories need not converge to the target.
The quantity connecting the dynamics to convergence is the kernel Stein
discrepancy.  If \(\mathcal H_k\) is the reproducing-kernel Hilbert space of
\(k\), then
\begin{equation}\label{eq:standard-ksd-intro}
\begin{aligned}
 \operatorname{KSD}_k^2(\mu\mid\pi)
 &:=
 \left\|\int
 \bigl\{\nabla_x k(x,\cdot)+k(x,\cdot)\bpi(x)\bigr\}
 \,\dd\mu(x)\right\|_{\mathcal H_k^d}^{2}.
\end{aligned}
\end{equation}
It vanishes at the target and measures the size of the
kernel-projected transport velocity.  Along the population SVGD flow, it is
exactly the rate at which relative entropy decreases.  For \(P\ll Q\), write
\begin{equation}
\label{eq:relative-entropy}
\Ent(P\mid Q):=\int\log(\dd P/\dd Q)\,\dd P. 
\end{equation}
If \(\rho_t^k\) is the
smooth-kernel population SVGD flow, then
\begin{equation}\label{eq:standard-population-dissipation-intro}
 \frac{\dd}{\dd t}\Ent(\rho_t^k\mid\pi)
 =-\operatorname{KSD}_k^2(\rho_t^k\mid\pi).
\end{equation}
The population gradient-flow theory was developed by \citet{Liu2017} and
\citet{DuncanNuskenSzpruch2023}, while \citet{LuLuNolen2019} established the
mean-field limit.  For smooth kernels, \citet{CarrilloSkrzeczkowski2023}
subsequently obtained longer-time stability estimates for particle systems
initialized near the target.  The KSD is also used by
\citet{ChwialkowskiEtAl2016} to test whether a finite sample is consistent
with a specified target distribution and by \citet{GorhamMackey2017} to
quantify how well samples approximate the target.

This perspective has led both to new analyses and to modifications of the
algorithm.  \citet{WangTangBajajLiu2019} introduced matrix-valued kernels that
incorporate problem geometry through preconditioning, while
\citet{LiLiLiuLiuLu2020} used random batches to reduce the cost of the
all-to-all interaction.  Kernel design can also change the underlying
dissipation:
\citet{ChewiEtAl2020} recast SVGD as a kernelized Wasserstein gradient flow of
the chi-squared divergence and introduced Laplacian Adjusted Wasserstein
Gradient Descent (LAWGD), whose target-adapted kernel represents the inverse
Langevin generator and yields strong population convergence under a Poincar\'{e}
inequality.  The required spectral decomposition, however, makes LAWGD a
different computational regime from the translation-invariant scalar kernels
considered here.

For smooth kernels, the KSD can be evaluated directly on empirical measures,
including the self-interaction terms.  \citet{KorbaEtAl2020} established
non-asymptotic descent estimates for population SVGD and quantitative bounds
on its time-averaged KSD, \citet{NuskenRenger2023} developed a variational
description of the many-particle and long-time regimes, and
\citet{ShiMackey2023} obtained the first explicit finite-particle rate for
driving the KSD toward zero.  Most closely related to our argument,
\citet{BalasubramanianBanerjeeGhosal2024} introduced the normalized
joint-law entropy
\[
 H_N^{\mathrm{sm}}(t)
 :=\frac1N
 \Ent\!\left(P_t^{N,\mathrm{sm}}\mid\pi^{\otimes N}\right),
\]
where \(P_t^{N,\mathrm{sm}}\) is the joint law of the smooth-kernel particle
system.  Their entropy calculation yields
\[
 \frac1T\int_0^T
 \E\!\left[\operatorname{KSD}_k^2(\mu_t^N\mid\pi)\right]\,\dd t
 \le
 \frac{H_N^{\mathrm{sm}}(0)}{T}+\frac{C_k}{N}.
\]
Consequently, for uniformly bounded initial normalized entropy,
\[
 \lim_{T\to\infty}\limsup_{N\to\infty}
 \frac1T\int_0^T
 \E\!\left[\operatorname{KSD}_k^2(\mu_t^N\mid\pi)\right]\,\dd t
 =0.
\]
Under their additional assumptions, this also yields convergence of
time-averaged particle marginals.  More recently,
\citet{BalasubramanianBanerjeeKorba2026} proved propagation-of-chaos bounds,
uniform over the averaging horizon, for time-averaged smooth-kernel SVGD;
their uniform-in-physical-time parametric rates concern special finite-rank
systems.

Smooth-kernel results show that time-averaged many-particle convergence can
be obtained despite the absence of fixed-\(N\) ergodicity.  At the population
level, general results for standard smooth scalar kernels give qualitative
last-iterate convergence and quantitative time-averaged or best-iterate KSD
bounds, but no general quantitative rate is known for the last iterate in a
strong topology \citep{KorbaEtAl2020,ChizatColomboColomboFernandezReal2026}.
This contrast motivates kernels with finite-order Fourier decay.

On \(\T^d\), the mean-zero periodic Riesz kernel \(\kappa_a\) is
characterized by
\begin{equation}\label{eq:kappa-fourier}
 \widehat\kappa_a(0)=0,
 \qquad
 \widehat\kappa_a(\ell)=(2\pi|\ell|)^{-2a}
 \quad(\ell\in\mathbb Z^d\setminus\{0\}),
\end{equation}
or equivalently by \((-\Delta)^a\kappa_a=\delta_0-1\).  Its Fourier
multiplier makes the associated Stein discrepancy coercive in a negative
Sobolev norm, while the Riesz force provides short-range repulsion.
\citet{ChizatColomboColomboFernandezReal2026} show that this reduced smoothing
can yield quantitative population last-iterate convergence.  Under their
Sobolev regularity and small-initial-entropy assumptions, for every \(a>1\)
and a regularity index \(\gamma>\max\{d/2,a-1\}\),
\begin{equation}\label{eq:population-polynomial-rate}
 \Ent(\rho_t\mid\pi)
 \le
 \left(
 \Ent(\rho_0\mid\pi)^{-(a-1)/\gamma}+\frac{t}{C}
 \right)^{-\gamma/(a-1)},
\end{equation}
together with corresponding decay in strong Sobolev norms; in particular,
the representative entropy and squared \(L^2\) rates are
\(t^{-\gamma/(a-1)}\).  At the inverse-Laplacian endpoint \(a=1\),
\citet{CarrilloSkrzeczkowskiWarnett2024} establish global exponential
convergence for a class of kernels with quadratic Fourier decay, while
\citet{ChizatColomboColomboFernandezReal2026} recover the corresponding
Coulomb result on the torus.
In a different singular limit,
\citet{CarrilloSkrzeczkowskiWarnett2026} show that concentrating a regular
kernel leads to a local quadratic-mobility flow.

The same finite-order Fourier structure creates the particle-level
obstruction.  Applying the Stein operator in both variables produces a
scalar energy kernel with an infinite diagonal, whereas the singular particle
dynamics excludes self-interaction.  Thus the nonnegative population
dissipation cannot simply be evaluated on an empirical measure.  The issue is
not singularity by itself: reduced smoothing strengthens population
coercivity while simultaneously exposing the diagonal at finite \(N\).

\subsection{The particle system and its Stein energy}

We now take the state space to be the flat torus \(\T^d\) and specialize to
\(k(y,x)=\kappa_a(y-x)\).  The first-order Stein operator is
\begin{equation}\label{eq:A-intro}
 \mathcal A_{\pi,y}k(y,x)
 :=\nabla_y k(y,x)+\bpi(y)k(y,x).
\end{equation}
The Riesz interaction is not smooth on the diagonal, and its
self-interaction is not consistently defined across the range considered
below.  We therefore study the self-interaction-free system
\begin{equation}\label{eq:particle-system}
 \dot X_i
 =
 \frac1N\sum_{j\ne i}\mathcal A_{\pi,X_j}k(X_j,X_i)
 =
 \frac1N\sum_{j\ne i}
\left\{\nabla\kappa_a(X_j-X_i)
       +\kappa_a(X_j-X_i)\bpi(X_j)\right\}.
\end{equation}
Write \(\bm X(t):=(X_1(t),\ldots,X_N(t))\).  For a configuration
\(\bm x=(x_1,\ldots,x_N)\), let
\[
 \mu_{\bm x}^N:=\frac1N\sum_{i=1}^N\delta_{x_i}
\]
be its empirical measure, and let
\[
 \mathcal D_N:=\{\bm x\in(\T^d)^N:x_i\ne x_j\text{ for }i\ne j\}
\]
be the collision-free configuration space.
The first term is the repulsive kernel force; the second transports that
interaction through the target score.  With \(\kappa_a\) defined by
\eqref{eq:kappa-fourier}, we work in the finite-energy regime
\begin{equation}\label{eq:parameter-range-intro}
 1<a<1+\frac d2,
 \qquad
 \sigma:=d+2-2a\in(0,d).
\end{equation}

To connect the particle velocity with entropy dissipation, we use divergence
relative to the target measure.  For a vector field \(u\), this is
\[
 \mathcal S_{\pi,x}u
 :=\frac1{\pi(x)}\nabla_x\cdot\bigl(\pi(x)u\bigr)
 =\nabla_x\cdot u+\bpi(x)\cdot u.
\]
Applying this operator to the velocity generated at \(x\) by a source point
\(y\) defines the scalar Stein kernel
\begin{equation}\label{eq:G-intro}
 G_\pi(x,y):=\mathcal S_{\pi,x}\mathcal A_{\pi,y}k(y,x).
\end{equation}
Thus \(G_\pi(x,y)\) measures the contribution of the interaction between
\(x\) and \(y\) to compression of the flow relative to the target.  For
\(x\ne y\), a direct calculation gives
\begin{equation}\label{eq:G-expansion}
\begin{split}
 G_\pi(x,y)
 ={}&-\Delta\kappa_a(y-x)
 +\bigl(\bpi(x)-\bpi(y)\bigr)\cdot\nabla\kappa_a(y-x)\\
 &+\bpi(x)\cdot\bpi(y)\,\kappa_a(y-x).
\end{split}
\end{equation}

At the population level, the quadratic form generated by \(G_\pi\) is
nonnegative.  Indeed, for a smooth probability measure \(\mu\), 
\begin{equation}\label{eq:F-cont}
\begin{aligned}
 \operatorname{KSD}_{\kappa_a}^2(\mu\mid\pi)
 &:=
 \left\|
  \int_{\T^d}\mathcal A_{\pi,y}k(y,\cdot)\,\dd\mu(y)
 \right\|_{\mathcal H_{\kappa_a}^d}^{2}\\
 &=\iint_{\T^d\times\T^d}G_\pi(x,y)
   \,\dd(\mu-\pi)(x)\,\dd(\mu-\pi)(y)
 =:2\F^\pi(\mu).
\end{aligned}
\end{equation}
We call \(\F^\pi\) the Stein energy.  The space
\(\mathcal H_{\kappa_a}\) is the native Fourier space of \(\kappa_a\), defined
in \eqref{eq:native-space}.  Along the population SVGD flow, this energy is
exactly the entropy dissipation:
\begin{equation}\label{eq:population-dissipation-intro}
 \frac{\dd}{\dd t}\Ent(\rho_t\mid\pi)
 =-\operatorname{KSD}_{\kappa_a}^2(\rho_t\mid\pi)
 =-2\F^\pi(\rho_t).
\end{equation}
Identity \eqref{eq:F-cont} is proved for smooth measures in
\cref{prop:gram}, and the population entropy-dissipation identity
\eqref{eq:population-dissipation-intro} is proved in
\cref{app:population-dissipation}.  Both arguments use Fourier
duality and therefore do not require pointwise reproduction for the singular
kernel.

The role of \(G_\pi\) at the particle level is analogous.  When the Liouville
equation for the joint particle law is tested against relative entropy with
respect to \(\pi^{\otimes N}\), each ordered pair contributes
\(G_\pi(x_i,x_j)\).  The leading term
\(-\Delta\kappa_a(y-x)\) in \eqref{eq:G-expansion} diverges on the diagonal,
so the full empirical Stein energy is undefined.  Since the particle system
omits self-interaction, its entropy production instead involves the
off-diagonal empirical Stein energy
\begin{equation}\label{eq:F-discrete}
 \F_N^\pi(\bm x)
 :=\frac1{2N^2}\sum_{i\ne j}G_\pi(x_i,x_j).
\end{equation}

\subsection{The singular entropy argument}

The smooth-kernel entropy method suggests applying the same normalized
joint-law entropy to the off-diagonal Riesz system.  Let \(P_t^N\) be its
joint particle law and write
\[
 H_N(t):=\frac1N\Ent(P_t^N\mid\pi^{\otimes N}).
\]
Removing self-interaction gives the off-diagonal identity
\begin{equation}\label{eq:entropy-intro}
\begin{aligned}
 \frac{\dd}{\dd t}H_N(t)
 &=-\frac1{N^2}\E_{P_t^N}\!\left[\sum_{i\ne j}
     G_\pi(X_i(t),X_j(t))\right]\\
 &=-2\E_{P_t^N}\!\left[\F_N^\pi(\bm X(t))\right].
\end{aligned}
\end{equation}
The problem is to recover the nonnegativity lost by deleting an infinite
diagonal.

For comparison, modulated-energy methods compare an empirical measure with a
continuum density without evaluating the singular energy on the particle
diagonal.  The framework was developed for
Coulomb-type flows by \citet{Serfaty2020} and extended to translation-invariant
Riesz flows by \citet{NguyenRosenzweigSerfaty2022}, using renormalization and
smearing ideas related to \citet{PetracheSerfaty2017}.  The resulting
corrected modulated energy and commutator estimate give finite-time mean-field
convergence for Riesz flows.  In SVGD, however, the velocity decomposes as
\begin{equation}\label{eq:velocity-intro}
 v_\mu=-\nabla\kappa_a*\mu+\kappa_a*(\bpi\mu).
\end{equation}
The first term has the commutator structure treated by that theory within its
admissible range; the target-dependent second term creates a discrete
empirical trace for which the same estimate is unavailable.  This prevents a
direct application of existing Riesz-flow theory to a last-iterate particle
limit.  The time-averaged argument below avoids such a particle-to-population
comparison.

Our argument combines the joint-law entropy identity from smooth-kernel SVGD
with a direct capped-diagonal passage from particle energies to the continuum
KSD.  Making this work for the target-dependent, self-interaction-free Riesz
system requires three steps:

\begin{enumerate}
\item \emph{Singular entropy identity.}  We first prove global collision
avoidance and then justify the joint entropy calculation for the singular
flow.
\item \emph{Capped-diagonal empirical liminf.}  Capping the full Stein kernel
below its singular diagonal gives a direct lower bound from the off-diagonal
particle energy to the continuum KSD.  This proves that the negative part is
bounded by a correction \(c_N\to0\) and permits simultaneous averaging
horizons.
\item \emph{Quantitative correction below the logarithmic threshold.}  When
\(0<\sigma<2\), an operator factorization and a positive Bessel minorant give
the explicit rate \(c_N\lesssim N^{-1+\sigma/d}\).
\end{enumerate}

The resulting finite-\(N\) renormalized entropy method is the central
contribution: it converts the singular off-diagonal entropy production into a
nonnegative continuum discrepancy up to a vanishing \(N\)-dependent
correction.  Because the argument works directly with the joint particle law,
it does not require a propagation-of-chaos comparison with the population
flow.

\subsection{Main result and proof mechanism}

The exact off-diagonal identity
\eqref{eq:entropy-intro} does not yet provide dissipation because
\(\F_N^\pi\) need not be nonnegative.  Set
\begin{equation}\label{eq:onsager-intro}
 c_N:=\left(-\inf_{\bm x\in\mathcal D_N}\F_N^\pi(\bm x)\right)_+.
\end{equation}
The capped-diagonal liminf proves, throughout \(0<\sigma<d\), that
\(c_N\to0\).  Therefore the renormalized energy
\(\mathcal E_N^\pi:=\F_N^\pi+c_N\) is nonnegative.  Integrating
\eqref{eq:entropy-intro} and using \(H_N(T)\ge0\) gives
\begin{equation}\label{eq:rd-intro}
 \frac1T\int_0^T
 \E_{P_t^N}\!\left[\mathcal E_N^\pi(\bm X(t))\right]\,\dd t
 \le \frac{H_N(0)}{2T}+c_N.
\end{equation}
For \(0<\sigma<2\), the quantitative refinement gives
\begin{equation}\label{eq:quantitative-onsager-intro}
 c_N\le C_{\rm ons}N^{-1+\sigma/d}.
\end{equation}
Thus \eqref{eq:rd-intro} is the singular replacement for the finite-diagonal
correction in the smooth-kernel entropy method, with an explicit rate for
\(0<\sigma<2\).

The entropy identity is therefore naturally suited to time-averaged
convergence.  Let \(P_{t,i}^N\) be the \(i\)-th marginal of \(P_t^N\) and set
\begin{equation}\label{eq:tagged-average-intro}
 \overline P_{\mathrm{av},T}^{N}
 :=\frac1T\int_0^T\frac1N\sum_{i=1}^N P_{t,i}^N\,\dd t.
\end{equation}
Writing \(\dBL\) for the bounded-Lipschitz distance, we prove that whenever
\(T_N\to\infty\) and \(H_N(0)/T_N\to0\), the time-averaged empirical-measure
law converges to \(\delta_\pi\), the Dirac mass at the point
\(\pi\in\Pp(\T^d)\):
\begin{equation}\label{eq:ordered-result-intro}
 \frac1{T_N}\int_0^{T_N}
 (\bm x\mapsto\mu_{\bm x}^N)_\#P_t^N\,\dd t
 \rightharpoonup\delta_\pi,
 \qquad
 \dBL\!\left(\overline P_{\mathrm{av},T_N}^{N},\pi\right)\to0.
\end{equation}
The first convergence therefore takes place in \(\Pp(\Pp(\T^d))\).
In particular, every sequence \(T_N\to\infty\) is admissible under a uniform
initial normalized-entropy bound.  No exchangeability is required.  We also
prove that invariant laws of finite relative entropy have empirical-measure
laws converging to \(\delta_\pi\), without any uniform entropy bound.
Exchangeability is needed only to conclude that their first marginals converge
to \(\pi\).

The proof of \eqref{eq:ordered-result-intro} has four conceptual steps.
First, the Riesz repulsion prevents collisions, so the singular flow and its
Liouville equation are globally defined.  Second, the joint-entropy identity
gives an upper bound on the averaged off-diagonal energy.  Third, a bounded
continuous cap of \(G_\pi\) gives a lower bound for every sequence of
empirical-measure laws, including laws averaged over \(T_N\).  Finally,
compactness and the coercive identity \(\F^\pi(\mu)=0\) if and only if
\(\mu=\pi\) identify the limit.  The Bessel-minorant argument is an additional
quantitative refinement and is not needed for \eqref{eq:ordered-result-intro}.

\section{Setting and theorem statements}\label{sec:setting}

\subsection{The torus, the target, and the functional setting}

We identify \(\T^d\) with \([-\tfrac12,\tfrac12)^d\) and use normalized
Lebesgue measure.  Fourier coefficients are
\[
 \widehat f(\ell)=\int_{\T^d}f(x)e^{-2\pi i\ell\cdot x}\,\dd x.
\]
We use the bounded-Lipschitz metric
\[
 \dBL(\mu,\nu)
 :=
 \sup_{\|f\|_\infty+\operatorname{Lip}(f)\le1}
 \left|\int_{\T^d}f\,\dd(\mu-\nu)\right|,
\]
which metrizes weak convergence on \(\Pp(\T^d)\).

\begin{assumption}[Target]\label{ass:target}
The density \(\pi=Z^{-1}e^{-V}\) is strictly positive.  For some
\[
 m>\frac d2+2,
 \qquad m\ge a+1,
\]
we have \(\pi,V\in H^m(\T^d)\) and \(V\in W^{2,\infty}(\T^d)\).
\end{assumption}

The Sobolev assumptions in \cref{ass:target} are used in the continuum
identification of the Riesz KSD with a negative Sobolev norm and in the
operator estimates for \(0<\sigma<2\) in
\cref{thm:quantitative-onsager}.  Collision avoidance and the entropy
calculation use only the bounded derivatives of the target score displayed in
their proofs.

The Riesz kernel \(\kappa_a\) was defined in \eqref{eq:kappa-fourier}.  Its
native space is the mean-zero Fourier space
\begin{equation}\label{eq:native-space}
 \mathcal H_{\kappa_a}
 :=\left\{f:\widehat f(0)=0,\ 
 \|f\|_{\mathcal H_{\kappa_a}}^2
 :=\sum_{\ell\ne0}\widehat\kappa_a(\ell)^{-1}
 |\widehat f(\ell)|^2<\infty\right\}
 =\dot H^a_0(\T^d).
\end{equation}
If \(a>d/2\), point evaluation is continuous and this is the usual
mean-zero reproducing-kernel Hilbert space generated by \(\kappa_a\).  The
range considered here may also contain \(a\le d/2\), where point evaluation
on \(\dot H^a\) is not continuous and \(\kappa_a\) is singular on the
diagonal.  The literal pointwise reproducing property is then unavailable.
We therefore use \eqref{eq:native-space} through Fourier duality; no point
evaluation of the singular kernel is needed.

The main finite-particle theorems assume
\eqref{eq:parameter-range-intro}.  In this range the kernel is smooth away
from the origin, its force is less singular than the Coulomb force
\((a=1,\sigma=d)\), and the
scalar kernel \(G_\pi\) has locally integrable singularity
\(\dist(x,y)^{-\sigma}\).

The identity \eqref{eq:F-cont} initially defines \(\F^\pi\) for smooth
measures.  Later we apply it to weak limits of empirical measures, which need
not have densities.  We therefore use \(\F^\pi\) for the
lower-semicontinuous relaxation of the smooth Gram energy to
\(\Pp(\T^d)\), allowing the value \(+\infty\).  The diagonal-cap
construction in \cref{sec:liminf} gives a concrete representation of this
relaxation.

\subsection{Entropy dissipation and long-time sampling}

Recall from the introduction that \(\mathcal D_N\) denotes the collision-free
configuration space on which the particle vector field is defined.

The first theorem establishes the analytic foundation for the entropy method.
Its three conclusions have distinct roles: collision avoidance makes the
singular flow global; the Liouville calculation identifies its exact entropy
production; and a qualitative correction turns that sign-indefinite
off-diagonal production into a nonnegative energy up to a vanishing
finite-\(N\) error.

\begin{theorem}[Global dynamics and renormalized entropy]\label{thm:entropy}
Suppose \cref{ass:target} and \eqref{eq:parameter-range-intro} hold.

\begin{enumerate}
\item Every solution of \eqref{eq:particle-system} starting in
\(\mathcal D_N\) exists globally and remains in \(\mathcal D_N\).
\item Let \(P_0^N\ll\pi^{\otimes N}\) have finite relative entropy, and let
\(P_t^N\) be its pushforward by the flow.  Then, for every \(t\ge0\),
\begin{equation}\label{eq:entropy-identity}
 H_N(t)+2\int_0^t\E_{P_s^N}\F_N^\pi\,\dd s=H_N(0),
 \qquad
 H_N(t):=\frac1N\Ent(P_t^N\mid\pi^{\otimes N}).
\end{equation}
\item If
\begin{equation}\label{eq:qualitative-correction}
 m_N:=\inf_{\bm x\in\mathcal D_N}\F_N^\pi(\bm x),
 \qquad c_N:=(-m_N)_+,
 \qquad \mathcal E_N^\pi:=\F_N^\pi+c_N,
\end{equation}
then \(c_N\to0\), \(\mathcal E_N^\pi\ge0\) on \(\mathcal D_N\), and
\begin{equation}\label{eq:renormalized-dissipation}
 \frac1T\int_0^T\E_{P_t^N}\mathcal E_N^\pi\,\dd t
 \le \frac{H_N(0)}{2T}+c_N
 \qquad(T>0).
\end{equation}
\end{enumerate}
\end{theorem}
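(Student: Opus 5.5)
We will prove the three parts in order, since each relies on the previous one.

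\emph{Part 1 (collision avoidance).} Away from collisions the vector field in \eqref{eq:particle-system} is smooth, so local existence and uniqueness in \(\mathcal D_N\) are standard. The only way a solution can fail to be global is for \(\min_{i\ne j}\dist(X_i,X_j)\to0\) in finite time.

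To exclude this, we use a Lyapunov functional. The natural candidate is the pair interaction energy
\[
W(\bm x):=\frac1{N^2}\sum_{i\ne j}\kappa_a(x_i-x_j)\qquad\text{(or }\textstyle\sum_{i\ne j}\kappa_a(x_i-x_j)\bm 1_{\dist<r_0}\text{)}.
\]
Since \(\sigma<d\) forces \(2a>2\), the kernel behaves like \(\kappa_a(z)\sim c\,|z|^{2a-d}\) near the origin, with a logarithm when \(2a=d\). This blows up only when \(2a\le d\). In general we therefore instead use the truncated Riesz potential
\[
\Phi(\bm x)=\sum_{i\ne j}\phi(x_i-x_j),\qquad \phi(z)\sim|z|^{-s},
\]
with a small \(s>0\), or \(\phi=-\log|z|\).

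Differentiating \(\Phi\) along the flow, the repulsive term \(\nabla\kappa_a(X_j-X_i)\) pushes \(X_i\) away from \(X_j\). In the pair difference \(\dot X_i-\dot X_j\), the contribution of the pair \((i,j)\) itself is
\[
-\tfrac2N\nabla\kappa_a(X_i-X_j)+\tfrac1N\kappa_a(X_j-X_i)\bigl(\bpi(X_j)-\bpi(X_i)\bigr).
\]
The first piece is repulsive of size \(|z|^{2a-d-1}\). The second is at most \(\|\nabla\bpi\|_\infty|z|\kappa_a(z)\), which is smaller, using \(V\in W^{2,\infty}\).

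The third-party contributions are the hard part. A force from a distant \(X_k\) is bounded, but when \(X_k\) is also close it can be singular. We handle this by a clustering argument: the leading singular forces of a near-collision cluster are all repulsive in the sense of \(\frac{d}{dt}\sum\phi\), because \(\nabla\phi(z)\cdot\nabla\kappa_a(z)\le0\) up to bounded errors. Combining the leading singular parts via the radial symmetry \(\nabla\kappa_a(z)=-g(|z|)z\) with \(g>0\) near 0, the sum \(\sum_{i\ne j}\nabla\phi(x_i-x_j)\cdot(\text{repulsion})\) is a sum of nonnegative terms plus errors controlled by \(C(1+\Phi)\). Grönwall then bounds \(\Phi\) on finite intervals. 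We expect this third-body bookkeeping to be the main technical obstacle in Part 1; if the naive sign argument fails, we fall back on taking \(\phi=\kappa_a\) plus a large multiple of \(-\log\), using that the velocity is the gradient of \(\kappa_a\)-energy up to bounded score terms.

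\emph{Part 2 (entropy identity).} By Part 1 the flow \(\Psi_t\) is a global diffeomorphism of \(\mathcal D_N\), which has full \(\pi^{\otimes N}\)-measure. Write \(f_t=dP_t^N/d\pi^{\otimes N}\). By Liouville, \(f_t(\Psi_t\bm x)=f_0(\bm x)\exp\bigl(-\int_0^t\mathcal S_{\pi^{\otimes N}}v(\Psi_s\bm x)\,ds\bigr)\), where \(\mathcal S\) denotes divergence relative to \(\pi^{\otimes N}\). Hence
\[
\Ent(P_t^N\mid\pi^{\otimes N})=\Ent(P_0^N\mid\pi^{\otimes N})-\int_0^t\E_{P_s^N}\bigl[\mathcal S_{\pi^{\otimes N}}v\bigr]\,ds .
\]
For this we need \(\mathcal S_{\pi^{\otimes N}}v\in L^1(P_s^N)\), which is exactly Fubini's condition. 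Computing directly,
\[
\mathcal S_{\pi^{\otimes N}}v=\sum_i\mathcal S_{\pi,x_i}v_i=\frac1N\sum_{i\ne j}G_\pi(x_i,x_j)=2N\,\F_N^\pi,
\]
which gives \eqref{eq:entropy-identity} after dividing by \(N\).

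The integrability of \(|G_\pi|\lesssim\dist^{-\sigma}\) under \(P_s^N\) is not automatic, because \(P_s^N\) is not bounded by \(\pi^{\otimes N}\). We will justify it by regularization. We mollify the kernel to \(\kappa_a^\varepsilon\), for which the identity is classical, and pass \(\varepsilon\to0\). Since the regularized flows converge on compact subsets of \(\mathcal D_N\), we truncate with cutoffs \(\chi(\Phi\le M)\), using Part 1's bound on \(\Phi\), and close by dominated convergence using \(\sigma<d\).

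\emph{Part 3 (renormalization).} Nonnegativity of \(\mathcal E_N^\pi\) is immediate from the definition of \(c_N\). Inequality \eqref{eq:renormalized-dissipation} follows from \eqref{eq:entropy-identity} with \(H_N(T)\ge0\): the identity gives \(2\int_0^T\E\F_N^\pi\le H_N(0)\); we add \(2Tc_N\) to both sides and divide by \(2T\).

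It remains to show \(c_N\to0\), and we argue by contradiction. Suppose there are configurations \(\bm x^N\) with \(\F_N^\pi(\bm x^N)\le-\eta<0\) along a subsequence. Fix a cap \(G_\pi^{M}=\min(G_\pi,M)\); because the singular part \(-\Delta\kappa_a\sim c\,\dist^{-\sigma}\) with \(c>0\), we have \(G_\pi^M\le G_\pi\) off the diagonal, up to a bounded correction near the diagonal that vanishes as \(M\to\infty\). Then
\[
\F_N^\pi(\bm x^N)\ge\frac12\iint G_\pi^M\,d\mu^N d\mu^N-\frac{M}{2N}-o_M(1).
\]
Passing to a weak limit \(\mu^N\to\mu\), with \(G_\pi^M\) bounded and continuous, and then letting \(M\to\infty\), yields \(\liminf\F_N^\pi(\bm x^N)\ge\F^\pi(\mu)\ge0\) by monotone convergence. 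This identifies the relaxation \(\F^\pi\) and contradicts \(\F_N^\pi(\bm x^N)\le-\eta\).

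Here we use that the integral of \(G_\pi\) against \((\mu-\pi)^{\otimes2}\) equals its integral against \(\mu^{\otimes2}\). This holds because \(\int G_\pi(x,y)\pi(y)\,dy=0\), which follows from \(\mathcal S_\pi\) being the adjoint of \(-\nabla\) in \(L^2(\pi)\).
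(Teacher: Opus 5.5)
Your Part 1 does not close, and the difficulty is structural rather than bookkeeping. Once the test potential $\phi$ differs from $\kappa_a$, the flow is no longer a gradient flow for $\Phi$. Write $U_i:=\sum_{j\ne i}\nabla\phi(X_i-X_j)$ and $\tilde U_i:=\sum_{k\ne i}\nabla\kappa_a(X_i-X_k)$. The repulsive part of $\frac{\dd}{\dd t}\Phi$ is then $-\frac2N\sum_i U_i\cdot\tilde U_i$. Inside a cluster of a single scale, its three-body terms $\nabla\phi(X_i-X_j)\cdot\nabla\kappa_a(X_i-X_k)$, $k\ne j$, are of the same order as the two-body terms. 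Because $\phi$ and $\kappa_a$ have different radial weights, $U_i$ and $\tilde U_i$ need not be aligned, and individual products $U_i\cdot\tilde U_i$ can be negative. Nothing in your argument shows that the two-body part dominates.

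The errors are also not $O(1+\Phi)$. Take a third particle at distance $D$ from a pair at distance $\epsilon\ll D$. It contributes $\frac1N\nabla\phi(X_i-X_j)\cdot\bigl[\nabla\kappa_a(X_k-X_i)-\nabla\kappa_a(X_k-X_j)\bigr]$, which is of order $\epsilon^{-s}D^{-\sigma}$ with your exponent $s$. This term is compressive in collinear configurations when $\sigma>1$, and it is unbounded relative to $\Phi\approx\epsilon^{-s}$ as $D\to0$. It could only be absorbed into the pair dissipation $\epsilon^{-s-\sigma}$ by a genuine multiscale argument, which you do not supply. (A sign slip as well: $\nabla\phi\cdot\nabla\kappa_a\ge0$ for two radially decreasing profiles; it is $-\frac2N\nabla\phi\cdot\nabla\kappa_a$ that has the good sign.)

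The fallback does not rescue this either. The score part $\bm C^N$ is not bounded when $\sigma\ge2$: it is of size $|\log r|$ or $r^{2-\sigma}$, with $r$ the minimal pair distance. So with $\phi=\kappa_a$ alone, Young's inequality only gives $\frac{\dd}{\dd t}\mathcal W_N\le\frac N4|\bm C^N|^2\lesssim(1+\mathcal W_N)^2$, which does not exclude finite-time blow-up. Adding $-\lambda\log$ reintroduces the same unsigned three-body terms, now in the cross term $\nabla\bigl(\sum_{i\ne j}\log|x_i-x_j|\bigr)\cdot\nabla\mathcal W_N$. For $\sigma<2$ that term is not dominated by the dissipation $\frac1N|\nabla\mathcal W_N|^2$. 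Since Parts 2 and 3 presuppose a global flow on $\mathcal D_N$, this gap propagates.

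The missing idea is to use test quantities for which the internal forces symmetrize into pair terms with a sign.
\begin{itemize}
\item For $0<\sigma<2$, the paper takes the moment of inertia $R_I^2$ of a collapsing cluster $I$. The cluster is chosen maximal under inclusion, so it stays separated from the other particles, and one picks running-minimum times with $\frac{\dd}{\dd t}R_I^2\le0$. The virial identity turns the internal Riesz forces into $\frac1N\sum_{i<j}-(X_j-X_i)\cdot\nabla\kappa_a(X_j-X_i)\ge c(R_I^2)^{1-\sigma/2}$. The score-weighted internal force is lower order after subtracting $(|I|-1)\kappa_a(0)\bpi(\bar X_I)$, and exterior forces are Lipschitz across the cluster. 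This gives $\frac{\dd}{\dd t}R_I^2>0$, a contradiction.
\item For $2\le\sigma<d$, the paper keeps $\mathcal W_N$. By the same virial argument on the closest-scale cluster it proves $|\nabla\mathcal W_N|\ge c\,r^{1-\sigma}$. Hence $|\bm C^N|/|\nabla\mathcal W_N|\to0$, and $\mathcal W_N$ is nonincreasing near the collision set.
\end{itemize}

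Your Parts 2 and 3 otherwise follow the paper's route, with two soft spots.
\begin{itemize}
\item In Part 2, dominated convergence ``using $\sigma<d$'' presupposes the integrability under $P_s^N$ that you yourself flag as not automatic. The clean fix is that $G_\pi\ge-C_G$ off the diagonal, so Tonelli applies to $\F_N^\pi+C_G/2$ in your characteristic formula, as in the paper's localization to trajectory tubes. Also, your flow map $\Psi_t$ is a diffeomorphism only onto its image, since backward trajectories may collide.
\item In Part 3, $\min\{G_\pi,M\}\le G_\pi$ needs no correction term. However, the claim $\lim_M\frac12\iint\min\{G_\pi,M\}\,\dd\mu\,\dd\mu\ge0$ for a non-smooth weak limit $\mu$ requires more than centering, because the Gram identity is proved only for smooth measures. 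The paper obtains it from finiteness of the Riesz $\sigma$-energy and heat-kernel regularization.
\end{itemize}
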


For a smooth kernel, the analogue of the third conclusion follows by adding
the finite diagonal of the Gram matrix.  Here that diagonal is infinite.
The correction in \eqref{eq:qualitative-correction} is shown to vanish by the
direct capped-diagonal liminf of \cref{prop:empirical-liminf}; no smearing
estimate is needed for this qualitative conclusion.

Below the logarithmic threshold \(\sigma=2\), the correction has an explicit
natural-scale bound.

\begin{theorem}[Quantitative Onsager bound for \(0<\sigma<2\)]
\label{thm:quantitative-onsager}
Under the assumptions of \cref{thm:entropy}, assume in addition that
\(0<\sigma<2\).  There is \(C_{\rm ons}<\infty\), depending only on
\(d,a,\pi\), such that
\begin{equation}\label{eq:onsager}
 \F_N^\pi(\bm x)\ge-C_{\rm ons}N^{-1+\sigma/d}
 \qquad(N\ge2,\ \bm x\in\mathcal D_N).
\end{equation}
Consequently,
\begin{equation}\label{eq:quantitative-correction}
 c_N\le C_{\rm ons}N^{-1+\sigma/d}.
\end{equation}
Moreover, for the particle laws in \cref{thm:entropy},
\begin{equation}\label{eq:quantitative-dissipation}
 \frac1T\int_0^T\E_{P_t^N}\!\left[
  \F_N^\pi+C_{\rm ons}N^{-1+\sigma/d}\right]\,\dd t
 \le \frac{H_N(0)}{2T}+C_{\rm ons}N^{-1+\sigma/d}.
\end{equation}
\end{theorem}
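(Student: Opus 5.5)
The plan is to prove the pointwise lower bound \eqref{eq:onsager} by a smearing argument adapted to the operator structure of $G_\pi$; the other two conclusions then follow at once. Indeed, \eqref{eq:onsager} gives $m_N\ge -C_{\rm ons}N^{-1+\sigma/d}$, hence \eqref{eq:quantitative-correction}. Next, $\F_N^\pi+C_{\rm ons}N^{-1+\sigma/d}\ge 0$ on $\mathcal D_N$. Integrating the identity \eqref{eq:entropy-identity} of \cref{thm:entropy}, dividing by $2T$ and using $H_N(T)\ge0$ yields \eqref{eq:quantitative-dissipation}, exactly as in the derivation of \eqref{eq:renormalized-dissipation}. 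The only substantive claim is therefore \eqref{eq:onsager}.

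\textbf{Step 1: factorization.} I would write $G_\pi(x,y)=\mathcal S_{\pi,x}\mathcal A_{\pi,y}\kappa_a(y-x)$ as the kernel of $\mathcal S_\pi (-\Delta)^{-a}\mathcal S_\pi^{*}$, up to the mean-zero projection. Then, for any signed measure $\nu$, the full energy is $\iint G_\pi\,\dd\nu\,\dd\nu=\|(-\Delta)^{-a/2}\mathcal S_\pi^*\nu\|^2\ge0$, computed in Fourier as in \cref{prop:gram}.

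I would not use a plain mollification of $\kappa_a$. Instead I would choose a minorant kernel $\kappa_a^\eta\le$-type comparison at scale $\eta$ of the following form. Take a positive Bessel-type kernel $B_\eta$, with Fourier multiplier $(2\pi|\ell|)^{-2a}\,\phi(\eta\ell)$ and $0\le\phi\le1$ positive definite, such that the truncated Stein kernel $G_\pi^\eta$ (built from $\kappa_a^\eta$) satisfies:
\begin{enumerate}
\item the Gram form is nonnegative: $\sum_{i,j}G^\eta_\pi(x_i,x_j)\ge0$, including the diagonal;
\item the diagonal is bounded: $G^\eta_\pi(x,x)\le C\eta^{-\sigma}$, since $-\Delta\kappa_a^\eta(0)\sim\sum_{|\ell|\lesssim\eta^{-1}}|\ell|^{2-2a}\sim\eta^{-(d+2-2a)}$;
\item the off-diagonal error has a sign up to a small remainder: $G_\pi(x,y)\ge G_\pi^\eta(x,y)-R_\eta(x,y)$ for $x\ne y$.
\end{enumerate}

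\textbf{Step 2: combining.} With these three properties,
\[
2N^2\F_N^\pi\ge\sum_{i,j}G^\eta_\pi-\sum_iG^\eta_\pi(x_i,x_i)-\sum_{i\ne j}R_\eta\ge -CN\eta^{-\sigma}-\sum_{i\ne j}R_\eta(x_i,x_j).
\]
Choosing $\eta=N^{-1/d}$ gives the diagonal contribution $N^{-1}\eta^{-\sigma}=N^{-1+\sigma/d}$, as required.

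\textbf{Main obstacle: controlling $R_\eta$ uniformly over configurations.} The difference $G_\pi-G_\pi^\eta$ contains three pieces:
\begin{itemize}
\item the positive leading piece $-\Delta(\kappa_a-\kappa_a^\eta)$, which I want to be pointwise nonnegative away from the diagonal; this is what the positive Bessel minorant should provide, via a subordination/heat-kernel representation making $(-\Delta)^{1-a}(\text{id}-\phi(\eta D))$ have a positive kernel;
\item the drift cross term $(\bpi(x)-\bpi(y))\cdot\nabla(\kappa_a-\kappa_a^\eta)$;
\item the term $\bpi\cdot\bpi(\kappa_a-\kappa_a^\eta)$.
\end{itemize}
Here $\sigma<2$ is used. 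The cross term is bounded by $\|\nabla\bpi\|_\infty|x-y|\,|\nabla(\kappa_a-\kappa_a^\eta)|\lesssim|x-y|^{-\sigma+2}$ localized to $|x-y|\lesssim\eta$. The last term is $\lesssim|x-y|^{2-\sigma}$ (or $\eta^{2-\sigma}$) on the same scale. Both are bounded functions when $\sigma<2$, of size $O(\eta^{2-\sigma})$ and supported effectively at distance $\lesssim\eta$ (with tails handled by the smooth part). The crude bound $\sum_{i\ne j}R_\eta\le N^2\eta^{2-\sigma}$ therefore contributes $\eta^{2-\sigma}=N^{-(2-\sigma)/d}$ to $\F_N^\pi$. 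This is weaker than the target $N^{-1+\sigma/d}$ when $d>2$, so I would instead absorb $R_\eta$ into the positive leading difference: $-\Delta(\kappa_a-\kappa_a^\eta)\gtrsim|x-y|^{-\sigma}$ for $|x-y|\le c\eta$ dominates $C|x-y|^{2-\sigma}$ there. The remaining smooth tail would be controlled through the operator factorization $\mathcal S_\pi\,\cdot\,\mathcal S_\pi^*$ as a commutator of bounded order, using $V\in H^m$ with $m\ge a+1$.

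Making this absorption rigorous, namely the sign-definite Bessel minorant with a commutator remainder of order $\eta^{-\sigma}/N$, is the step I expect to be hardest.
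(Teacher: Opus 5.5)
Your reductions of \eqref{eq:quantitative-correction} and \eqref{eq:quantitative-dissipation} to \eqref{eq:onsager} are correct. Your skeleton is also the one the paper uses: a positive semidefinite part whose diagonal is $O(\eta^{-\sigma})$ by Fourier summation (using $\sigma<2$), a singular part bounded below by $O(\eta^{s})$ per pair with $s=d-\sigma$, and $\eta=N^{-1/d}$.

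\textbf{The gap.} The gap is the step you flag, and the heuristic you offer for it is false. You make positive semidefiniteness automatic by taking $G_\pi^\eta$ to be the Stein kernel of a smoothed $\kappa_a^\eta$. As a result, the drift terms of the Stein kernel of $h:=\kappa_a-\kappa_a^\eta$ land in the singular remainder and must be bounded pointwise. You assert they are effectively supported at distance $\lesssim\eta$; they are not.

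\begin{itemize}
\item The function $-\Delta h$ has zero mean. Since $\Delta\kappa_a^\eta=O(\eta^{-\sigma})$ (your property 2), it is $\gtrsim|z|^{-\sigma}$ for $|z|\le c\eta$, so it carries mass $\gtrsim\eta^{s}$ there. As your property 3 requires, it is $\ge-C\eta^{s}$ everywhere.
\item Hence the compensating negative charge is spread over the whole torus. By Gauss's law, $|\nabla h|\gtrsim\eta^{s}\rho^{1-d}$ on average over spheres $|z|=\rho$ with $\eta\ll\rho\ll1$.
\item For the Bessel choice $\phi=1-\chi_M$ this is explicit: $-\Delta h=J_M-M^{-s}$ and $\widehat h(\ell)=\lambda_\ell^{-2}(M^2+\lambda_\ell^2)^{-s/2}\approx M^{-s}\lambda_\ell^{-2}$ for $\lambda_\ell\ll M$. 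This is a Coulomb tail.
\item The cross term $(\bpi(x)-\bpi(y))\cdot\nabla h(y-x)$ is therefore a sign-indefinite kernel of size $\eta^{s}|x-y|^{2-d}$ at intermediate distances. To leading order its sign is that of $-(y-x)^{\top}\nabla^2V(x)(y-x)$.
\item Meanwhile $J_M$ decays exponentially once $|x-y|\gg\eta\log(1/\eta)$.
\end{itemize}

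So pointwise absorption into the positive leading term works only inside that range. The crude bound on the remaining pairs is again $\eta^{2-\sigma}$ up to logarithms, which is exactly the rate you wanted to beat for $d\ge3$. Closing your route would need a genuinely non-pointwise argument. The appeal to ``a commutator of bounded order'' does not provide one.

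\textbf{What the paper does instead.} The paper puts the target dependence on the other side of the split.
\begin{itemize}
\item Its singular part is the translation-invariant centered Bessel kernel $\overline J_M^\pi$. This is $\ge-2\|\pi\|_\infty M^{-s}$ pointwise and has no drift terms at all.
\item Everything else goes into $K_M^\pi=G_\pi-\overline J_M^\pi$. In your notation with $\phi=1-\chi_M$, $K_M^\pi$ is your $G_\pi^\eta$ plus precisely the drift terms $(\bpi(x)-\bpi(y))\cdot\nabla h(y-x)+\bpi(x)\cdot\bpi(y)h(y-x)$, plus bounded centering constants.
\item Continuity and the $O(M^\sigma)$ diagonal of $K_M^\pi$ are elementary for $\sigma<2$. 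Its positive semidefiniteness is the real content: it is the operator inequality $T_\pi\ge J_M*$ of \cref{prop:Bessel-minorant}.
\end{itemize}

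That inequality uses four ingredients:
\begin{enumerate}
\item the normalization $\mathsf L^{s/2}T_\pi\mathsf L^{s/2}=I+\mathcal K$;
\item the two-derivative gain $\mathcal K:H^{-1}\to H^1$, which rests on the exact cancellation \eqref{eq:order-one-cancellation} of the order $-1$ terms together with a Kato--Ponce commutator estimate;
\item the strict positivity $\mathcal B\ge\beta I$, obtained from injectivity and compactness;
\item a low/high frequency split.
\end{enumerate}

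The two-order gain is indispensable. At frequencies $\lambda_\ell\gg M$, the room left by $I-\mathcal J_M$ is only of size $M^2/\lambda_\ell^2$. A remainder gaining one order could not be absorbed there, let alone a bounded-order commutator. This operator-level absorption of the drift terms into the positive semidefinite part is the idea your proposal is missing.
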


The next theorem converts the averaged energy bound into concentration of the
time-averaged empirical-measure law at the target, and hence convergence of
its label-averaged marginal.

\begin{theorem}[Simultaneous many-particle and long-time limit]
\label{thm:ordered}
Under the assumptions of \cref{thm:entropy}, let \(T_N\to\infty\) and assume
\begin{equation}\label{eq:entropy-time-condition}
 \frac{H_N(0)}{T_N}\longrightarrow0.
\end{equation}
Define the time-averaged empirical-measure law
\begin{equation}\label{eq:Lambda-simultaneous}
 \Lambda_{N,T_N}
 :=\frac1{T_N}\int_0^{T_N}
 (\bm x\mapsto\mu_{\bm x}^N)_\#P_t^N\,\dd t.
\end{equation}
Then
\begin{equation}\label{eq:ordered-main}
 \Lambda_{N,T_N}\rightharpoonup\delta_\pi,
 \qquad
 \dBL\!\left(\overline P_{\mathrm{av},T_N}^{N},\pi\right)
 \longrightarrow0.
\end{equation}
In particular, \eqref{eq:ordered-main} holds for every \(T_N\to\infty\) if
\(\sup_NH_N(0)<\infty\).
\end{theorem}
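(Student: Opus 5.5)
We combine the renormalized dissipation bound \eqref{eq:renormalized-dissipation} of \cref{thm:entropy} with a compactness/lower-semicontinuity argument on \(\Pp(\Pp(\T^d))\).

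Since \(\Pp(\T^d)\) is compact in the weak topology, so is \(\Pp(\Pp(\T^d))\). Hence the sequence \(\Lambda_{N,T_N}\) is tight, and it suffices to show that every subsequential limit \(\Lambda\) equals \(\delta_\pi\). Applying \eqref{eq:renormalized-dissipation} with \(T=T_N\) and using \(\mathcal E_N^\pi=\F_N^\pi+c_N\), we get
\[
 \frac1{T_N}\int_0^{T_N}\E_{P_t^N}\F_N^\pi\,\dd t\le\frac{H_N(0)}{2T_N}+c_N\longrightarrow0,
\]
by \eqref{eq:entropy-time-condition} and \(c_N\to0\). The left side equals \(\int \F_N^\pi\,\dd\widetilde\Lambda_N\), where \(\widetilde\Lambda_N:=T_N^{-1}\int_0^{T_N}P_t^N\,\dd t\) is the time-averaged law on \(\mathcal D_N\). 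This is a genuine probability measure on \(\mathcal D_N\) by part 1 of \cref{thm:entropy}, and its pushforward under \(\bm x\mapsto\mu_{\bm x}^N\) is \(\Lambda_{N,T_N}\).

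The central step is the capped-diagonal liminf, \cref{prop:empirical-liminf}, which I take to assert the following. For any sequence of laws \(Q_N\) on \(\mathcal D_N\) whose empirical-measure pushforwards converge weakly to some \(\Lambda\),
\[
 \liminf_N\int\F_N^\pi\,\dd Q_N\ge\int\F^\pi(\mu)\,\dd\Lambda(\mu).
\]
If the proposition is stated only for deterministic configurations or with a different normalization, I would rederive it as follows. Pick a bounded continuous cap \(G^{(M)}_\pi\le G_\pi\) off the diagonal, up to an error that is uniform in the configuration. Then
\[
 \F_N^\pi(\bm x)\ge\tfrac12\iint G^{(M)}_\pi\,\dd(\mu_{\bm x}^N-\pi)^{\otimes2}-\frac{\|G^{(M)}_\pi\|_\infty}{2N}-\text{(cap error)}.
\]
The right side is a bounded continuous functional of \(\mu_{\bm x}^N\), so it passes to the limit under \(\Lambda_{N,T_N}\rightharpoonup\Lambda\). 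Letting \(M\to\infty\) and using monotone convergence, together with the fact that the capped energies increase to the relaxed \(\F^\pi\), gives the claimed inequality. I expect this to be the main obstacle. The difficulty is controlling the negative part of \(G_\pi\) near the diagonal, namely the \(\nabla\kappa_a\) and \(\kappa_a\) terms of size \(\dist^{1-\sigma}\) and smaller, relative to the positive leading \(-\Delta\kappa_a\sim\dist^{-\sigma}\). That comparison is what makes a one-sided cap possible, and the proposition is designed to supply exactly this.

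It follows that \(\int\F^\pi\,\dd\Lambda\le0\). Since \(\F^\pi\ge0\), being a relaxation of a nonnegative squared KSD, we get \(\F^\pi(\mu)=0\) for \(\Lambda\)-a.e.\ \(\mu\). The coercivity statement, namely \(\F^\pi(\mu)=0\) if and only if \(\mu=\pi\), which uses the negative-Sobolev identification under \cref{ass:target}, then gives \(\Lambda=\delta_\pi\). Uniqueness of the limit yields \(\Lambda_{N,T_N}\rightharpoonup\delta_\pi\).

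For the marginal statement, observe that for any \(f\) with \(\|f\|_\infty+\operatorname{Lip}(f)\le1\),
\[
 \int f\,\dd\overline P^N_{\mathrm{av},T_N}=\int\Bigl(\int f\,\dd\mu\Bigr)\dd\Lambda_{N,T_N}(\mu).
\]
Hence \(\dBL(\overline P^N_{\mathrm{av},T_N},\pi)\le\int\dBL(\mu,\pi)\,\dd\Lambda_{N,T_N}(\mu)\). The integrand \(\mu\mapsto\dBL(\mu,\pi)\) is bounded and continuous on \(\Pp(\T^d)\), so this tends to \(\int\dBL(\mu,\pi)\,\dd\delta_\pi=0\). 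Finally, if \(\sup_NH_N(0)<\infty\), then \eqref{eq:entropy-time-condition} holds automatically for every \(T_N\to\infty\).
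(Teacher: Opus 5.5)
Your proof is correct and follows the paper's argument essentially step by step: compactness of \(\Pp(\Pp(\T^d))\), then \cref{prop:empirical-liminf} applied to the time-averaged law \(T_N^{-1}\int_0^{T_N}P_t^N\,\dd t\), then coercivity from \cref{prop:identification}, and finally continuity of the barycenter, of which your \(\dBL\) bound is an explicit form. The only difference is cosmetic: the paper bounds the averaged off-diagonal energy by \(H_N(0)/(2T_N)\) directly from the entropy identity and \(H_N(T_N)\ge0\), so your detour through \eqref{eq:renormalized-dissipation} and \(c_N\to0\) is unnecessary, though harmless since \(c_N\ge0\).
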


The theorem concerns the full law of the empirical measure and therefore also
the label-averaged marginal.  It does not require the joint law to be
exchangeable.  Condition \eqref{eq:entropy-time-condition} shows that a
uniform entropy bound is sufficient but not necessary.

At stationarity, the entropy identity forces the mean off-diagonal
dissipation to vanish.  Passing this fact to the continuum yields the
following consequence.

\begin{corollary}[No macroscopic spurious stationary law]\label{cor:stationary}
For each \(N\), let \(Q^N\ll\pi^{\otimes N}\) be invariant under the flow
\eqref{eq:particle-system} and have finite relative entropy.  Then
\begin{equation}\label{eq:stationary-empirical-law}
 (\bm x\mapsto\mu_{\bm x}^N)_\#Q^N\rightharpoonup\delta_\pi,
 \qquad
 \frac1N\sum_{i=1}^NQ_i^N\rightharpoonup\pi.
\end{equation}
Here \(Q_i^N\) denotes the \(i\)-th marginal of \(Q^N\).  If the laws are
exchangeable, then \(Q_1^N\rightharpoonup\pi\).
\end{corollary}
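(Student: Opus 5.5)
We would prove \cref{cor:stationary} by feeding the invariant laws into the machinery behind \cref{thm:entropy} and \cref{thm:ordered}, replacing the entropy-over-time budget by stationarity.

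\textbf{Step 1: zero-dissipation bound.} Fix \(N\) and take \(P_0^N=Q^N\). Invariance gives \(P_t^N=Q^N\) for all \(t\), so \(H_N(t)=H_N(0)<\infty\). The identity \eqref{eq:entropy-identity} then yields \(2t\,\E_{Q^N}\F_N^\pi=0\), hence \(\E_{Q^N}\F_N^\pi=0\) and
\[
 \E_{Q^N}\mathcal E_N^\pi=c_N\to0 .
\]
This uses only finiteness of \(\Ent(Q^N\mid\pi^{\otimes N})\) for each \(N\), never a uniform bound; this is why no uniform entropy assumption is needed. Implicitly, \(Q^N\) is concentrated on \(\mathcal D_N\), which holds because \(Q^N\ll\pi^{\otimes N}\) and the diagonals are Lebesgue-null.

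\textbf{Step 2: pass to the continuum.} Set \(\Lambda_N:=(\bm x\mapsto\mu_{\bm x}^N)_\#Q^N\in\Pp(\Pp(\T^d))\). Since \(\Pp(\T^d)\) is compact, so is \(\Pp(\Pp(\T^d))\), and every subsequence has a further subsequence with \(\Lambda_N\rightharpoonup\Lambda\). We then invoke the capped-diagonal liminf, \cref{prop:empirical-liminf}, exactly as in the proof of \cref{thm:ordered}. Concretely, for a bounded continuous cap \(G_\pi^M\le G_\pi\) off the diagonal (up to a controlled error), the quantity \(\F_N^\pi\) is bounded below by the capped quadratic form on \(\mu_{\bm x}^N\) minus an \(O(M/N)\) diagonal term. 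The capped form is a bounded continuous functional of \(\mu\), so it passes to the weak limit. Letting \(M\to\infty\) by monotone convergence gives
\[
 \int\F^\pi(\mu)\,\dd\Lambda(\mu)\le\liminf_N\E_{Q^N}\mathcal E_N^\pi=0 .
\]
Since \(\F^\pi\ge0\), as the lower-semicontinuous relaxation of a nonnegative Gram energy, \(\F^\pi=0\) holds \(\Lambda\)-a.s. Coercivity (\(\F^\pi(\mu)=0\iff\mu=\pi\)) gives \(\Lambda=\delta_\pi\). The limit does not depend on the subsequence, so the whole sequence converges.

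\textbf{Step 3: marginals.} The map \(\Pp(\Pp(\T^d))\ni\Lambda\mapsto\int\mu\,\dd\Lambda(\mu)\) is weakly continuous, since \(\mu\mapsto\int f\,\dd\mu\) is bounded and continuous for \(f\in C(\T^d)\). The intensity of \(\Lambda_N\) equals \(\frac1N\sum_iQ_i^N\), so it converges to \(\pi\). If \(Q^N\) is exchangeable, all marginals coincide, and therefore \(Q_1^N=\frac1N\sum_iQ_i^N\rightharpoonup\pi\).

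\textbf{Main obstacle.} The delicate step is Step 2, namely justifying the liminf inequality in exactly the form \cref{prop:empirical-liminf} provides. This requires applying it to a sequence of laws on configurations (not time averages) with only \(\E\mathcal E_N^\pi\to0\). It also requires the relaxed \(\F^\pi\), which may be infinite on singular limits, to remain coercive on all of \(\Pp(\T^d)\). We expect \cref{prop:empirical-liminf} is stated for arbitrary sequences of empirical-measure laws, so it should apply verbatim.
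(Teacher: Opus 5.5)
Your proposal is correct and matches the paper's proof step for step: stationarity plus \eqref{eq:entropy-identity} forces \(\E_{Q^N}\F_N^\pi=0\); \cref{prop:empirical-liminf} along convergent subsequences gives \(\int\F^\pi\,\dd\Lambda\le0\); coercivity gives \(\Lambda=\delta_\pi\); and the barycenter map handles the marginals. The only difference is cosmetic: your detour through \(\mathcal E_N^\pi\) and \(c_N\to0\) is unnecessary, since the liminf can be applied directly to \(\int\F_N^\pi\,\dd Q^N=0\).
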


\section{Stein--Riesz structure}\label{sec:structure}

This section establishes the two properties of \(G_\pi\) used throughout the
proof.  Locally, it has a positive Riesz singularity; this drives collision
avoidance and permits a monotone cap of the diagonal.  Globally, it is a centered
Gram kernel; this makes the continuum energy nonnegative and identifies it as
a KSD.  The finite-particle difficulty is precisely that these global
properties do not make sense on the infinite diagonal.

\subsection{Local singularity}

\begin{proposition}[Local expansion]\label{prop:local-expansion}
There are \(r_0,c_0,C>0\) such that, for
\(0<r:=\dist(x,y)<r_0\),
\begin{equation}\label{eq:local-G}
 G_\pi(x,y)=c_0r^{-\sigma}+R_\pi(x,y),
\end{equation}
where
\begin{equation}\label{eq:R-bound}
 |R_\pi(x,y)|
 \le C
 \begin{cases}
 1,&0<\sigma<2,\\
 1+|\log r|,&\sigma=2,\\
 1+r^{-(\sigma-2)},&2<\sigma<d.
 \end{cases}
\end{equation}
In particular, after decreasing \(r_0\) if needed,
\begin{equation}\label{eq:G-positive-near}
 G_\pi(x,y)\ge\frac{c_0}{2}\dist(x,y)^{-\sigma}>0
 \qquad(0<\dist(x,y)<r_0).
\end{equation}
\end{proposition}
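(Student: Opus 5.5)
The proof will go through the local behavior of the periodic Riesz kernel \(\kappa_a\) near the origin, fed into the three terms of \eqref{eq:G-expansion}. First, decompose \(\kappa_a=c_{d,a}|x|^{2a-d}\chi(x)+h_a(x)\) on \([-\tfrac12,\tfrac12)^d\). Here \(\chi\) is a smooth cutoff equal to \(1\) near \(0\), \(c_{d,a}>0\) is the Euclidean Riesz constant for \((-\Delta)^{-a}\), and \(h_a\) is smooth near the origin. When \(2a=d\) the power is replaced by a logarithm, and when \(2a>d\) the power is bounded. To obtain this decomposition, I will compare the Fourier multiplier \((2\pi|\ell|)^{-2a}\) with the Fourier transform of the Euclidean kernel \(|x|^{2a-d}\chi\). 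The difference has rapidly decaying coefficients away from the low-frequency and \(\ell=0\) corrections. Equivalently, \((-\Delta)^a\) applied to the difference is smooth near \(0\), since \((-\Delta)^a\kappa_a=\delta_0-1\). From the decomposition I will read off the derivative bounds \(|\nabla\kappa_a(z)|\le C(1+|z|^{2a-d-1})\) (for \(2a-d-1<0\), which holds because \(a<1+d/2\)) and \(|\kappa_a(z)|\le C(1+|z|^{2a-d})\), with a logarithmic bound when \(2a=d\).

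The leading term is \(-\Delta\kappa_a(z)\). A direct computation gives \(-\Delta|z|^{2a-d}=-(2a-d)(2a-2)|z|^{2a-d-2}=(d-2a)(2a-2)|z|^{-\sigma}\), using \(\sigma=d+2-2a\). This is not quite right when \(2a\ge d\), so I will instead compute \(-\Delta\) of the Euclidean kernel of \((-\Delta)^{-a}\) as the Euclidean kernel of \((-\Delta)^{-(a-1)}\). That is \(c_{d,a-1}|z|^{2(a-1)-d}=c_{d,a-1}|z|^{-\sigma}\), and this holds uniformly because \(0<a-1<d/2\). So \(c_0=c_{d,a-1}>0\), and the remainder from the smooth part \(h\) and from the cutoff is \(O(1)\). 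This is the cleanest route. I will write \(-\Delta\kappa_a=\kappa_{a-1}\) plus a constant, using the Fourier multiplier identity \((2\pi|\ell|)^2\widehat\kappa_a(\ell)=\widehat\kappa_{a-1}(\ell)\). The expansion \(\kappa_{a-1}(z)=c_0|z|^{-\sigma}+O(1)\) then follows from the periodic-versus-Euclidean comparison in the regime \(0<2(a-1)<d\).

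Next come the remainder terms. Since \(V\in W^{2,\infty}\), the score \(\bpi\) is Lipschitz, so \(|\bpi(x)-\bpi(y)|\le Cr\). The middle term is therefore bounded by \(Cr\,(1+r^{2a-d-1})=C(r+r^{2-\sigma})\). The last term is bounded by \(\|\bpi\|_\infty^2|\kappa_a|\le C(1+r^{2-\sigma})\), or \(C(1+|\log r|)\) when \(2a=d\), i.e.\ \(\sigma=2\). Both contributions match the three cases of \eqref{eq:R-bound}: they are bounded for \(\sigma<2\), logarithmic at \(\sigma=2\), and of order \(r^{-(\sigma-2)}\) for \(\sigma>2\). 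Finally, \eqref{eq:G-positive-near} follows because every remainder is \(o(r^{-\sigma})\) as \(r\to0\), so I will shrink \(r_0\) until \(|R_\pi|\le\tfrac{c_0}{2}r^{-\sigma}\).

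The main obstacle is the first step: justifying rigorously that the periodic kernel \(\kappa_{a-1}\) (and \(\kappa_a\), with its gradient) differs from the Euclidean Riesz kernel by a function that is smooth, or at least \(C^1\), near \(0\). I would prove this by a Poisson-summation or heat-kernel subordination argument. I will write \(\kappa_s=\Gamma(s)^{-1}\int_0^\infty t^{s-1}(p_t-1)\,\dd t\) with \(p_t\) the periodic heat kernel, and \(p_t=\sum_{n}g_t(\cdot+n)\). The \(n=0\) term gives the Euclidean Riesz kernel plus a smooth error from \(t\ge1\), and the \(n\ne0\) images are smooth near the origin.
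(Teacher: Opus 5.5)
Your proposal is correct and matches the paper's proof, which likewise inserts the periodic Riesz asymptotics of Lemma~\ref{lem:riesz-asymptotics} into the three terms of \eqref{eq:G-expansion}, uses the Lipschitz score for the middle term, and absorbs all lower-order terms to obtain \eqref{eq:G-positive-near}. Your exact identity \(-\Delta\kappa_a=\kappa_{a-1}\) combined with heat-kernel subordination is a more explicit proof of that appendix lemma, and it makes \(c_0=c_{d,a-1}>0\) transparent; two harmless slips are that no additive constant is needed (both zero modes vanish), and that for \(2a>d\) the coefficient of \(|z|^{2a-d}\) in your first decomposition is negative, not positive, which does not matter since only size bounds on \(\kappa_a\) and \(\nabla\kappa_a\) are used.
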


\begin{proof}
The periodic Riesz asymptotics in \cref{lem:riesz-asymptotics} give
\[
 -\Delta\kappa_a(z)=c_0|z|^{-\sigma}+O(1)
\]
and
\[
 |\kappa_a(z)|\lesssim
 \begin{cases}
 1,&\sigma<2,\\
 1+|\log|z||,&\sigma=2,\\
 |z|^{-(\sigma-2)},&\sigma>2,
 \end{cases}
 \qquad
 |\nabla\kappa_a(z)|\lesssim 1+|z|^{1-\sigma}.
\]
Since \(\bpi\) is Lipschitz,
\[
 |(\bpi(x)-\bpi(y))\cdot\nabla\kappa_a(y-x)|
 \lesssim |x-y|\bigl(1+|x-y|^{1-\sigma}\bigr).
\]
Substitution in \eqref{eq:G-expansion} proves
\eqref{eq:local-G}--\eqref{eq:R-bound}.  Every remainder is lower order than
\(r^{-\sigma}\), which yields \eqref{eq:G-positive-near}.
\end{proof}

The proposition shows that the target dependence does not alter the leading
singularity.  The positivity in \eqref{eq:G-positive-near} will be used twice:
first to repel collapsing clusters, and later to cap the singular diagonal
from below without losing an empirical lower bound.

\subsection{Stein centering and positivity}

Local positivity controls the singularity but does not yet explain why the
continuum energy measures discrepancy from \(\pi\).  That information comes
from the global Stein identities.

\begin{proposition}[Centering and Gram representation]\label{prop:gram}
The kernel is centered in each variable:
\begin{equation}\label{eq:centering}
 \int_{\T^d}G_\pi(x,y)\,\dd\pi(y)=0,
 \qquad
 \int_{\T^d}G_\pi(x,y)\,\dd\pi(x)=0
\end{equation}
in the distributional sense.  If \(\mu\) is smooth, then
\begin{equation}\label{eq:gram}
 2\F^\pi(\mu)
 =
 \left\|
 \int_{\T^d}\mathcal A_{\pi,y}k(y,\cdot)\,\dd\mu(y)
 \right\|_{\mathcal H_{\kappa_a}^d}^{\,2}\ge0.
\end{equation}
Consequently \(\F^\pi\) is convex.
\end{proposition}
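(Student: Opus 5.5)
The plan is to rewrite $G_\pi$ as a composition of two adjoint Stein operators acting on the translation-invariant kernel, and then to read off both centering and the Gram identity from Fourier duality. Pointwise reproduction is never used.

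\textbf{Centering.} For fixed $x$, write $G_\pi(x,y)=\mathcal S_{\pi,x}\bigl[\mathcal A_{\pi,y}k(y,x)\bigr]$. The key observation is that $\mathcal A_{\pi,y}u(y)=\pi(y)^{-1}\nabla_y(\pi(y)u(y))$, so that
\[
\int \mathcal A_{\pi,y}k(y,x)\,\pi(y)\dd y=\int\nabla_y\bigl(\pi(y)\kappa_a(y-x)\bigr)\dd y=0
\]
on the torus. This holds in the distributional sense: pair with a test function $\varphi(x)$ and integrate by parts in $y$, using $\kappa_a\in L^1$ and $\pi\in W^{1,\infty}$. Applying $\mathcal S_{\pi,x}$, which is a first-order differential operator in $x$ and commutes with the $y$-integral when tested against $\varphi$, gives the first identity in \eqref{eq:centering}. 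For the second identity, observe that $\int \mathcal S_{\pi,x}u(x)\,\pi(x)\dd x=\int\nabla\cdot(\pi u)\dd x=0$ for every vector field $u(x)=\mathcal A_{\pi,y}k(y,x)$. Again the divergence theorem is justified distributionally: pair with $\psi(y)$, and note that the $y$-adjoint of $\mathcal A_{\pi,y}$ applied to $\psi$ produces a smooth enough field.

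\textbf{Gram identity.} For smooth $\mu$, set $f(\cdot):=\int\mathcal A_{\pi,y}k(y,\cdot)\dd\mu(y)$. Integrating by parts in $y$ gives
\[
f=\kappa_a*(\bpi\mu)-\nabla\kappa_a*\mu,
\]
which is (minus) the velocity \eqref{eq:velocity-intro} up to sign conventions. Each component is $\kappa_a*g_r$ with $g_r:=b_{\pi,r}\mu+\partial_r\mu$ smooth. Since $\widehat\kappa_a(0)=0$, we have $f_r\in\mathcal H_{\kappa_a}$, and Parseval together with \eqref{eq:native-space} gives
\[
\|f_r\|^2_{\mathcal H_{\kappa_a}}=\sum_{\ell\ne0}\widehat\kappa_a(\ell)|\widehat{g_r}(\ell)|^2=\iint\kappa_a(x-y)g_r(x)g_r(y)\dd x\dd y .
\]
Summing over $r$ and moving the derivatives back onto $\kappa_a$ produces $\iint G_\pi(x,y)\dd\mu(x)\dd\mu(y)$. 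This step is exactly $\mathcal S_{\pi,x}\mathcal A_{\pi,y}$ read through integration by parts, and the identity is first checked on the Fourier side and then rewritten in real space.

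Finally, replace $\mu\otimes\mu$ by $(\mu-\pi)^{\otimes2}$. By centering, the cross terms $\iint G_\pi\dd\mu\dd\pi$ and $\iint G_\pi\dd\pi\dd\pi$ vanish. More directly, the $\pi$-contribution to $f$ is $\int\nabla_y(\pi\kappa_a(y-\cdot))\dd y=0$, so $f$ is unchanged when $\mu$ is replaced by $\mu-\pi$. This yields \eqref{eq:gram} and its nonnegativity.

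\textbf{Convexity and the main difficulty.} The map $\mu\mapsto f_\mu$ is affine, so $\mu\mapsto\|f_\mu\|^2$ is a convex quadratic on smooth measures. The lower-semicontinuous relaxation of a convex functional is convex, so $\F^\pi$ is convex on all of $\Pp(\T^d)$.

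The main obstacle is justifying the distributional integrations by parts when $\kappa_a$ is singular, in particular moving derivatives onto $\kappa_a$ so that the real-space bilinear form equals the Fourier sum. I would handle this in two stages. First, work entirely in Fourier variables with the smooth densities $g_r$, which have rapidly decaying coefficients. Second, invoke local integrability of $G_\pi$, which is $O(r^{-\sigma})$ with $\sigma<d$ by \cref{prop:local-expansion}, to identify the absolutely convergent real-space integral. One way to do this is to mollify $\kappa_a$ and pass to the limit by dominated convergence.
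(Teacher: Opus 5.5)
Your proposal is correct and follows essentially the paper's own argument. Both write the feature as $\kappa_a*q_\mu$ with $q_\mu=\bpi\mu-\nabla\mu$, identify $\int q_\mu\cdot(\kappa_a*q_\mu)$ with the native-space norm by Parseval, and integrate by parts to recover $\iint G_\pi\,\dd\mu\,\dd\mu$. Both then use centering, or the vanishing of the $\pi$-feature, to pass to $\mu-\pi$, and get convexity from the squared norm of an affine map together with the relaxation. You should fix one sign slip: $g_r$ must be $b_{\pi,r}\mu-\partial_r\mu$, as your own formula for $f$ dictates, since with $+\partial_r\mu$ the $\pi$-feature would not vanish; also, $f$ is exactly the velocity $v_\mu$ of \eqref{eq:velocity-intro}, not its negative.
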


\begin{proof}
We first record the two integrations by parts separately.  For every smooth
periodic vector field \(u\),
\[
 \int_{\T^d}\mathcal S_{\pi}u\,\dd\pi
 =\int_{\T^d}\nabla\cdot(u\pi)\,\dd x=0.
\]
For fixed \(x\), the feature is also centered:
\[
 \int_{\T^d}\mathcal A_{\pi,y}k(y,x)\,\dd\pi(y)
 =\int_{\T^d}\nabla_y\!\bigl(k(y,x)\pi(y)\bigr)\,\dd y=0.
\]
These formulas are understood after testing in the remaining variable, so
they remain valid for the singular kernel.  Apply the first formula to
\(u(x)=\mathcal A_{\pi,y}k(y,x)\), and apply the second before
\(\mathcal S_{\pi,x}\), to obtain the two identities in
\eqref{eq:centering}.

We next make the Gram calculation explicit without invoking pointwise
reproduction.  Write \(\dd\mu=\rho\,\dd x\) and set
\[
 q_\rho:=\bpi\rho-\nabla\rho.
\]
Integration by parts in the feature variable gives
\[
 \Phi_\rho
 :=\int_{\T^d}\mathcal A_{\pi,y}k(y,\cdot)\rho(y)\,\dd y
 =\kappa_a*q_\rho.
\]
For each fixed \(y\), integration by parts in \(x\) gives
\[
 \int_{\T^d}\rho(x)\,
 \mathcal S_{\pi,x}\mathcal A_{\pi,y}k(y,x)\,\dd x
 =\int_{\T^d}q_\rho(x)\cdot
 \mathcal A_{\pi,y}k(y,x)\,\dd x.
\]
Integrating against \(\rho(y)\,\dd y\) and using the definition of
\(\Phi_\rho\), we find
\begin{equation}\label{eq:gram-intermediate}
 \iint G_\pi(x,y)\rho(x)\rho(y)\,\dd x\,\dd y
 =\int_{\T^d}q_\rho(x)\cdot\Phi_\rho(x)\,\dd x
 =\int_{\T^d}q_\rho\cdot(\kappa_a*q_\rho)\,\dd x.
\end{equation}
This is the quadratic form associated with the scalar Stein kernel.
Since \(\widehat\kappa_a(\ell)>0\) for \(\ell\ne0\), Parseval's identity
yields
\[
 \int q_\rho\cdot(\kappa_a*q_\rho)
 =\sum_{\ell\ne0}\widehat\kappa_a(\ell)
   |\widehat q_\rho(\ell)|^2
 =\sum_{\ell\ne0}\widehat\kappa_a(\ell)^{-1}
   |\widehat\Phi_\rho(\ell)|^2
 =\|\Phi_\rho\|_{\mathcal H_{\kappa_a}^d}^{2},
\]
where the middle equality uses
\(\widehat\Phi_\rho=\widehat\kappa_a\widehat q_\rho\), and the last is
exactly \eqref{eq:native-space}.  Finally, the feature generated by \(\pi\)
vanishes because
\(\bpi\pi-\nabla\pi=0\).  Thus the same identity holds with
\(\rho\) replaced by \(\rho-\pi\).  Together with
\eqref{eq:centering} and \eqref{eq:gram-intermediate}, this proves
\eqref{eq:gram}.  Convexity follows because the right-hand side is the
squared norm of a linear function of \(\mu\); it passes to the
lower-semicontinuous extension by approximation.
\end{proof}

\begin{remark}
Equations \eqref{eq:centering} and \eqref{eq:gram} explain why the centered
form in \eqref{eq:F-cont} equals
\(\frac12\iint G_\pi\,\dd\mu\,\dd\mu\) whenever either expression is finite.
They do not justify inserting an atom on the diagonal; that is precisely the
finite-\(N\) renormalization problem.
\end{remark}

\section{Collision-free particle dynamics and Liouville structure}
\label{sec:dynamics}

Before differentiating the joint entropy, we must know that the singular ODE
is globally well-posed.  The only possible finite-time obstruction is a
collision.  Below the logarithmic threshold, the radius of a collapsing
cluster has a strictly positive derivative at each sufficiently small running
minimum.  At and above the threshold, the Riesz pair energy diverges at
collision and is decreasing near the collision set.  Once collisions are
excluded, the Liouville calculation can be performed on the collision-free
configuration space.

\subsection{Cluster repulsion below the logarithmic threshold}

\begin{lemma}[Cluster differential inequality]\label{lem:cluster}
Assume \(0<\sigma<2\).  In local Euclidean lifts, let
\[
 \bar X_I=\frac1{|I|}\sum_{i\in I}X_i,
 \qquad
 R_I^2=\sum_{i\in I}|X_i-\bar X_I|^2
\]
for a set \(I\) with \(|I|\ge2\).  If, at the time under consideration,
\(R_I\) is sufficiently small and the particles in \(I\) are separated from
those outside \(I\) by a fixed positive distance, then
\begin{equation}\label{eq:cluster-ineq}
 \frac12\frac{\dd}{\dd t}R_I^2
 \ge c\bigl(R_I^2\bigr)^{1-\sigma/2}-CR_I^2,
\end{equation}
where \(c,C>0\) may depend on the separation and on \(I\).
\end{lemma}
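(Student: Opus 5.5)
We differentiate \(R_I^2\) along \eqref{eq:particle-system} and split the sum over source particles into intra-cluster and external contributions. Since \(\sum_{i\in I}(X_i-\bar X_I)=0\), we have
\[
 \frac12\frac{\dd}{\dd t}R_I^2=\sum_{i\in I}(X_i-\bar X_I)\cdot\dot X_i
 =\sum_{i\in I}(X_i-\bar X_I)\cdot\bigl(\dot X_i-w\bigr)
\]
for any vector \(w\) independent of \(i\). For each \(i\in I\) we write
\[
 \dot X_i=\frac1N\sum_{j\in I,\,j\ne i}\bigl\{\nabla\kappa_a(X_j-X_i)+\kappa_a(X_j-X_i)\bpi(X_j)\bigr\}+E_i ,
\]
where \(E_i\) collects the terms with \(j\notin I\). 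The external part is smooth in \(X_i\), because the outside particles stay at a fixed positive distance and \(\kappa_a\) is smooth away from the origin. Hence \(|E_i-E_{i'}|\le L|X_i-X_{i'}|\), with \(L\) depending on the separation. Taking \(w=E_{i_0}\) for a fixed \(i_0\in I\) or, more symmetrically, subtracting the cluster average of \(E_i\), this part contributes at most \(C R_I^2\), using \(|X_i-X_{i'}|\le 2R_I\) and Cauchy--Schwarz.

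For the intra-cluster repulsion we use the periodic Riesz asymptotics \cref{lem:riesz-asymptotics}. With \(\sigma=d+2-2a\), the kernel behaves like \(\kappa_a(z)=\kappa_a(0)-c_1|z|^{2-\sigma}+O(|z|^2)\) near the origin, where \(\kappa_a(0)\) is finite since \(\sigma<2\) and \(c_1>0\). This gives
\[
 \nabla\kappa_a(z)=-c_1(2-\sigma)|z|^{-\sigma}z+O(|z|).
\]
The pair terms \(\nabla\kappa_a(X_j-X_i)\) are antisymmetric under \(i\leftrightarrow j\). Symmetrizing,
\[
 \sum_{i\in I}\sum_{j\in I\setminus\{i\}}(X_i-\bar X_I)\cdot\nabla\kappa_a(X_j-X_i)
 =\frac12\sum_{i\ne j\in I}(X_i-X_j)\cdot\nabla\kappa_a(X_j-X_i).
\]
Substituting the expansion, the principal part equals \(\tfrac{c_1(2-\sigma)}{2}\sum_{i\ne j\in I}|X_i-X_j|^{2-\sigma}\ge0\), and the remainder is \(O(R_I^2)\). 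Since \(0<2-\sigma<2\), the function \(s\mapsto s^{(2-\sigma)/2}\) is subadditive, so
\[
 \sum_{i\ne j}|X_i-X_j|^{2-\sigma}\ge\Bigl(\sum_{i\ne j}|X_i-X_j|^2\Bigr)^{1-\sigma/2}=(2|I|R_I^2)^{1-\sigma/2}.
\]
Dividing by \(N\), which is fixed, this produces the positive term \(c(R_I^2)^{1-\sigma/2}\).

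The score-weighted intra-cluster terms \(\kappa_a(X_j-X_i)\bpi(X_j)\) are the main obstacle, since they are not antisymmetric. We would write
\[
 \kappa_a(X_j-X_i)\bpi(X_j)=\kappa_a(0)\bpi(\bar X_I)+\bigl[\kappa_a(X_j-X_i)-\kappa_a(0)\bigr]\bpi(X_j)+\kappa_a(0)\bigl(\bpi(X_j)-\bpi(\bar X_I)\bigr).
\]
The first piece gives the same velocity for every \(i\), except that the sum runs over \(j\ne i\). Since \(|I|-1\) is the same for all \(i\in I\), it cancels against \(\sum_i(X_i-\bar X_I)=0\). The third piece is \(O(R_I)\) per particle by the Lipschitz bound on \(\bpi\), contributing \(O(R_I^2)\). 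The second piece is bounded by \(C|X_j-X_i|^{2-\sigma}\), which contributes \(O\bigl(R_I\cdot R_I^{2-\sigma}\bigr)=O\bigl(R_I^{3-\sigma}\bigr)\). This is \(o\bigl(R_I^{2-\sigma}\bigr)\), so it is absorbed into half of the positive term once \(R_I\) is small. The same absorption handles the \(O(R_I^2)\) remainders if desired, but they can simply be kept in \(-CR_I^2\). Collecting all pieces yields \eqref{eq:cluster-ineq}.
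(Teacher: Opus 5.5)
Your proposal is correct and follows the paper's proof essentially step for step. It uses the same pairwise symmetrization of the internal Riesz force with \(\sum_{i<j}|X_i-X_j|^2=|I|R_I^2\), the same subtraction of \((|I|-1)\kappa_a(0)\bpi(\bar X_I)\) leaving remainders of order \(R_I^{3-\sigma}\) and \(R_I^2\), and the same Lipschitz treatment of the exterior velocity. The only difference is cosmetic: you get the radial repulsion by differentiating an expansion of \(\kappa_a\), whereas the paper quotes \(-z\cdot\nabla\kappa_a(z)\ge c|z|^{2-\sigma}\) directly. That differentiation is justified only because the parametrix remainder in the periodic Riesz asymptotics is a smooth even function; a bare \(O(|z|^2)\) term cannot be differentiated.
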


\begin{proof}
Write \(q_i=X_i-\bar X_I\).  Symmetrizing the internal Riesz force gives
\[
 \frac1N\sum_{i\in I}q_i\cdot
 \sum_{\substack{j\in I\\j\ne i}}\nabla\kappa_a(X_j-X_i)
 =
 \frac1N\sum_{\substack{i<j\\i,j\in I}}
 -(X_j-X_i)\cdot\nabla\kappa_a(X_j-X_i).
\]
The asymptotic bound
\begin{equation}\label{eq:radial-repulsion}
 -z\cdot\nabla\kappa_a(z)\ge c|z|^{2-\sigma}
\end{equation}
and the identity
\(\sum_{i<j}|X_i-X_j|^2=|I|R_I^2\) show that this is bounded below by
\(c(R_I^2)^{1-\sigma/2}\).

For the target-weighted internal force, subtract
\((|I|-1)\kappa_a(0)\bpi(\bar X_I)\) from each inner sum; its total
contribution vanishes because \(\sum_iq_i=0\).  The remaining summands are
\[
 \bigl(\kappa_a(X_j-X_i)-\kappa_a(0)\bigr)\bpi(X_j)
 +\kappa_a(0)\bigl(\bpi(X_j)-\bpi(\bar X_I)\bigr).
\]
Since
\[
 |\kappa_a(z)-\kappa_a(0)|\lesssim |z|^{2-\sigma},
 \qquad
 |\bpi(X_j)-\bpi(\bar X_I)|\lesssim R_I,
\]
their contribution is \(O(R_I^{3-\sigma})+O(R_I^2)\), which is lower order
than \(R_I^{2-\sigma}\).  Finally, the velocity generated by particles
outside \(I\) is Lipschitz across the separated cluster.  Subtracting its
value at \(\bar X_I\) leaves \(O(R_I^2)\).  These estimates give
\eqref{eq:cluster-ineq}.
\end{proof}

\subsection{A pair-energy barrier at and above the logarithmic threshold}

Define
\begin{equation}\label{eq:pair-energy}
 \mathcal W_N(\bm x):=\sum_{1\le i<j\le N}\kappa_a(x_j-x_i)
\end{equation}
and let \(\bm C^N=(C_1^N,\ldots,C_N^N)\), where
\[
 C_i^N(\bm x)
 :=\frac1N\sum_{j\ne i}\kappa_a(x_j-x_i)\bpi(x_j).
\]
Then the particle system can be written as
\begin{equation}\label{eq:pair-energy-flow}
 \dot{\bm X}=-\frac1N\nabla\mathcal W_N(\bm X)+\bm C^N(\bm X).
\end{equation}

\begin{lemma}[Pair-energy barrier]\label{lem:pair-energy-barrier}
Assume \(2\le\sigma<d\).  As a collision-free configuration approaches the
collision set,
\begin{equation}\label{eq:pair-barrier-limits}
 \mathcal W_N(\bm x)\longrightarrow+\infty,
 \qquad
 \frac{|\bm C^N(\bm x)|}{|\nabla\mathcal W_N(\bm x)|}
 \longrightarrow0.
\end{equation}
\end{lemma}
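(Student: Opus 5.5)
The plan is to prove the two limits in \eqref{eq:pair-barrier-limits} separately. For both I will use the periodic Riesz asymptotics of \cref{lem:riesz-asymptotics} in the range \(2\le\sigma<d\). These give
\[
\kappa_a(z)=c_1|z|^{-(\sigma-2)}+O(1)\ (\sigma>2),\qquad \kappa_a(z)=c_1\log(1/|z|)+O(1)\ (\sigma=2),
\]
with \(\kappa_a\) smooth on \(\T^d\setminus\{0\}\) and hence bounded below. They also give the radial repulsion \(-z\cdot\nabla\kappa_a(z)\ge c|z|^{2-\sigma}\) for small \(|z|\) (the analogue of \eqref{eq:radial-repulsion}) and the Hessian bound \(|\nabla^2\kappa_a(z)|\lesssim|z|^{-\sigma}\). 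Write \(f(s)=s^{-(\sigma-2)}\) for \(\sigma>2\) and \(f(s)=1+\log(1/s)\) for \(\sigma=2\).

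\emph{Divergence of \(\mathcal W_N\).} Approaching the collision set means \(\delta:=\min_{i\ne j}\dist(x_i,x_j)\to0\). Every term of \(\mathcal W_N\) is bounded below by a constant \(-K\), and the term of the closest pair is at least \(c_1 f(\delta)-C\). Hence \(\mathcal W_N\ge c_1f(\delta)-C-N^2K\to+\infty\).

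\emph{Ratio bound.} Since \(\|\bpi\|_\infty<\infty\),
\[
|\bm C^N(\bm x)|\lesssim \tfrac1N\sum_{i\ne j}|\kappa_a(x_j-x_i)|\lesssim N^2 f(\delta)+1 .
\]
For a lower bound on \(|\nabla\mathcal W_N|\) I will test against a cluster dilation, as in \cref{lem:cluster}.

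First I will build a well-separated tight cluster by single-linkage pigeonholing. Fix a large \(\Lambda\) and the scales \(\delta,\Lambda\delta,\Lambda^2\delta,\dots,\Lambda^{N}\delta\). Consider the graph joining pairs at distance \(<\Lambda^k\delta\). Its components change at most \(N-1\) times, so some \(k\le N\) has the same components at thresholds \(\Lambda^{k}\delta\) and \(\Lambda^{k+1}\delta\). Let \(I\) be the component containing the closest pair. Then \(I\) has diameter \(D\le N\Lambda^k\delta\), every particle outside \(I\) is at distance \(\ge\Lambda^{k+1}\delta\ge(\Lambda/N)D\) from \(I\), and \(\delta\ge D/(N\Lambda^N)\), so \(f(\delta)\lesssim_{N,\Lambda}f(D)\). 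Also \(D\to0\) as \(\delta\to0\).

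Next I will test with \(q_i=x_i-\bar x_I\) for \(i\in I\) and \(q_i=0\) otherwise, so that \(|q|=R_I\le\sqrt N D\). The internal part of \(-q\cdot\nabla\mathcal W_N\) symmetrizes as in \cref{lem:cluster} to \(\sum_{i<j\in I}-(x_j-x_i)\cdot\nabla\kappa_a(x_j-x_i)\). By radial repulsion, and since some pair in \(I\) has distance \(\ge D/N\), this is \(\ge cD^{2-\sigma}\); for \(\sigma=2\) it is \(\ge c\). The external part equals \(\sum_{i\in I}q_i\cdot\bigl(F(x_i)-F(\bar x_I)\bigr)\), where \(F\) is the force generated by the particles outside \(I\); subtracting \(F(\bar x_I)\) is allowed because \(\sum_iq_i=0\). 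By the Hessian bound at separation \(\ge(\Lambda/N)D\), this part is \(\lesssim N\,R_I\,D\,(\Lambda D/N)^{-\sigma}\lesssim_N\Lambda^{-\sigma}D^{2-\sigma}\). Taking \(\Lambda\) large (depending only on \(N,d,a\)) absorbs it, and I get
\[
|\nabla\mathcal W_N|\ge\frac{-q\cdot\nabla\mathcal W_N}{|q|}\gtrsim D^{1-\sigma}.
\]

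Combining the two bounds, \(|\bm C^N|/|\nabla\mathcal W_N|\lesssim(f(D)+1)D^{\sigma-1}\). For \(\sigma>2\) this is \(\lesssim D\), and for \(\sigma=2\) it is \(\lesssim D(1+\log(1/D))\); both tend to \(0\) as \(D\to0\). All constants may depend on \(N\), which is fixed.

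The main obstacle is the logarithmic endpoint \(\sigma=2\). There the crude bound \(f(\delta)\) could in principle outrun \(D^{-1}\) if the minimal distance were much smaller than the cluster diameter. This is why the cluster has to be chosen at a scale gap, so that \(\delta\) and \(D\) are comparable up to \(N\)-dependent constants. The other point needing care is the external Lipschitz estimate, which is exactly what the separation ratio \(\Lambda\) is chosen to control.
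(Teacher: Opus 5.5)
Your proof is correct and follows essentially the paper's route: the crude bound $|\bm C^N|\lesssim f(r)$, combined with the closest-scale virial lower bound $|\nabla\mathcal W_N|\gtrsim r^{1-\sigma}$, which comes from testing against the dilation of a well-separated cluster containing the closest pair and subtracting the exterior force at $\bar x_I$ (the paper's \cref{lem:pair-energy-gradient}). The only difference is that you build the separated cluster directly by a scale-gap pigeonhole with fixed ratio $\Lambda$, whereas the paper argues by contradiction using subsequential limits of the ratios $\dist(x_i,x_j)/r$; your version is effective and gives explicit $N$-dependent constants.
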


\begin{proof}
Let \(r(\bm x):=\min_{i\ne j}\dist(x_i,x_j)\).  The first limit follows from
the logarithmic divergence of \(\kappa_a\) when \(\sigma=2\), its
\(r^{2-\sigma}\) divergence when \(\sigma>2\), and its boundedness from below
away from the diagonal.  Moreover,
\[
 |\bm C^N(\bm x)|
 \le C
 \begin{cases}
  1+|\log r(\bm x)|,&\sigma=2,\\
  r(\bm x)^{2-\sigma},&\sigma>2.
 \end{cases}
\]
The closest-scale cluster estimate in
\cref{lem:pair-energy-gradient} gives
\[
 |\nabla\mathcal W_N(\bm x)|
 \ge c\,r(\bm x)^{1-\sigma}
\]
for all sufficiently small \(r(\bm x)\).  The quotient is therefore bounded
by \(Cr(1+|\log r|)\) at \(\sigma=2\) and by \(Cr\) when \(\sigma>2\), proving
\eqref{eq:pair-barrier-limits}.
\end{proof}

\begin{proposition}[No collision]\label{prop:no-collision}
Every solution of \eqref{eq:particle-system} with initial condition in
\(\mathcal D_N\) is global and collision-free.
\end{proposition}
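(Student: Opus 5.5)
The vector field in \eqref{eq:particle-system} is smooth on the open set \(\mathcal D_N\), because \(\kappa_a\) is smooth away from the origin and \(\bpi\in W^{1,\infty}\). Picard--Lindel\"of therefore gives a unique maximal solution on \([0,t^*)\). The plan is to show that \(t^*=\infty\) by ruling out collisions. Blow-up in the torus can only occur through the minimal distance \(r(\bm X(t)):=\min_{i\ne j}\dist(X_i,X_j)\) tending to zero along a sequence of times. Two remarks organise the argument. First, if \(r\) stays bounded below on \([0,t^*)\), then the velocity is bounded and Lipschitz on that region. The trajectory then stays in a compact subset of \(\mathcal D_N\), and the solution extends beyond \(t^*\). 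Second, it suffices to show that \(\liminf_{t\uparrow t^*}r(\bm X(t))>0\) whenever \(t^*<\infty\). The argument splits into the two regimes prepared above.

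\textbf{Case \(2\le\sigma<d\).} We use \(\mathcal W_N\) as a barrier, via \eqref{eq:pair-energy-flow}:
\[
\frac{\dd}{\dd t}\mathcal W_N(\bm X)=-\frac1N|\nabla\mathcal W_N|^2+\nabla\mathcal W_N\cdot\bm C^N
\le-\frac1N|\nabla\mathcal W_N|^2\Bigl(1-N\frac{|\bm C^N|}{|\nabla\mathcal W_N|}\Bigr).
\]
By \cref{lem:pair-energy-barrier}, there is \(\delta>0\) (depending on \(N\)) such that the bracket is positive whenever \(r(\bm x)<\delta\). Hence \(\mathcal W_N\) is nonincreasing while \(r<\delta\). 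On \(\{r\ge\delta\}\), \(\mathcal W_N\) is bounded by a constant \(M_\delta\). It follows that \(\mathcal W_N(\bm X(t))\le\max\{\mathcal W_N(\bm X(0)),M_\delta\}\) for all \(t<t^*\). Indeed, any excursion into \(\{r<\delta\}\) starts from a value at most this maximum and does not increase it. Because \(\mathcal W_N\to+\infty\) at the collision set (again by \cref{lem:pair-energy-barrier}), \(r\) stays bounded below.

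\textbf{Case \(0<\sigma<2\).} Here \(\kappa_a\) is bounded, so no energy barrier is available, and we use \cref{lem:cluster} instead. Suppose \(r(\bm X(t_n))\to0\) along a sequence. I would argue by a multiscale cluster decomposition at each time. Choose a threshold scale \(\epsilon\) and group the particles into clusters of those linked by chains of mutual distances at most \(\epsilon\). There are at most \(N\) levels of scales, so pigeonholing over the hierarchy produces a set \(I\) whose internal diameter is much smaller than its distance to \(I^c\). More precisely, take \(\epsilon_k=\theta^k\) and observe that some level \(k\le N\) has no pair distance in \((\epsilon_{k+1},\epsilon_k]\). The cluster containing the closest pair then has \(R_I\lesssim N\epsilon_{k+1}\) and separation \(\ge\epsilon_k\). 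Along the sequence, one such configuration (a fixed set \(I\) and a fixed level \(k\)) recurs infinitely often, because there are finitely many choices. For this fixed \(I\) and fixed separation, \eqref{eq:cluster-ineq} gives \(\frac{\dd}{\dd t}R_I^2>0\) whenever \(R_I\) is below some \(\rho_I\). Consequently \(R_I^2\) cannot attain a new running minimum below \(\rho_I\) while the separation persists. This holds in particular on the time interval before the separation degrades, which has length bounded below by the velocity bound at scale \(\epsilon_k\).

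Iterating over the finitely many subsets and levels gives a positive lower bound \(\min_I\rho_I\)-type on \(r\) over \([0,t^*)\), which contradicts \(r(\bm X(t_n))\to0\). To close the argument, I would make this quantitative by an induction on \(|I|\). The base case is pairs; the inductive step treats a cluster that would need to shrink, using the fact that its subclusters are already controlled. An alternative is to argue directly at the first time \(\tau\) at which \(r\) hits a new minimum \(\eta\) below a threshold: the cluster of the colliding pair at the gap level has \(\frac{\dd}{\dd t}R_I^2(\tau)>0\). Its radius is comparable to \(\eta\) up to \(N\)-dependent constants, which contradicts the minimality at \(\tau\) once \(\eta\) is small.

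\textbf{Main obstacle.} I expect the main difficulty to be this last step. \cref{lem:cluster} controls \(R_I\), whereas collision concerns the minimal distance, and only \(r\le\sqrt2R_I\lesssim_N r\) relates the two under the gap condition. Making the running-minimum contradiction rigorous, with uniform constants \(c,C\) in \cref{lem:cluster} across the finitely many admissible (subset, scale-ratio) pairs, is where the care is needed.
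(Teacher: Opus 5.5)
\textbf{Verdict.} Your argument for \(2\le\sigma<d\) is correct and is the paper's argument. The argument for \(0<\sigma<2\) has a genuine gap. (For \(2\le\sigma<d\), \(\mathcal W_N\) is nonincreasing on each excursion into \(\{r<\delta\}\) and bounded at entry, hence bounded along the trajectory, which contradicts \cref{lem:pair-energy-barrier}.)

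\textbf{The gap for \(0<\sigma<2\).} You take the chain-cluster \(I\) of the closest pair at a gap level \(k\), chosen from a fixed finite set of levels. This only gives \(R_I\lesssim N\epsilon_{k+1}\) for a fixed \(k\). So \(R_I(t_n)\) is bounded by a fixed small number but need not tend to \(0\), even though \(r(\bm X(t_n))\to0\): the cluster \(I\) can contain sub-clusters at every scale between \(r\) and \(\epsilon_{k+1}\). Applied to such an \(I\), \cref{lem:cluster} only rules out small running minima of \(R_I^2\). That is compatible with a pair inside \(I\) colliding, because a lower bound on \(R_I\) gives no lower bound on \(r\). Your alternative closure at a running minimum of \(r\) fails for the same reason. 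First, \(\frac{\dd}{\dd t}R_I^2>0\) at that time does not contradict minimality of \(r\). Second, \(R_I\asymp r\) does not follow from a gap at level \(k\). The induction on \(|I|\) is only announced, not carried out. So the obstacle you flag at the end is real and is not resolved.

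\textbf{The missing idea.} Run the running-minimum argument on the cluster radius itself, and get the separation from maximality rather than from a scale gap. Write \(Y_I:=|I|^{-1}\sum_{i<j,\,i,j\in I}\dist(X_i,X_j)^2\), which equals \(R_I^2\) in a local lift when small. The steps are:
\begin{enumerate}
\item If a collision occurs at \(\tau<\infty\), some pair has \(\liminf_{t\uparrow\tau}\dist(X_i,X_j)=0\). Hence the family of sets \(I\) with \(\liminf_{t\uparrow\tau}Y_I(t)=0\) is nonempty. Choose such an \(I\) maximal under inclusion.
\item Since \(Y_I>0\) on \([0,\tau)\), there are running-minimum times \(t_n\uparrow\tau\) with \(Y_I(t_n)\to0\) and \(Y_I'(t_n)\le0\).
\item Suppose an exterior particle \(k\) approached \(I\) along a subsequence of these times. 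Then \(Y_{I\cup\{k\}}(t_n)\to0\), contradicting maximality. So there is a fixed \(\delta>0\) separating \(I\) from its complement at all large \(t_n\).
\item \cref{lem:cluster}, with constants depending on this fixed \(\delta\), then gives \(Y_I'(t_n)>0\) for large \(n\), a contradiction.
\end{enumerate}
This route needs no comparison between \(R_I\) and \(r\), no multiscale hierarchy, and no uniformity of constants across scales. The collision hypothesis is used only to guarantee that some collapsing set exists.
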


\begin{proof}
The vector field is locally Lipschitz on \(\mathcal D_N\), so a maximal
solution can fail to continue only by approaching the collision set.  Suppose
first that \(0<\sigma<2\) and a first collision occurs at
\(\tau<\infty\).  For \(|I|\ge2\), set
\[
 Y_I(t):=\frac1{|I|}\sum_{\substack{i<j\\i,j\in I}}
 \dist(X_i(t),X_j(t))^2.
\]
On a sufficiently small sublevel, the particles in \(I\) lie in a common
coordinate chart and \(Y_I=R_I^2\).  A collision sequence supplies at least
one set with \(\liminf_{t\uparrow\tau}Y_I(t)=0\); among all such sets, choose
one that is maximal under inclusion.  The elementary running-minimum
selection then gives times \(t_n\uparrow\tau\) such that
\[
 Y_I(t_n)\longrightarrow0,
 \qquad
 Y_I'(t_n)\le0.
\]
Maximality implies, after passing to a subsequence, that the particles in
\(I\) remain a fixed positive distance from every exterior particle at
these times; otherwise \(I\) could be enlarged.  Applying
\cref{lem:cluster} at \(t_n\) gives a strictly positive derivative for all
large \(n\), a contradiction.

Now let \(2\le\sigma<d\).  By \cref{lem:pair-energy-barrier}, sufficiently
near the collision set,
\[
 |\bm C^N|\le\frac1{2N}|\nabla\mathcal W_N|.
\]
Along \eqref{eq:pair-energy-flow},
\[
 \frac{\dd}{\dd t}\mathcal W_N(\bm X(t))
 =-\frac1N|\nabla\mathcal W_N|^2
   +\nabla\mathcal W_N\cdot\bm C^N
 \le-\frac1{2N}|\nabla\mathcal W_N|^2.
\]
To account for trajectories that enter and leave this neighborhood, choose
\(\delta>0\) so that the displayed inequality holds whenever
\(r(\bm X)<\delta\).  On every connected component of
\(\{t:r(\bm X(t))<\delta\}\), the pair energy is nonincreasing.  At an entry
time it is uniformly bounded because \(\{\bm x:r(\bm x)\ge\delta\}\) is
compact; on a component containing the initial time it is bounded by its
initial value.  Thus \(\mathcal W_N\) stays uniformly bounded along every
collision sequence, contradicting \eqref{eq:pair-barrier-limits}.  Hence no
finite-time collision occurs in either range.
\end{proof}

Having ruled out collisions, we pass from individual trajectories to the law
of the full particle configuration.  The appropriate equation is the
Liouville equation, the continuity equation for a probability density
transported by a deterministic vector field.  This is useful here because
differentiating relative entropy along a continuity equation produces the
divergence of that vector field relative to the reference measure
\(\pi^{\otimes N}\).  The next identity recognizes this relative divergence
as the off-diagonal Stein energy.

\subsection{Liouville equation}

From this point on, denote the right-hand side of
\eqref{eq:particle-system} by
\[
 B_i^N(\bm X)
 :=\frac1N\sum_{j\ne i}\mathcal A_{\pi,X_j}k(X_j,X_i),
 \qquad
 \dot X_i=B_i^N(\bm X).
\]

\begin{lemma}[Divergence identity]\label{lem:divergence}
On \(\mathcal D_N\), the vector field
\(\bm B^N=(B_1^N,\ldots,B_N^N)\) satisfies
\begin{equation}\label{eq:divergence}
 \operatorname{div}_{\bm x}\bm B^N(\bm x)
 +\sum_{i=1}^N\bpi(x_i)\cdot B_i^N(\bm x)
 =2N\F_N^\pi(\bm x).
\end{equation}
\end{lemma}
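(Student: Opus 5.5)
This is a direct computation on \(\mathcal D_N\), where \(\bm B^N\) is smooth because \(\kappa_a\) is smooth away from the origin and \(\bpi\) is \(C^1\). The plan is to expand the left-hand side of \eqref{eq:divergence} particle by particle and recognize each ordered-pair term as \(G_\pi\).

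Fix \(i\). The block \(B_i^N\) depends on \(x_i\) both through the kernel argument and through nothing else: the score \(\bpi(X_j)\) is evaluated at the source particle \(j\ne i\). Since self-interaction is excluded, the \(i\)-th block contains no term in which \(x_i\) appears in both slots. Hence
\[
 \nabla_{x_i}\cdot B_i^N(\bm x)+\bpi(x_i)\cdot B_i^N(\bm x)
 =\frac1N\sum_{j\ne i}\mathcal S_{\pi,x}\bigl[\mathcal A_{\pi,y}k(y,x)\bigr]\Big|_{x=x_i,\,y=x_j}.
\]
The operator \(\mathcal S_{\pi,x}=\nabla_x\cdot{}+\bpi(x)\cdot{}\) acts only in the target variable \(x\), with \(y=x_j\) frozen. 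By the definition \eqref{eq:G-intro}, each summand equals \(G_\pi(x_i,x_j)\).

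As a sanity check, I will also verify this against \eqref{eq:G-expansion}. Write \(z=y-x\). Then:
\begin{itemize}
\item \(\nabla_x\cdot\nabla\kappa_a(z)=-\Delta\kappa_a(z)\);
\item \(\nabla_x\cdot\bigl(\kappa_a(z)\bpi(y)\bigr)=-\bpi(y)\cdot\nabla\kappa_a(z)\);
\item the \(\bpi(x)\cdot\) terms give \(\bpi(x)\cdot\nabla\kappa_a(z)+\bpi(x)\cdot\bpi(y)\kappa_a(z)\).
\end{itemize}
Their sum reproduces \eqref{eq:G-expansion}, so \(G_\pi\) is exactly this expression at every off-diagonal point. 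No distributional interpretation is needed, because \(x_i\ne x_j\) on \(\mathcal D_N\).

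Summing over \(i\) gives
\[
 \operatorname{div}_{\bm x}\bm B^N+\sum_i\bpi(x_i)\cdot B_i^N
 =\frac1N\sum_{i\ne j}G_\pi(x_i,x_j).
\]
By \eqref{eq:F-discrete} this equals \(2N\F_N^\pi(\bm x)\). The only point needing care is bookkeeping: the divergence of the full field \(\bm B^N\) contributes only the diagonal blocks \(\nabla_{x_i}\cdot B_i^N\). The cross derivatives \(\nabla_{x_j}B_i^N\) do not enter the divergence, so differentiating \(\bpi(x_j)\) in block \(i\) never occurs. I expect no real obstacle beyond this sign and normalization check; the normalization \(\frac1{2N^2}\) in \(\F_N^\pi\) against the factor \(\frac1N\) in \(B_i^N\) yields the stated \(2N\).
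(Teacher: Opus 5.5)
Your proposal is correct and matches the paper's proof: apply \(\mathcal S_{\pi,x_i}\) to each summand of \(B_i^N\) to obtain \(G_\pi(x_i,x_j)\) by \eqref{eq:G-intro}, then sum over ordered pairs \(i\ne j\) and divide by \(N\). Your check against \eqref{eq:G-expansion} and your observation that only the diagonal blocks \(\nabla_{x_i}\cdot B_i^N\) enter the divergence are correct details that the paper leaves implicit.
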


\begin{proof}
By \eqref{eq:G-intro},
\[
 \nabla_{x_i}\cdot\mathcal A_{\pi,x_j}k(x_j,x_i)
 +\bpi(x_i)\cdot\mathcal A_{\pi,x_j}k(x_j,x_i)
 =G_\pi(x_i,x_j).
\]
Sum over \(i\ne j\) and divide by \(N\).
\end{proof}

Thus \(G_\pi\) is exactly the pairwise contribution to the compressibility of
the joint flow relative to \(\pi^{\otimes N}\).  Identity
\eqref{eq:divergence} is what
turns the Liouville transport equation into the entropy-dissipation identity
in \cref{prop:entropy-identity}.

If \(P_t^N\) has density \(p_t^N\), it solves
\begin{equation}\label{eq:liouville}
 \partial_t p_t^N+\operatorname{div}_{\bm x}(p_t^N\bm B^N)=0
\end{equation}
on \(\mathcal D_N\).  No boundary condition is imposed at collisions: by
\cref{prop:no-collision}, the flow never reaches that boundary.

\section{Joint entropy and Onsager renormalization}\label{sec:entropy}

We now turn the deterministic flow into a dissipative estimate for its joint
law.  The calculation has two parts.  The Liouville equation gives an exact
identity with \(\F_N^\pi\) as entropy production.  Since this off-diagonal
quantity can be negative, the direct empirical liminf in
\cref{prop:empirical-liminf} supplies a qualitative vanishing correction.
For \(0<\sigma<2\), the Bessel-minorant argument of
\cref{sec:quantitative-onsager} gives its explicit rate.

\subsection{Exact entropy production}

The entropy identity below is the structural ingredient inherited from the
smooth-kernel joint-law method.  The work here is to justify it despite the
singular collision set; the no-collision theorem and the approximation in
\cref{app:entropy-approx} provide that justification.

\begin{proposition}[Entropy identity]\label{prop:entropy-identity}
Let \(P_0^N\ll\pi^{\otimes N}\) have finite relative entropy, and let
\(P_t^N=(\Phi_t^N)_\#P_0^N\), where \(\Phi_t^N\) is the collision-free flow of
\eqref{eq:particle-system}.  Then \eqref{eq:entropy-identity} holds for every
\(t\ge0\).
\end{proposition}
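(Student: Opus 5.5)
The plan is to prove \eqref{eq:entropy-identity} by following the relative density along trajectories: first a pointwise Lagrangian identity, then a truncation-and-limit argument. By \cref{prop:no-collision}, the flow \(\Phi_t^N\) is a global diffeomorphism of \(\mathcal D_N\); it is smooth because \(\bm B^N\) is \(C^1\) on \(\mathcal D_N\), which uses \(V\in W^{2,\infty}\) and the smoothness of \(\kappa_a\) off the origin. The collision set has \(\pi^{\otimes N}\)-measure zero, so \(P_0^N\) is carried by \(\mathcal D_N\). Write \(f_t:=\dd P_t^N/\dd\pi^{\otimes N}\) and \(\Pi(\bm x):=\prod_i\pi(x_i)\).

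The change-of-variables formula and Liouville's formula for the Jacobian give
\[
 f_t(\Phi_t^N(\bm x))\,\Pi(\Phi_t^N(\bm x))\,J_t(\bm x)=f_0(\bm x)\Pi(\bm x),
 \qquad
 \partial_tJ_t=(\operatorname{div}\bm B^N)(\Phi_t^N)\,J_t.
\]
Differentiating \(\log\Pi\) along the flow yields \(\sum_i\bpi(x_i)\cdot B_i^N\). Combining this with \cref{lem:divergence}, the relative density satisfies, along each trajectory,
\[
 \log f_t(\Phi_t^N(\bm x))=\log f_0(\bm x)-2N\int_0^t\F_N^\pi(\Phi_s^N(\bm x))\,\dd s .
\]
Integrating against \(P_0^N\) and using \(P_s^N=(\Phi_s^N)_\#P_0^N\) together with Fubini, we get \(N H_N(t)=NH_N(0)-2N\int_0^t\E_{P_s^N}\F_N^\pi\,\dd s\). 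Dividing by \(N\) gives \eqref{eq:entropy-identity}. This Lagrangian route avoids differentiating \(\int f_t\log f_t\) in time and avoids any boundary terms at the collision set.

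\textbf{Main obstacle: integrability.} The formal computation needs Fubini and finiteness, namely \(\int_0^t\E_{P_s^N}|\F_N^\pi|\,\dd s<\infty\). By \cref{prop:local-expansion}, \(\F_N^\pi\) is bounded below on \(\mathcal D_N\); indeed \(G_\pi\) is positive near the diagonal and bounded away from it. So only the positive part needs control. I would first work with truncated data. Take \(P_0^N\) whose density is bounded and supported in \(K_\delta:=\{r(\bm x)\ge\delta\}\). On \([0,t]\) the trajectories from such a compact set stay in some \(K_{\delta'}\), by continuity of the global flow on a compact set. There \(\F_N^\pi\) is bounded, so the identity holds rigorously. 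For general \(P_0^N\), approximate \(f_0\) by \(f_0^{(n)}:=\mathbf 1_{K_{1/n}}\min(f_0,n)\), renormalized; this is the approximation deferred to \cref{app:entropy-approx}. Monotone convergence applied to the positive part of \(\F_N^\pi\) handles the dissipation term, since the negative part is bounded. Convergence \(H^{(n)}(0)\to H_N(0)\) follows from dominated convergence, using finiteness of \(\Ent(P_0^N\mid\pi^{\otimes N})\).

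For the left side, the entropy at time \(t\) is lower semicontinuous under the weak convergence \(P^{(n)}_t\to P_t^N\). This gives the inequality "\(\le\)" in the limit. The reverse inequality comes directly from the pointwise trajectory identity, which holds \(P_0^N\)-a.e. without truncation. Its integral is justified once the dissipation is known to be integrable, which the inequality just proved provides. Equality then follows.
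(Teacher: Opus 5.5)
Your proof is correct. It shares the paper's ingredients (the Jacobian/Liouville calculation along trajectories, \cref{lem:divergence}, the lower bound on $\F_N^\pi$, monotone convergence for the energy) but organizes the limit differently. The paper (\cref{app:entropy-approx}) conditions $P_0^N$ on flow-dependent sets $A_m$, where $m^{-1}\le h_0\le m$ and trajectories stay $m^{-1}$-separated on $[0,t]$. It proves the integrated identity for each conditioned law and then passes all three terms to the limit. The energy is handled by monotone convergence after the shift by $C_G/2$. Both endpoint entropies are handled by the conditional-entropy formula; at time $t$ this needs lower semicontinuity first, to get finiteness, and injectivity of $\Phi_t^N$, so that $P_t^{N,m}$ is $P_t^N$ conditioned on $\Phi_t^N(A_m)$. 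You instead prove the trajectory identity $\log f_t\circ\Phi_t^N=\log f_0-2N\int_0^t\F_N^\pi\circ\Phi_s^N\,\dd s$ for the untruncated law. Truncation and lower semicontinuity then serve only to certify integrability of the dissipation, and equality follows by integrating a pointwise identity. This is slightly more economical, since you never need convergence of the terminal entropies.

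In fact the truncation can be dropped entirely. Rewrite the trajectory identity with the nonnegative shifted energy $\F_N^\pi+C_0/2$ (where $G_\pi\ge-C_0$), and use
$\int(\log f_t)^-\,\dd P_t^N=\int f_t(\log f_t)^-\,\dd\pi^{\otimes N}\le e^{-1}$.
Tonelli's theorem then gives
$\int_0^t\E_{P_s^N}\bigl(\F_N^\pi+C_0/2\bigr)\,\dd s\le\frac1{2N}\bigl(\Ent(P_0^N\mid\pi^{\otimes N})+NC_0t+e^{-1}\bigr)$,
after which every term is integrable and the identity follows directly.

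Two minor inaccuracies do not affect the argument. First, $\Phi_t^N$ is a $C^1$ diffeomorphism of $\mathcal D_N$ onto an open subset, not necessarily onto $\mathcal D_N$, since backward trajectories may collide; so $f_t$ simply vanishes off $\Phi_t^N(\mathcal D_N)$. Second, the $C^1$ regularity of $\bm B^N$ needed for Liouville's Jacobian formula comes from $V\in H^m\hookrightarrow C^2$ under \cref{ass:target}. The hypothesis $V\in W^{2,\infty}$ alone only makes $\bpi$ Lipschitz.
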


\begin{proof}
Assume first that \(p_t^N\) is smooth and that its support stays a positive
distance from the collision set on the time interval under consideration.
Mass conservation removes the derivative of the linear part of the entropy,
and integration by parts in \eqref{eq:liouville} gives
\begin{align*}
\frac{\dd}{\dd t}\Ent(P_t^N\mid\pi^{\otimes N})
&=\int p_t^N\bm B^N\cdot
 \nabla_{\bm x}\log\frac{p_t^N}{\pi^{\otimes N}}\,\dd\bm x\\
&=-\int p_t^N\left(
 \operatorname{div}_{\bm x}\bm B^N+
 \sum_i\bpi(x_i)\cdot B_i^N\right)\dd\bm x\\
&=-2N\,\E_{P_t^N}\F_N^\pi,
\end{align*}
where the last equality is \eqref{eq:divergence}.  Divide by \(N\) and
integrate in time.

For a general finite-entropy initial law, localize to sets on which the
density ratio is bounded and trajectories remain uniformly separated from
collisions.  The same calculation applies on each localized flow tube.
The exhaustion argument, including passage of the shifted energy and both
endpoint entropies to the limit, is given in \cref{app:entropy-approx}.
\end{proof}

\subsection{Renormalized dissipation}

The entropy identity alone is insufficient because its production term is
the off-diagonal energy, not the nonnegative continuum KSD.  The empirical
liminf in \cref{sec:liminf} supplies the vanishing correction throughout
\(0<\sigma<d\); \cref{sec:quantitative-onsager} later gives its explicit rate
for \(0<\sigma<2\).

\begin{proof}[Proof of \cref{thm:entropy}]
Global existence is \cref{prop:no-collision}, and the exact identity is
\cref{prop:entropy-identity}.  The qualitative correction
\(c_N\to0\) and \(\mathcal E_N^\pi\ge0\) are proved in
\cref{prop:qualitative-correction}.  Since relative entropy is nonnegative,
\eqref{eq:entropy-identity} then gives
\[
 \frac1T\int_0^T\E\mathcal E_N^\pi\,\dd t
 =\frac{H_N(0)-H_N(T)}{2T}+c_N
 \le\frac{H_N(0)}{2T}+c_N.
\]
\end{proof}

\section{Passage to continuum KSD}\label{sec:liminf}

Estimate \eqref{eq:renormalized-dissipation} controls a scalar particle
energy.  To obtain convergence of measures, we must show that this energy
cannot disappear in the many-particle limit.  We consider laws of the
empirical measure \(\mu_{\bm x}^N\) defined in the introduction.
Compactness of \(\Pp(\T^d)\) gives subsequential limits automatically.  The
work is to pass the singular off-diagonal energy to those limits.  We do this
by capping \(G_\pi\) at a finite height, passing the resulting bounded
continuous kernel to the limit, and then raising the cap.  The finite
diagonal introduced by the cap costs only \(L/(2N)\).  Since this comparison
holds for every configuration law, it also applies when the averaging
horizon depends on \(N\).

\subsection{A capped-diagonal representation}

By \cref{prop:local-expansion}, \(G_\pi(x,y)\to+\infty\) uniformly as
\(\dist(x,y)\to0\).  Since it is continuous away from the diagonal, it is
also bounded below on \(\T^d\times\T^d\setminus\diag\), where
\(\diag:=\{(x,x):x\in\T^d\}\); fix
\(C_0\) such that \(G_\pi\ge-C_0\).  For \(L\ge1\), define the capped kernel
\[
 G_\pi^{(L)}(x,y)
 :=
 \begin{cases}
  \min\{G_\pi(x,y),L\},&x\ne y,\\
  L,&x=y,
 \end{cases}
\]
and
\begin{equation}\label{eq:F-cap}
 \F_L^\pi(\mu)
 :=\frac12\iint G_\pi^{(L)}(x,y)\,\dd\mu(x)\,\dd\mu(y).
\end{equation}
The value on the diagonal makes \(G_\pi^{(L)}\) bounded and continuous on
the full product space.  Thus \(\F_L^\pi\) is continuous under weak
convergence.  The family increases with \(L\), and
\(\F_L^\pi\ge-C_0/2\).

\begin{lemma}[Capped-diagonal characterization]
\label{lem:cutoff-characterization}
The lower-semicontinuous Gram energy satisfies
\begin{equation}\label{eq:F-cutoff-limit}
 \F^\pi(\mu)=\lim_{L\to\infty}\F_L^\pi(\mu)
 \quad\text{for every }\mu\in\Pp(\T^d).
\end{equation}
In particular, an atom has infinite energy, and \(\F^\pi\) is lower
semicontinuous.
\end{lemma}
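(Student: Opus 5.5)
The plan is to define \(\widetilde\F(\mu):=\lim_{L\to\infty}\F_L^\pi(\mu)=\sup_L\F_L^\pi(\mu)\) and show that it coincides with the lower-semicontinuous relaxation of the smooth Gram energy. The limit exists in \((-\infty,+\infty]\) because \(\F_L^\pi\) is increasing in \(L\) and bounded below by \(-C_0/2\).

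\emph{Lower semicontinuity of \(\widetilde\F\).} Each \(\F_L^\pi\) is weakly continuous, since \(G_\pi^{(L)}\) is bounded and continuous on \(\T^d\times\T^d\). This continuity holds because \(G_\pi\to+\infty\) uniformly at the diagonal by \cref{prop:local-expansion}, so the cap equals \(L\) in a whole neighbourhood of \(\diag\). A supremum of continuous functionals is lower semicontinuous.

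\emph{Agreement on smooth measures.} For smooth \(\mu=\rho\,\dd x\), the kernel \(G_\pi\) is locally integrable, because its singularity is \(\dist^{-\sigma}\) with \(\sigma<d\). Monotone convergence applied to \(G_\pi^{(L)}+C_0\ge0\), together with finiteness of \(\iint(G_\pi+C_0)\rho\rho\), gives \(\F_L^\pi(\mu)\to\frac12\iint G_\pi\rho\rho\). By the remark following \cref{prop:gram}, this equals the Gram energy.

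\emph{Identification with the relaxation.} Let \(\overline\F\) denote the relaxation, that is, the largest lsc functional lying below the Gram energy on smooth measures. Since \(\widetilde\F\) is lsc and agrees with the Gram energy on smooth measures, we get \(\widetilde\F\le\overline\F\).

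For the reverse inequality, fix \(\mu\) and take the mollifications \(\mu_\varepsilon=\mu*\eta_\varepsilon\), where \(\eta_\varepsilon\) is a smooth, symmetric, positive mollifier. It suffices to show \(\limsup_\varepsilon\F^\pi(\mu_\varepsilon)\le\widetilde\F(\mu)\). This is the main obstacle, because \(G_\pi\) is not translation invariant; the target enters through \(\bpi\).

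I would split \(G_\pi\) using \eqref{eq:G-expansion}. The leading part is \(-\Delta\kappa_a(y-x)\), a translation-invariant kernel whose Fourier coefficients \((2\pi|\ell|)^{2-2a}\) are nonnegative. The remainder \(R_\pi\) has strictly weaker singularity (\(O(r^{2-\sigma})\) or \(O(r^{1-\sigma})\)); its structure \((\bpi(x)-\bpi(y))\cdot\nabla\kappa_a\) makes it \(O(r^{2-\sigma})\), since \(\bpi\) is Lipschitz.

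For the leading part, the energy in Fourier is \(\sum_\ell\widehat{(-\Delta\kappa_a)}(\ell)|\widehat\mu(\ell)|^2|\widehat\eta(\varepsilon\ell)|^2\). This is bounded by the unmollified sum, which is finite or infinite, because \(|\widehat\eta|\le1\). The unmollified sum equals the capped limit for this translation-invariant piece, since capping and Fourier summation both converge monotonically to the same value: apply the capped construction to the leading kernel alone, use positive-definiteness, and use Fatou.

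For the remainder, if \(\widetilde\F(\mu)<\infty\), then \(\mu\) has no atoms and \(\iint\dist^{-\sigma}\dd\mu\dd\mu<\infty\). This follows from \eqref{eq:G-positive-near} and \(G_\pi^{(L)}\ge\min\{\tfrac{c_0}2\dist^{-\sigma},L\}-C\). With that integrability, the weaker kernel \(R_\pi\) is uniformly integrable against \(\mu_\varepsilon\otimes\mu_\varepsilon\), so its energy converges. If instead \(\widetilde\F(\mu)=\infty\), there is nothing to prove.

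Combining these, \(\overline\F(\mu)\le\liminf_\varepsilon\F^\pi(\mu_\varepsilon)\le\widetilde\F(\mu)\). Here the leading and remainder pieces must be matched consistently in the capped limit, and this is where care is needed.

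\emph{Atoms.} If \(\mu\) has an atom of mass \(m>0\) at \(x_0\), then \(\F_L^\pi(\mu)\ge\tfrac12 Lm^2-C_0/2\to\infty\), so \(\F^\pi(\mu)=+\infty\). Lower semicontinuity was established in the first step.
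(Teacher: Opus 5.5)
Your proposal follows the paper's proof essentially step for step. First, \(\sup_L\F_L^\pi\) is lower semicontinuous and agrees with the Gram energy on smooth measures (local integrability plus centering), which gives \(\sup_L\F_L^\pi\le\F^\pi\). The reverse inequality then comes from mollifying: \(G_0=-\Delta\kappa_a\) is handled through its nonnegative Fourier coefficients, and the target-dependent remainder through uniform integrability from the finite Riesz \(\sigma\)-energy. Atoms cost \(\tfrac12Lm^2\). Using a general symmetric mollifier with \(|\widehat\eta|\le1\) instead of the heat kernel changes nothing.

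The one step whose stated justification points the wrong way is identifying \(\sum_\ell\widehat G_0(\ell)|\widehat\mu(\ell)|^2\) with \(\iint G_0\,\dd\mu\,\dd\mu\). Fatou, i.e.\ lower semicontinuity of \(G_0\) along \(\mu_\varepsilon\otimes\mu_\varepsilon\rightharpoonup\mu\otimes\mu\), only gives \(\iint G_0\,\dd\mu\,\dd\mu\le\sum_\ell\widehat G_0(\ell)|\widehat\mu(\ell)|^2\). Your argument needs the reverse inequality in two places: for the upper bound on the mollified principal energies, and for the uniform-in-\(\varepsilon\) \(\sigma\)-energy bound behind the uniform integrability of \(R_\pi\). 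The paper asserts this point just as briefly, but it does require an argument.

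One option is the subordination formula \(G_0=\Gamma(s/2)^{-1}\int_0^\infty t^{s/2-1}(p_t-1)\,\dd t\), with \(s=2a-2\), combined with Tonelli (using \(p_t\ge0\)). Simpler is domination, which also makes the principal/remainder split unnecessary. For \(x\ne y\) you have \(|G_\pi(x,y)|\le C(1+\dist(x,y)^{-\sigma})\), and since \(\sigma<d\), averages of \(\dist^{-\sigma}\) over \(\varepsilon\)-balls are bounded by \(C\dist^{-\sigma}\) uniformly in \(\varepsilon\). Write \(\iint G_\pi\,\dd\mu_\varepsilon\,\dd\mu_\varepsilon\) as the \(\mu\otimes\mu\)-integral of the doubly averaged kernel, and apply dominated convergence together with \(\mu\otimes\mu(\diag)=0\). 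When \(\sup_L\F_L^\pi(\mu)<\infty\), this gives \(\F^\pi(\mu_\varepsilon)\to\frac12\iint G_\pi\,\dd\mu\,\dd\mu=\sup_L\F_L^\pi(\mu)\) directly.
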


\begin{proof}
Set \(I(\mu):=\sup_L\F_L^\pi(\mu)\).  For a smooth density,
\(G_\pi\) is locally integrable because \(\sigma<d\), and monotone
convergence after adding \(C_0\), together with \eqref{eq:centering}, gives
\(I(\mu)=\F^\pi(\mu)\).  Moreover,
\(\F_L^\pi\le\F^\pi\) on smooth measures.  Since \(\F_L^\pi\) is continuous,
the definition of \(\F^\pi\) as the lower-semicontinuous relaxation of the
smooth Gram form yields
\begin{equation}\label{eq:cap-below-gram}
 I(\mu)\le\F^\pi(\mu)
 \qquad\text{for every }\mu\in\Pp(\T^d).
\end{equation}

It remains to prove the reverse inequality when \(I(\mu)<\infty\).  The local
expansion and the lower bound away from the diagonal imply
\[
 G_\pi(x,y)
 \ge c\,\dist(x,y)^{-\sigma}
   \mathbf 1_{\{\dist(x,y)<r_0\}}-C.
\]
Thus \(\mu\) has finite Riesz \(\sigma\)-energy.  Let
\(\mu_\varepsilon=p_\varepsilon*\mu\), where \(p_\varepsilon\) is the
periodic heat kernel.  The nonnegative Fourier series of
\(G_0=-\Delta\kappa_a\) shows both that the principal energies of
\(\mu_\varepsilon\) are uniformly bounded and that they converge to the
principal energy of \(\mu\).

The target-dependent remainder has strictly smaller singular order.  More
precisely, by \eqref{eq:R-bound}, for some \(\tau<\sigma\),
\[
 |R_\pi(x,y)|\le C\bigl(1+\dist(x,y)^{-\tau}\bigr),
\]
where a logarithm is dominated by any positive power.  For \(\delta<r_0\),
\[
 \dist(x,y)^{-\tau}\mathbf 1_{\{\dist(x,y)<\delta\}}
 \le
 \delta^{\sigma-\tau}\dist(x,y)^{-\sigma}.
\]
The uniform principal-energy bound therefore makes the remainder uniformly
integrable under \(\mu_\varepsilon\otimes\mu_\varepsilon\).  Away from the
diagonal it is continuous, so weak convergence and then
\(\delta\downarrow0\) give convergence of the remainder integrals.  Hence
\[
 \F^\pi(\mu_\varepsilon)
 \longrightarrow
 \frac12\iint G_\pi(x,y)\,\dd\mu(x)\,\dd\mu(y)
 =I(\mu),
\]
where the last identity is monotone convergence for \(G_\pi+C_0\).  The
definition of the relaxation now gives
\(\F^\pi(\mu)\le\liminf_{\varepsilon\downarrow0}
\F^\pi(\mu_\varepsilon)=I(\mu)\).  Together with
\eqref{eq:cap-below-gram}, this proves \eqref{eq:F-cutoff-limit} when
\(I(\mu)<\infty\).  If \(I(\mu)=\infty\), then
\eqref{eq:cap-below-gram} already forces \(\F^\pi(\mu)=\infty\).

If \(\mu\{x\}=m>0\), then
\(\F_L^\pi(\mu)\ge \frac12Lm^2-C_0/2\), which proves the assertion about
atoms.  Finally, \(\F^\pi\) is a monotone supremum of the continuous
functionals \(\F_L^\pi\), after the fixed shift \(C_0/2\), and is therefore
lower semicontinuous.
\end{proof}

\subsection{Microscopic-to-continuum lower bound}

The next proposition is the bridge from particle dissipation to continuum
dissipation.  Its lower-bound direction is especially important: it prevents
microscopic concentration near the singular diagonal from creating an
artificially small limiting energy.

\begin{proposition}[Empirical liminf]\label{prop:empirical-liminf}
Let \(Q^N\in\Pp(\mathcal D_N)\) and let
\[
 \Lambda_N:=(\bm x\mapsto\mu_{\bm x}^N)_\#Q^N
 \in\Pp(\Pp(\T^d)).
\]
If \(\Lambda_N\rightharpoonup\Lambda\), then
\begin{equation}\label{eq:empirical-liminf}
 \int_{\Pp(\T^d)}\F^\pi(\mu)\,\dd\Lambda(\mu)
 \le
 \liminf_{N\to\infty}
 \int_{\mathcal D_N}\F_N^\pi(\bm x)\,\dd Q^N(\bm x).
\end{equation}
\end{proposition}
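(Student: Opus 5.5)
The plan is to compare the off-diagonal particle energy with the capped functional \(\F_L^\pi\) at the level of a single configuration, pass to the limit in \(N\) for fixed cap \(L\) using weak continuity, and finally let \(L\to\infty\) by monotone convergence together with \cref{lem:cutoff-characterization}.

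\emph{Step 1: pointwise comparison.} Fix \(L\ge1\) and \(\bm x\in\mathcal D_N\). Since \(G_\pi^{(L)}\le G_\pi\) off the diagonal and \(G_\pi^{(L)}=L\) on it,
\[
 \F_L^\pi(\mu_{\bm x}^N)
 =\frac1{2N^2}\sum_{i\ne j}G_\pi^{(L)}(x_i,x_j)+\frac{L}{2N}
 \le \F_N^\pi(\bm x)+\frac{L}{2N}.
\]
Integrating against \(Q^N\) gives
\[
 \int_{\Pp(\T^d)}\F_L^\pi\,\dd\Lambda_N
 \le\int_{\mathcal D_N}\F_N^\pi\,\dd Q^N+\frac{L}{2N}.
\]
The right-hand integral is well defined in \((-\infty,+\infty]\) because \(\F_N^\pi\ge -C_0/2\).

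\emph{Step 2: passage in \(N\) at fixed cap.} Since \(G_\pi^{(L)}\) is bounded and continuous on \(\T^d\times\T^d\), the map \(\mu\mapsto\F_L^\pi(\mu)\) is bounded and continuous on \(\Pp(\T^d)\) with the weak topology. Hence \(\Lambda_N\rightharpoonup\Lambda\) gives \(\int\F_L^\pi\,\dd\Lambda_N\to\int\F_L^\pi\,\dd\Lambda\). Combined with Step 1, this yields
\[
 \int\F_L^\pi\,\dd\Lambda\le\liminf_{N\to\infty}\int\F_N^\pi\,\dd Q^N
\]
for every \(L\).

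\emph{Step 3: removal of the cap.} By \cref{lem:cutoff-characterization}, \(\F_L^\pi(\mu)\uparrow\F^\pi(\mu)\) for every \(\mu\), and the family is uniformly bounded below by \(-C_0/2\). Monotone convergence applied to \(\F_L^\pi+C_0/2\) then gives \(\int\F_L^\pi\,\dd\Lambda\to\int\F^\pi\,\dd\Lambda\), including the case where this integral is \(+\infty\). This proves \eqref{eq:empirical-liminf}.

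The only delicate point is Step 1. It needs the cap to be a genuine minorant of \(G_\pi\) off the diagonal, and it needs the diagonal value to contribute exactly \(L/(2N)\), which vanishes for fixed \(L\). The remaining ingredients, namely the lower bound \(G_\pi\ge-C_0\) and the continuity of the capped kernel, are already supplied by \cref{prop:local-expansion} and the construction preceding \cref{lem:cutoff-characterization}.
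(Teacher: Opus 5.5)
Your proposal is correct and matches the paper's proof: the pointwise comparison \(\F_N^\pi(\bm x)\ge\F_L^\pi(\mu_{\bm x}^N)-L/(2N)\) from the capped minorant, the passage \(N\to\infty\) at fixed \(L\) using boundedness and weak continuity of \(\F_L^\pi\), and removal of the cap by monotone convergence applied to \(\F_L^\pi+C_0/2\) together with \cref{lem:cutoff-characterization}. Your added remark that \(\int\F_N^\pi\,\dd Q^N\) is well defined because \(\F_N^\pi\ge -C_0/2\) is a small but welcome detail that the paper leaves implicit.
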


\begin{proof}
 Fix \(L\ge1\).  Since \(G_\pi^{(L)}\le G_\pi\) off the diagonal,
 \begin{align*}
  \F_N^\pi(\bm x)
  &\ge
  \frac1{2N^2}\sum_{i\ne j}G_\pi^{(L)}(x_i,x_j)\\
  &=\F_L^\pi(\mu_{\bm x}^N)-\frac{L}{2N}.
 \end{align*}
 The last term is exactly the diagonal contribution of the capped kernel.
 Therefore
 \[
  \liminf_{N\to\infty}\int\F_N^\pi\,\dd Q^N
  \ge
  \lim_{N\to\infty}\int\F_L^\pi(\mu)\,\dd\Lambda_N(\mu)
  =
  \int\F_L^\pi(\mu)\,\dd\Lambda(\mu),
 \]
 where \(L/(2N)\to0\), and the last equality uses continuity and boundedness
 of \(\F_L^\pi\).  Let \(L\to\infty\) and apply monotone convergence after
 adding \(C_0/2\).
\end{proof}

\begin{proposition}[Qualitative correction]\label{prop:qualitative-correction}
For every \(0<\sigma<d\), the numbers \(m_N,c_N\) defined in
\eqref{eq:qualitative-correction} satisfy
\[
 -\infty<m_N\le0,
 \qquad
 c_N\longrightarrow0.
\]
Consequently, \(\mathcal E_N^\pi=\F_N^\pi+c_N\ge0\).
\end{proposition}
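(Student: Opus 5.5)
We need three facts: \(m_N>-\infty\), \(m_N\le0\), and \(c_N\to0\). The last assertion, \(\mathcal E_N^\pi\ge0\), is then immediate from the definition of \(c_N=(-m_N)_+\).

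\emph{Finiteness.} As in the construction of the cap, \cref{prop:local-expansion} together with continuity of \(G_\pi\) off the diagonal gives \(G_\pi\ge-C_0\) on \(\T^d\times\T^d\setminus\diag\). Summing over the \(N(N-1)\) ordered pairs yields
\[
 \F_N^\pi\ge-\frac{(N-1)C_0}{2N}\ge-\frac{C_0}{2},
\]
so \(m_N>-\infty\).

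\emph{Upper bound \(m_N\le0\).} I would average \(\F_N^\pi\) against \(\pi^{\otimes N}\), which charges \(\mathcal D_N\) fully. Each off-diagonal term gives \(\iint G_\pi\,\dd\pi\,\dd\pi\), and this vanishes by the centering identity \eqref{eq:centering}. That identity is distributional, but since \(\sigma<d\) the kernel \(G_\pi\) is locally integrable. So the integral against the smooth density \(\pi\otimes\pi\) is a genuine Lebesgue integral, and it equals the distributional pairing. Hence \(\E_{\pi^{\otimes N}}\F_N^\pi=0\), and the infimum satisfies \(m_N\le0\).

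\emph{Vanishing of \(c_N\).} Argue by contradiction. Suppose \(c_{N_k}\ge\eta>0\) along a subsequence. Choose configurations \(\bm x^{(k)}\in\mathcal D_{N_k}\) with \(\F_{N_k}^\pi(\bm x^{(k)})\le-\eta/2\), and set \(Q^{N_k}=\delta_{\bm x^{(k)}}\). The empirical-measure laws \(\Lambda_{N_k}=\delta_{\mu^{N_k}_{\bm x^{(k)}}}\) live in \(\Pp(\Pp(\T^d))\), which is compact. Passing to a further subsequence, \(\Lambda_{N_k}\rightharpoonup\Lambda\); one can even take \(\Lambda=\delta_\mu\) via compactness of \(\Pp(\T^d)\). \Cref{prop:empirical-liminf}, applied along this subsequence, gives
\[
 \int\F^\pi\,\dd\Lambda\le-\frac{\eta}{2}.
\]
The proof of that proposition works verbatim along subsequences.

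It remains to see that \(\F^\pi\ge0\) on all of \(\Pp(\T^d)\), which contradicts the bound above. By \cref{prop:gram}, \(\F^\pi\ge0\) on smooth measures. By definition, \(\F^\pi\) is the lower-semicontinuous relaxation of this form, so nonnegativity passes to every \(\mu\): the relaxed value is a liminf of nonnegative numbers.

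\emph{Main obstacle.} The only delicate point is the justification that \(\E_{\pi^{\otimes N}}\F_N^\pi=0\), that is, converting distributional centering into a pointwise integral. I would handle it by capping. Use \(G_\pi+C_0\ge0\) with monotone convergence to identify \(\iint G_\pi\,\dd\pi\,\dd\pi\) with \(\lim_L\iint G_\pi^{(L)}\,\dd\pi\,\dd\pi=2\F^\pi(\pi)\) (\cref{lem:cutoff-characterization}). Then use that \(\F^\pi(\pi)=0\), since the feature generated by \(\pi\) vanishes in the Gram representation \eqref{eq:gram}.
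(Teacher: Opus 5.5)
Your proof is correct and follows the paper's argument essentially step for step. It uses the lower bound \(G_\pi\ge-C_0\) for finiteness, averaging against \(\pi^{\otimes N}\) with Stein centering for \(m_N\le0\), and a contradiction argument for \(c_N\to0\): approximate minimizers, compactness, and \cref{prop:empirical-liminf} with \(Q^N=\delta_{\bm x^N}\). Your extra justifications only fill in points the paper states in a single line: that the centering pairing is a genuine Lebesgue integral (via monotone convergence of the caps and \(\F^\pi(\pi)=0\)), and that nonnegativity passes to the lower-semicontinuous relaxation.
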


\begin{proof}
The global off-diagonal lower bound \(G_\pi\ge-C_0\) gives
\(m_N\ge-C_0/2\).  On the other hand, if \(X_1,\ldots,X_N\) are independent
with law \(\pi\), local integrability and Stein centering give
\[
 \E\F_N^\pi(X_1,\ldots,X_N)
 =\frac{N(N-1)}{2N^2}
   \iint G_\pi(x,y)\,\dd\pi(x)\,\dd\pi(y)
 =0.
\]
Hence \(m_N\le0\).

If \(c_N\not\to0\), then along a subsequence \(c_N\ge2\varepsilon\) for some
\(\varepsilon>0\).  Since \(m_N=-c_N\), choose approximate minimizers
\(\bm x^N\in\mathcal D_N\) such that
\[
 \F_N^\pi(\bm x^N)\le m_N+\varepsilon\le-\varepsilon.
\]
After extracting once more,
\(\mu_{\bm x^N}^N\rightharpoonup\mu\).  Apply
\cref{prop:empirical-liminf} with \(Q^N=\delta_{\bm x^N}\):
\[
 0\le\F^\pi(\mu)
 \le\liminf_{N\to\infty}\F_N^\pi(\bm x^N)
 \le-\varepsilon,
\]
a contradiction.  Thus \(c_N\to0\), and nonnegativity follows from its
definition.
\end{proof}

\begin{remark}[Why the comparison is uniform in the horizon]
The lower-bound direction replaces \(G_\pi\) by a pointwise smaller capped
kernel and pays only its explicit diagonal term.  The lower-order,
possibly sign-indefinite part of \(G_\pi\) is never separated from the full
kernel.  No time horizon enters \eqref{eq:empirical-liminf}; this is precisely
what permits the simultaneous limit in \cref{thm:ordered}.
\end{remark}

\subsection{Time-averaged empirical laws}

For \(T>0\), define
\begin{equation}\label{eq:Lambda-NT}
 \Lambda_{N,T}
 :=\frac1T\int_0^T
 (\bm x\mapsto\mu_{\bm x}^N)_\#P_t^N\,\dd t.
\end{equation}
Its barycenter is exactly the measure in \eqref{eq:tagged-average-intro}:
\begin{equation}\label{eq:barycenter}
 \int_{\Pp(\T^d)}\mu\,\dd\Lambda_{N,T}(\mu)
 =\overline P_{\mathrm{av},T}^{N}.
\end{equation}

\section{Target identification and long-time sampling}
\label{sec:identification}

The preceding section passes the off-diagonal particle energy to every weak
limit of empirical-measure laws.  It remains to identify the zero set of the
continuum energy.  The Riesz Fourier multiplier identifies \(\F^\pi\) with a
negative Sobolev norm of \(\mu-\pi\), so \(\pi\) is its unique zero.

\subsection{The zero set of the Riesz KSD}

The following coercive identification is the point at which the Riesz Fourier
structure matters globally.

\begin{proposition}[Negative-Sobolev equivalence]\label{prop:identification}
Under \cref{ass:target}, there are constants \(0<c\le C<\infty\) such that
for every \(\mu\in\Pp(\T^d)\),
\begin{equation}\label{eq:sobolev-equivalence}
 c\|\mu-\pi\|_{\dot H^{1-a}}^2
 \le 2\F^\pi(\mu)
 \le C\|\mu-\pi\|_{\dot H^{1-a}}^2,
\end{equation}
with the convention that both sides may be infinite.  Consequently
\begin{equation}\label{eq:unique-zero}
 \F^\pi(\mu)=0\quad\Longleftrightarrow\quad\mu=\pi.
\end{equation}
\end{proposition}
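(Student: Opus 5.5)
Start from the Gram representation of \cref{prop:gram} for smooth \(\mu=\rho\,\dd x\): \(2\F^\pi(\mu)=\sum_{\ell\ne0}\widehat\kappa_a(\ell)|\widehat q_\rho(\ell)|^2\) with \(q_\rho=\bpi\rho-\nabla\rho=-\pi\nabla(\rho/\pi)\).

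Set \(f:=\rho-\pi\). Since \(\bpi\pi-\nabla\pi=0\), we have \(q_\rho=q_f=-\pi\nabla(f/\pi)\). Because \(\widehat\kappa_a(\ell)\asymp|\ell|^{-2a}\), this gives \(2\F^\pi(\mu)\asymp\|\pi\nabla(f/\pi)\|_{\dot H^{-a}}^2\), where the zero mode of the vector field is simply dropped. The proof therefore reduces to the two-sided bound
\[
 \|\pi\nabla(f/\pi)\|_{\dot H^{-a}}\asymp\|f\|_{\dot H^{1-a}}
\]
for mean-zero \(f\). Once this holds for smooth \(f\), both inequalities in \eqref{eq:sobolev-equivalence} extend to all of \(\Pp(\T^d)\). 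Take \(\mu_\varepsilon=p_\varepsilon*\mu\). Then \(\|\mu_\varepsilon-\pi\|_{\dot H^{1-a}}\to\|\mu-\pi\|_{\dot H^{1-a}}\) by monotone convergence of the Fourier multipliers, and \(\F^\pi(\mu_\varepsilon)\to\F^\pi(\mu)\). The latter convergence follows from \cref{lem:cutoff-characterization}: the upper semicontinuity direction along heat smoothing is exactly the step proved there, and if the energy is infinite the lower semicontinuity of \(\F^\pi\) suffices. Finally, \eqref{eq:unique-zero} follows because \(\|\mu-\pi\|_{\dot H^{1-a}}=0\) forces all nonzero Fourier coefficients of \(\mu-\pi\) to vanish, and the zero modes agree since both are probability measures.

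\textbf{Upper bound.} Write \(\pi\nabla(f/\pi)=\nabla f-\bpi f\). The gradient satisfies \(\|\nabla f\|_{\dot H^{-a}}\le C\|f\|_{\dot H^{1-a}}\). For the second term, multiplication by \(\bpi\) is bounded on \(H^{1-a}\) (a negative-order space, since \(a>1\)) by duality with multiplication on \(H^{a-1}\); this uses \(\bpi\in H^{m-1}\) with \(m-1>d/2\) and \(m-1\ge a\), which is \cref{ass:target}. Then \(H^{1-a}\hookrightarrow H^{-a}\) gives \(\|\bpi f\|_{\dot H^{-a}}\le C\|f\|_{H^{1-a}}\). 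For mean-zero \(f\) the inhomogeneous and homogeneous norms agree.

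\textbf{Lower bound (the main obstacle).} Here one must invert \(f\mapsto\pi\nabla(f/\pi)\) at negative regularity. Set \(g=f/\pi\) and \(w=\pi\nabla g\). Then
\[
 \nabla\cdot(\pi\nabla g)=\nabla\cdot w,
\]
a uniformly elliptic equation with smooth positive coefficient, for the operator \(\mathcal L g=\nabla\cdot(\pi\nabla g)\). This determines \(g\) up to a constant, and the constant is fixed by \(\int\pi g=\int f=0\). The first idea is elliptic regularity of \(\mathcal L^{-1}\) from \(H^{-1-a}\) to \(H^{1-a}\), which gives \(\|g-\bar g\|_{H^{1-a}}\lesssim\|\nabla\cdot w\|_{H^{-1-a}}\lesssim\|w\|_{H^{-a}}\).

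The difficulty is that regularity of \(\mathcal L^{-1}\) at such low order needs a duality argument. Since \(\mathcal L\) is self-adjoint, the bound follows from \(H^{a-1}\to H^{a+1}\) regularity. That in turn requires \(\pi\in W^{a,\infty}\)-type smoothness, which should follow from \(\pi\in H^m\) with \(m\ge a+1\) and \(m>d/2+2\).

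A second issue is recovering \(f\) from \(g\). We have \(f=\pi g\), and since \(\int\pi g=0\) the normalization makes the correction \(\pi\bar g\) harmless. The multiplier \(\pi\) is bounded on \(H^{1-a}\) by the same duality used in the upper bound. Finally, the zero mode of \(w\), which \(\widehat\kappa_a(0)=0\) discards, must be controlled: \(\widehat w(0)\) is bounded by the nonzero modes through the relation \(\nabla g=w/\pi\), whose integral vanishes. I would handle this with a compactness/uniqueness (Peetre-type) argument. The operator \(f\mapsto\) (nonzero modes of \(\pi\nabla(f/\pi)\)) is injective on mean-zero \(f\): if these modes vanish, then \(\pi\nabla g\) is a constant vector \(c\), so \(\nabla g=c/\pi\) with zero mean, forcing \(c=0\). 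Combined with the elliptic estimate up to a compact error, this injectivity yields the lower bound.
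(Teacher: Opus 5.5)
Your argument is correct, but your lower bound takes a different route from the paper's. The paper never inverts an operator. It writes $\nabla\nu=P_\pi\nu+\Pi(\bpi\nu)$ and controls $\|\nu\|_{\dot H^{1-a}}$ by $\|\nabla\nu\|_{\dot H^{-a}}$ (Poincar\'e). It then regards $\nu\mapsto\Pi(\bpi\nu)$ as a compact map $H^{1-a}\to H^{-a}$ (bounded multiplication followed by Rellich), and closes by contradiction using the same injectivity computation $\pi\nabla(\nu/\pi)=c\Rightarrow c=0$ that you give.

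You instead invert the weighted Laplacian $\nabla\cdot(\pi\nabla\,\cdot)$ at order $-1-a$, by duality from $H^{a-1}\to H^{a+1}$ regularity. That costs a bootstrap with $H^m$ coefficients, which does close under $m\ge a+1$ and $m>d/2+2$. In return it gives an explicit constant, because your Peetre step is unnecessary. The divergence annihilates constant vectors, so $\nabla\cdot(\pi\nabla g)=\nabla\cdot\Pi w$, and the elliptic estimate already gives $\|f\|_{\dot H^{1-a}}\lesssim\|\Pi w\|_{\dot H^{-a}}$. Even if you keep $\widehat w(0)$, the relation you cite bounds it directly:
$|\widehat w(0)|\int_{\T^d}\pi^{-1}\dd x=\bigl|\int_{\T^d}\pi^{-1}\,\Pi w\dd x\bigr|\le\|\pi^{-1}\|_{H^a}\|\Pi w\|_{\dot H^{-a}}$.
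So your $c$ depends only on $\inf\pi$ and $\|\pi\|_{H^m}$, whereas the paper's compactness constant is non-effective. The trade-off is that the paper needs only Sobolev multiplication and Rellich, with no variable-coefficient elliptic regularity.

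Your extension to $\Pp(\T^d)$ also differs mildly. The paper uses a recovery sequence plus weak lower semicontinuity of the Sobolev norm. You pass both sides along heat smoothing via the step inside the proof of \cref{lem:cutoff-characterization}, which is legitimate since that lemma is proved independently. Note, however, that $\mu_\varepsilon-\pi\neq p_\varepsilon*(\mu-\pi)$. The norm convergence therefore comes from monotone convergence for $p_\varepsilon*(\mu-\pi)$ together with $p_\varepsilon*\pi\to\pi$ in $H^{1-a}$, not from monotone convergence of the multipliers directly.
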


\begin{proof}
Let \(\Pi f:=f-\int_{\T^d}f\) denote removal of the zero Fourier mode,
acting componentwise on vector distributions, and, for a zero-mass
distribution \(\nu\), define
\[
 P_\pi\nu:=\Pi(\nabla\nu-\bpi\nu).
\]
For smooth zero-mass \(\nu\), the Fourier form of \eqref{eq:gram} gives
\begin{equation}\label{eq:Ppi-energy}
 2\F^\pi(\pi+\nu)=\|P_\pi\nu\|_{\dot H^{-a}}^2,
\end{equation}
where the left-hand side denotes the natural quadratic extension to signed
perturbations.  The upper estimate in
\eqref{eq:sobolev-equivalence} follows because differentiation maps
\(\dot H^{1-a}\) to \(\dot H^{-a}\), while multiplication by \(\bpi\) is
bounded on \(H^{1-a}\) under \cref{ass:target}.

For the lower estimate, it suffices to prove that
\[
 \|\nu\|_{\dot H^{1-a}}
 \le C\|P_\pi\nu\|_{\dot H^{-a}}
 \qquad\left(\int_{\T^d}\nu=0\right).
\]
Otherwise there are smooth zero-mass \(\nu_n\) with
\(\|\nu_n\|_{\dot H^{1-a}}=1\) and
\(P_\pi\nu_n\to0\) in \(\dot H^{-a}\).  After passing to a subsequence,
\(\nu_n\rightharpoonup\nu\) in \(H^{1-a}\).  Multiplication by \(\bpi\),
viewed from \(H^{1-a}\) to \(H^{-a}\), is compact: it is bounded at the
first regularity and is followed by the compact one-order Sobolev embedding.
Hence
\[
 \nabla\nu_n
 =P_\pi\nu_n+\bpi\nu_n-\int_{\T^d}\bpi\nu_n
\]
converges in \(H^{-a}\).  Indeed,
\(\bpi\nu_n\to\bpi\nu\) strongly there, its zero Fourier mode also
converges, and distributional convergence identifies the resulting strong
limit with \(\nabla\nu\).  The zero-mass Poincar\'e equivalence
\(\|\nu_n-\nu\|_{\dot H^{1-a}}
\asymp\|\nabla(\nu_n-\nu)\|_{\dot H^{-a}}\) then gives strong convergence in
\(\dot H^{1-a}\), so \(\|\nu\|_{\dot H^{1-a}}=1\) and \(P_\pi\nu=0\).

The last identity means that
\[
 \nabla\nu-\bpi\nu=c
\]
for a constant vector \(c\).  Since \(\bpi=\nabla\log\pi\),
\[
 \pi\nabla\left(\frac{\nu}{\pi}\right)=c.
\]
Integrating each coordinate derivative on the torus gives
\[
 0=c_j\int_{\T^d}\pi^{-1}\,\dd x,
\]
and therefore \(c=0\).  Thus \(\nu/\pi\) is constant; the zero-mass condition
forces \(\nu=0\), a contradiction.  This proves the lower estimate.

It remains to pass from smooth densities to the relaxed energy.  Set
\(\nu=\mu-\pi\).  If \(\nu\in H^{1-a}\), then
\(\mu_\varepsilon=p_\varepsilon*\mu\) is a smooth probability measure and
\(\mu_\varepsilon-\pi\to\nu\) strongly in \(H^{1-a}\).  Boundedness of
\(P_\pi\) and \eqref{eq:Ppi-energy} provide a recovery sequence whose Gram
energies converge to \(\frac12\|P_\pi\nu\|_{\dot H^{-a}}^2\).  Conversely,
if smooth probabilities \(\mu_n\rightharpoonup\mu\) have bounded Gram
energy, the lower estimate just proved bounds \(\mu_n-\pi\) in
\(H^{1-a}\).  Every weak Sobolev limit agrees distributionally with
\(\mu-\pi\), and weak lower semicontinuity gives the matching lower bound.
Thus the relaxed energy equals
\(\frac12\|P_\pi(\mu-\pi)\|_{\dot H^{-a}}^2\) when
\(\mu-\pi\in H^{1-a}\), and is \(+\infty\) otherwise.  The same estimate
appears in
\citet[Lemma 4.2]{ChizatColomboColomboFernandezReal2026}.  Finally, the lower
bound shows that \(\F^\pi(\mu)=0\) only when \(\mu-\pi=0\), proving
\eqref{eq:unique-zero}.
\end{proof}

\subsection{Proofs of the long-time conclusions}

\begin{proof}[Proof of \cref{thm:ordered}]
Consider an arbitrary subsequence in \(N\).  Compactness of
\(\Pp(\Pp(\T^d))\) gives a further subsequence such that
\(\Lambda_{N,T_N}\rightharpoonup\Lambda\).  Apply
\cref{prop:empirical-liminf} to
\[
 Q^{N,T_N}:=\frac1{T_N}\int_0^{T_N}P_t^N\,\dd t.
\]
The entropy identity gives
\[
 A_N:=\int\F_N^\pi\,\dd Q^{N,T_N}
 =\frac{H_N(0)-H_N(T_N)}{2T_N}
 \le\frac{H_N(0)}{2T_N}.
\]
Consequently,
\[
 0\le\int\F^\pi(\mu)\,\dd\Lambda(\mu)
 \le\liminf_NA_N
 \le\limsup_NA_N
 \le0.
\]
By \cref{prop:identification}, \(\F^\pi\) is nonnegative and vanishes only at
\(\pi\), so \(\Lambda=\delta_\pi\).  Every subsequential limit is the same;
hence \(\Lambda_{N,T_N}\rightharpoonup\delta_\pi\).

The barycenter map on \(\Pp(\Pp(\T^d))\) is weakly continuous, and
\eqref{eq:barycenter} identifies the barycenter of \(\Lambda_{N,T_N}\) with
\(\overline P_{\mathrm{av},T_N}^{N}\).  This proves
\eqref{eq:ordered-main}.
\end{proof}

\begin{remark}[Fixed-horizon consequence]
The same compactness argument also gives a fixed-horizon estimate.  If
\(\sup_NH_N(0)\le H_*\), define
\[
 \omega_\pi(r):=
 \sup\left\{\dBL(\mu,\pi):
 \mu\in\Pp(\T^d),\ \F^\pi(\mu)\le r\right\}.
\]
Lower semicontinuity, compactness, and \eqref{eq:unique-zero} imply
\(\omega_\pi(r)\downarrow0\) as \(r\downarrow0\).  Applying
\cref{prop:empirical-liminf} at fixed \(T\), followed by convexity of
\(\F^\pi\) under barycenters, gives
\[
 \limsup_{N\to\infty}
 \dBL\!\left(\overline P_{\mathrm{av},T}^{N},\pi\right)
 \le\omega_\pi\!\left(\frac{H_*}{2T}\right).
\]
\end{remark}

\begin{proof}[Proof of \cref{cor:stationary}]
Since \(Q^N\ll\pi^{\otimes N}\) and the collision set is
\(\pi^{\otimes N}\)-null, \(Q^N\) may be regarded as a law on
\(\mathcal D_N\).  Stationarity and \eqref{eq:entropy-identity} imply
\[
 \int_{\mathcal D_N}\F_N^\pi\,\dd Q^N=0.
\]
Let \(\Lambda_N\) be the law of the empirical measure under \(Q^N\).
Along any convergent subsequence, \cref{prop:empirical-liminf} yields
\[
 \int\F^\pi(\mu)\,\dd\Lambda(\mu)
 \le\liminf_{N\to\infty}\int\F_N^\pi\,\dd Q^N=0.
\]
Nonnegativity and \eqref{eq:unique-zero} force
\(\Lambda=\delta_\pi\).  Thus every subsequential limit is \(\delta_\pi\).
The barycenter of \(\Lambda_N\) is
\(N^{-1}\sum_iQ_i^N\), so this label-averaged marginal converges to \(\pi\).
If \(Q^N\) is exchangeable, the barycenter equals \(Q_1^N\).
\end{proof}

\section{Quantitative renormalization below the logarithmic threshold}
\label{sec:quantitative-onsager}

The qualitative correction \(c_N\to0\) follows from compactness and applies
throughout \(0<\sigma<d\).  For \(0<\sigma<2\), the target-dependent part
of the Stein kernel can also be separated from a positive
translation-invariant minorant.  This gives the explicit rate in
\cref{thm:quantitative-onsager}.  Throughout this section, assume
\[
 0<\sigma<2,
 \qquad
 s:=d-\sigma=2a-2\in(0,d).
\]
The operator factorization and Bessel minorant below apply throughout this
range.

\subsection{Normalized Stein operator}

Let \(\mathsf L=(-\Delta)^{1/2}\) on mean-zero periodic distributions, with
multiplier
\(\lambda_\ell:=2\pi|\ell|\) for \(\ell\ne0\), and let \(\Pi\) denote the
mean-zero projection, componentwise for vector fields.  Set
\[
 D_{\bpi}:=\nabla-\bpi,
 \qquad
 T_\pi:=D_{\bpi}^*\mathsf L^{-2a}D_{\bpi}.
\]
The inverse power is the mean-zero pseudoinverse.  The Gram calculation in
\cref{prop:gram} can equivalently be written, for smooth signed \(f\), as
\begin{equation}\label{eq:operator-form}
 \iint G_\pi(x,y)f(x)f(y)\,\dd x\,\dd y
 =\left\langle D_{\bpi}f,\mathsf L^{-2a}D_{\bpi}f\right\rangle.
\end{equation}
For \(\bpi=0\), the corresponding operator is
\[
 T_0=\mathsf L^{2-2a}=\mathsf L^{-s}.
\]

We normalize the principal part to the identity.  On the mean-zero subspace
\(L_0^2(\T^d)\), define
\[
 \mathcal R:=\nabla\mathsf L^{-1},
 \qquad
 \mathcal C:=\mathsf L^{-a}\Pi M_{\bpi}\mathsf L^{a-1}.
\]
Here \(M_{\bpi}\) denotes multiplication by the vector field \(\bpi\).
Since \(\mathcal R^*\mathcal R=I\) on \(L_0^2\),
\begin{equation}\label{eq:normalized-B}
 \mathcal B
 :=\mathsf L^{s/2}T_\pi\mathsf L^{s/2}
 =(\mathcal R-\mathcal C)^*(\mathcal R-\mathcal C)
 =I+\mathcal K.
\end{equation}
The next lemma shows that the normalized target perturbation gains two
derivatives as a quadratic form.

\begin{lemma}[Two-order target perturbation]
\label{lem:two-order-perturbation}
There is \(C_\pi<\infty\) such that
\begin{equation}\label{eq:two-order-form}
 |\langle u,\mathcal K v\rangle|
 \le C_\pi\|u\|_{H^{-1}}\|v\|_{H^{-1}}
 \qquad(u,v\in L_0^2(\T^d)).
\end{equation}
\end{lemma}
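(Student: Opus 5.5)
The plan is to expand the factorization \eqref{eq:normalized-B}. Since \(\mathcal R^*\mathcal R=I\) on \(L_0^2\),
\[
 \mathcal K=-\mathcal R^*\mathcal C-\mathcal C^*\mathcal R+\mathcal C^*\mathcal C .
\]
I will bound the quadratic term and the two cross terms separately, each in the form \eqref{eq:two-order-form}. Throughout I use that \(\bpi=-\nabla V\in H^{m-1}\) with \(m-1>d/2+1\) and \(m-1\ge a\). Hence multiplication by \(\bpi\) is bounded on \(H^{r}\) for \(|r|\le a\), by standard periodic product estimates and duality.

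\emph{Quadratic term.} Write \(\langle u,\mathcal C^*\mathcal C v\rangle=\langle\mathcal C u,\mathcal C v\rangle\). For \(u\in H^{-1}\) we have \(\mathsf L^{a-1}u\in H^{-a}\). Multiplication by \(\bpi\) preserves \(H^{-a}\), \(\Pi\) is harmless, and \(\mathsf L^{-a}\) maps \(\dot H^{-a}\) onto \(L^2\). Therefore \(\|\mathcal C u\|_{L^2}\lesssim\|u\|_{H^{-1}}\), and Cauchy--Schwarz gives the bound for this term.

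\emph{Cross terms.} Taken individually, these lose one derivative. The pairing \(\langle\mathcal R u,\mathcal C v\rangle\) would need \(\mathcal C v\in H^1\) for \(v\in H^{-1}\), which a bare multiplication cannot supply. The fix is a commutator decomposition followed by antisymmetry. First write
\[
 \mathcal C=\mathsf L^{-1}\Pi M_{\bpi}+\mathsf L^{-a}\Pi[M_{\bpi},\mathsf L^{a-1}].
\]
The periodic commutator \([M_{\bpi},\mathsf L^{a-1}]\) has order \(a-2\) (Kato--Ponce/Coifman--Meyer type, using the smoothness of \(\bpi\) from \cref{ass:target}). So the second piece has order \(-2\) and maps \(H^{-1}\to H^{1}\); its pairing with \(\mathcal R u\in H^{-1}\) is then bounded by \(\|u\|_{H^{-1}}\|v\|_{H^{-1}}\).

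For the principal piece, set \(\mathcal A:=\mathsf L^{-1}\nabla\mathsf L^{-1}\). Since \(\nabla\) is skew-adjoint and \(\mathsf L\) is self-adjoint, \(\mathcal A^*=-\mathcal A\). The two principal cross contributions then combine as
\[
 \langle \mathcal A u,\bpi v\rangle+\langle \bpi u,\mathcal A v\rangle
 =\bigl\langle [M_{\bpi},\mathcal A]u, v\bigr\rangle,
\]
up to the sign conventions and the zero-mode projections \(\Pi\). The zero-mode projections are rank-one corrections with smooth kernels, so they are trivially bounded \(H^{-1}\times H^{-1}\). The commutator \([M_{\bpi},\mathcal A]\) has order \(-2\), one better than \(\mathcal A\). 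It therefore maps \(H^{-1}\to H^{1}\), which closes the estimate.

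The main obstacle is making the two commutator estimates rigorous on the torus with the stated finite regularity of \(\bpi\), in particular the order \(a-2\) bound for \([M_{\bpi},\mathsf L^{a-1}]\) acting \(H^{-1}\to H^{1-a}\). My first attempt is a direct Fourier-side argument. The symbol difference satisfies \(\bigl||\xi|^{a-1}-|\eta|^{a-1}\bigr|\lesssim|\xi-\eta|\,(|\xi|^{a-2}+|\eta|^{a-2})\). Weighting \(\widehat{\bpi}(\xi-\eta)\) by \(\langle\xi-\eta\rangle^{m-1}\) then gives a Schur-test bound, summable because \(m-1>d/2+1\). Low frequencies near zero are absorbed by the mean-zero projection.
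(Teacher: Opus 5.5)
Your proof is correct, but you split \(\mathcal C\) differently from the paper, and this changes which commutators you need.

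\emph{The paper's split.} The paper writes \(\mathcal C=\Pi M_{\bpi}\mathsf L^{-1}+\mathcal E\) with \(\mathcal E=-\mathsf L^{-a}\Pi[\mathsf L^a,M_{\bpi}]\mathsf L^{-1}\), so the smoothing sits on the right. The order \(-1\) cross terms then collapse, by the Leibniz rule \(\partial_jM_{b_j}-M_{b_j}\partial_j=M_{\partial_jb_j}\), to the local operator \(\mathsf L^{-1}M_{\operatorname{div}\bpi}\mathsf L^{-1}\). The only genuine commutator input is the Kato--Ponce bound \(\|[\mathsf L^a,b]h\|_2\lesssim\|b\|_{H^{m-1}}\|h\|_{H^{a-1}}\), applied to inputs of positive regularity, which is the standard setting of that inequality.

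\emph{Your split.} You write \(\mathcal C=\mathsf L^{-1}\Pi M_{\bpi}+\mathsf L^{-a}\Pi[M_{\bpi},\mathsf L^{a-1}]\), with the smoothing on the left. Skew-adjointness of \(\mathsf L^{-1}\nabla\mathsf L^{-1}\) then produces a nonlocal commutator rather than a multiplication operator. That costs an extra estimate, but an easy one. Since \(\bigl|\xi/|\xi|^2-\eta/|\eta|^2\bigr|=|\xi-\eta|/(|\xi||\eta|)\), the \(H^{-1}\to H^1\) kernel of \([M_{\bpi},\mathsf L^{-1}\nabla\mathsf L^{-1}]\) is bounded by \(|\zeta||\widehat{\bpi}(\zeta)|\), \(\zeta=\xi-\eta\), in every frequency regime. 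Your Schur test therefore closes exactly under \(m-1>d/2+1\). Your cancellation is more structural, since any skew principal part would cancel the same way; the paper's is more economical.

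\emph{The fractional commutator.} The one step whose sketch would not close as written is \([M_{\bpi},\mathsf L^{a-1}]:H^{-1}\to H^{1-a}\). The bound itself is true. However, for \(|\eta|>2|\xi|\) (high--high to low interactions) the symbol-difference estimate gains nothing. There the kernel is of size \(\langle\xi\rangle^{1-a}\langle\eta\rangle^{a}|\widehat{\bpi}(\xi-\eta)|\). An unweighted Schur test on that block requires \(\sum_\zeta\langle\zeta\rangle^{a}|\widehat{\bpi}(\zeta)|<\infty\), essentially \(m-1>a+d/2\), which \cref{ass:target} does not give. Either of two repairs works:
\begin{enumerate}
\item Estimate that block in Hilbert--Schmidt norm. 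Using \(2a-2<d\), the inner sum satisfies \(\sum_{|\xi|\lesssim|\zeta|}\langle\xi\rangle^{2-2a}\lesssim\langle\zeta\rangle^{d+2-2a}\), which leaves only \(\|\bpi\|_{H^{1+d/2}}^2\).
\item Write \(\mathsf L^{a-1}=\mathsf L^a\mathsf L^{-1}\). Modulo zero modes, \([M_{\bpi},\mathsf L^{a-1}]=[M_{\bpi},\mathsf L^a]\mathsf L^{-1}+\mathsf L^a[M_{\bpi},\mathsf L^{-1}]\). The first term is the paper's Kato--Ponce estimate. The second obeys the same exact inversion bound \(\bigl||\xi|^{-1}-|\eta|^{-1}\bigr|\le|\xi-\eta|/(|\xi||\eta|)\) as above.
\end{enumerate}
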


\begin{proof}
Put \(\alpha=m-1\).  The parameter assumptions give
\[
 \alpha>\frac d2+1,
 \qquad
 \alpha>a,
\]
the second inequality following from
\(a<1+d/2\).  We first record the periodic commutator estimate
\begin{equation}\label{eq:commutator-estimate}
 \|[\mathsf L^a,b]g\|_{H^{1-a}}
 \le C\|b\|_{H^\alpha}\|g\|_2
\end{equation}
for each component \(b\) of \(\bpi\).  By skew-adjointness of the
commutator, it is enough to prove
\[
 \|[\mathsf L^a,b]h\|_2
 \le C\|b\|_{H^\alpha}\|h\|_{H^{a-1}}.
\]
The periodic Kato--Ponce inequality, obtained from the Euclidean estimate by
standard transference, gives
\[
 \|[\mathsf L^a,b]h\|_2
 \lesssim
 \|\nabla b\|_\infty\|h\|_{H^{a-1}}
 +\|\mathsf L^ab\|_{L^p}\|h\|_{L^q},
 \qquad
 \frac1p+\frac1q=\frac12;
\]
see, for example, \citet{KatoPonce1988}.  Write
\(\delta=\alpha-a>0\).  If \(0<\delta<d/2\), take
\[
 \frac1p=\frac12-\frac{\delta}{d},
 \qquad
 \frac1q=\frac{\delta}{d}.
\]
If \(\delta>d/2\), take \(p=\infty\), \(q=2\).  At the borderline
\(\delta=d/2\), choose \(0<\varepsilon<a-1\), use
\(H^{d/2}\hookrightarrow H^{d/2-\varepsilon}\hookrightarrow L^p\) with
\(1/p=\varepsilon/d\), and take
\(1/q=1/2-\varepsilon/d\).  In every case,
\(\alpha-1>d/2\) supplies the required embedding of
\(H^{a-1}\) into \(L^q\), while
\(H^\alpha\hookrightarrow W^{1,\infty}\).  This proves
\eqref{eq:commutator-estimate}.

The mean-zero projection is important in the following exact decomposition:
\begin{equation}\label{eq:C-decomposition}
 \mathcal C
 =\Pi M_{\bpi}\mathsf L^{-1}+\mathcal E,
 \qquad
 \mathcal E
 :=-\mathsf L^{-a}\Pi[\mathsf L^a,M_{\bpi}]\mathsf L^{-1}.
\end{equation}
Indeed, this is just
\(M_{\bpi}\mathsf L^a
=\mathsf L^aM_{\bpi}-[\mathsf L^a,M_{\bpi}]\), with the homogeneous
powers interpreted as mean-zero pseudoinverses.  Estimate
\eqref{eq:commutator-estimate} gives
\[
 \mathcal E:H^{-1}\longrightarrow H^1.
\]
Sobolev multiplication by \(\bpi\in H^\alpha\) also gives
\[
 \mathcal C:H^{-1}\longrightarrow L^2.
\]

Expanding \eqref{eq:normalized-B},
\[
 \mathcal K
 =-\mathcal R^*\mathcal C-\mathcal C^*\mathcal R
  +\mathcal C^*\mathcal C.
\]
The nominal order-\(-1\) contributions in
\eqref{eq:C-decomposition} cancel exactly:
\begin{equation}\label{eq:order-one-cancellation}
 -\mathcal R^*\Pi M_{\bpi}\mathsf L^{-1}
 -(\Pi M_{\bpi}\mathsf L^{-1})^*\mathcal R
 =
 \mathsf L^{-1}M_{\operatorname{div}\bpi}\mathsf L^{-1}.
\end{equation}
The projections may be dropped in this calculation because the range of
\(\mathcal R\) is mean zero and \(\mathcal R^*\) annihilates constants.
Identity \eqref{eq:order-one-cancellation} follows componentwise from
\[
 \partial_jM_{(\bpi)_j}-M_{(\bpi)_j}\partial_j
 =M_{\partial_j(\bpi)_j}.
\]
Since \(\operatorname{div}\bpi\in L^\infty\), the right-hand side maps
\(H^{-1}\) to \(H^1\).  Every remaining term contains \(\mathcal E\) or
\(\mathcal C^*\mathcal C\), and the two mapping properties above give the
same \(H^{-1}\to H^1\) bound.  Duality yields
\eqref{eq:two-order-form}.
\end{proof}

\begin{lemma}[Strict positivity of the normalized operator]
\label{lem:B-positive}
There is \(\beta>0\), depending only on \(d,a,\pi\), such that
\begin{equation}\label{eq:B-positive}
 \mathcal B\ge\beta I
 \qquad\text{on }L_0^2(\T^d).
\end{equation}
\end{lemma}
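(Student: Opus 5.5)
Since \(\mathcal B=(\mathcal R-\mathcal C)^*(\mathcal R-\mathcal C)\) by \eqref{eq:normalized-B}, it is nonnegative and self-adjoint on \(L_0^2\). The plan is to show first that it is injective, and then to upgrade injectivity to a uniform lower bound using \cref{lem:two-order-perturbation}. The perturbation \(\mathcal K\) is compact on \(L_0^2\): by \eqref{eq:two-order-form} it extends to a bounded map \(H^{-1}\to H^1\), and the inclusions \(L^2_0\hookrightarrow H^{-1}\) and \(H^1\hookrightarrow L_0^2\) are compact on the torus. Hence \(\mathcal B=I+\mathcal K\) is Fredholm of index zero. Its spectrum in \([0,1)\) therefore consists of isolated eigenvalues of finite multiplicity, accumulating at most at \(1\). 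Consequently \(\inf\operatorname{spec}\mathcal B>0\) as soon as \(0\) is not an eigenvalue. We then take \(\beta\) to be the bottom of the spectrum; it depends only on \(d,a,\pi\), because \(\mathcal B\) does.

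For injectivity, suppose \(\mathcal B u=0\) with \(u\in L_0^2\). Then \(\langle u,\mathcal Bu\rangle=\|(\mathcal R-\mathcal C)u\|_2^2=0\), so \(\mathcal Ru=\mathcal Cu\). Set \(\nu:=\mathsf L^{s/2}u\), which is mean zero and lies in \(\dot H^{1-a}\) because \(s/2=a-1\). Unwinding the definitions of \(\mathcal R\), \(\mathcal C\) and \(T_\pi\), the relation \(\mathcal Ru=\mathcal Cu\) says \(\mathsf L^{-a}\Pi D_{\bpi}\nu=0\), i.e.\ \(\Pi(\nabla\nu-\bpi\nu)=0\). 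Equivalently, \(P_\pi\nu=0\) with \(P_\pi\) as in the proof of \cref{prop:identification}. The argument there then applies verbatim: \(\nabla\nu-\bpi\nu=c\) for a constant vector \(c\), so \(\pi\nabla(\nu/\pi)=c\); integrating \(\pi^{-1}c_j=\partial_j(\nu/\pi)\) over \(\T^d\) forces \(c=0\), and then \(\nu/\pi\) is constant and \(\nu=0\) by zero mass. This step needs only distributional calculus, since \(\nu\in H^{1-a}\) and \(\bpi\in H^{m-1}\) is a good enough multiplier by \cref{ass:target}.

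An equivalent route avoids spectral theory. The lower estimate \(\|\nu\|_{\dot H^{1-a}}\le C\|P_\pi\nu\|_{\dot H^{-a}}\) proved in \cref{prop:identification}, once rewritten through \(\nu=\mathsf L^{s/2}u\), reads \(\|u\|_2^2\le C^2\langle u,\mathcal Bu\rangle\). This gives \(\beta=C^{-2}\) directly. I would present this as the main proof and use the compactness remark only as a sanity check.

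The main obstacle is the bookkeeping of the zero mode. The projection \(\Pi\) in \(\mathcal C\) must match the projection in \(P_\pi\), and one must check that \(\|\mathsf L^{-a}\Pi w\|_2=\|w\|_{\dot H^{-a}}\) is consistent with the normalization used in \eqref{eq:Ppi-energy}. One must also confirm that \(\mathsf L^{s/2}\) is an isometry from \(L^2_0\) onto \(\dot H^{1-a}\) in the same norm convention. Once these identifications are verified, the lemma follows from \cref{prop:identification}.
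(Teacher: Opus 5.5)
Your proposal is correct. It takes essentially the paper's route, repackaged in one place.

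\textbf{The spectral argument.} Your first two paragraphs are the paper's proof almost line for line. You use $\mathcal B=(\mathcal R-\mathcal C)^*(\mathcal R-\mathcal C)\ge0$, and you get compactness of $\mathcal K$ on $L_0^2$ from \cref{lem:two-order-perturbation}. You then exclude a kernel through $f=\mathsf L^{a-1}u$, $D_{\bpi}f=c$, and integration of $c_j\pi^{-1}$ over $\T^d$, exactly as the paper does.

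\textbf{The reduction you prefer.} This is also correct. It is a different presentation of the same compactness-plus-injectivity mechanism, not a new idea. With $\nu=\mathsf L^{a-1}u$ one has $\|u\|_2=\|\nu\|_{\dot H^{1-a}}$ and $\langle u,\mathcal Bu\rangle=\|\mathsf L^{-a}\Pi D_{\bpi}\nu\|_2^2=\|P_\pi\nu\|_{\dot H^{-a}}^2$. So the lemma is literally the lower estimate in \cref{prop:identification}, with $\beta=C^{-2}$. That proposition is proved earlier and independently of this lemma, so there is no circularity.

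You should add one step. That estimate is established for smooth zero-mass $\nu$, so you need a density extension to all $u\in L_0^2$. Smooth mean-zero $u$ are dense, and $u\mapsto\langle u,\mathcal Bu\rangle$ is continuous because $\mathcal B$ is bounded on $L_0^2$; for instance, $\mathcal C:H^{-1}\to L^2$ and $\mathcal R$ is an isometry. Your concern about norm conventions is harmless. All the norms involved use the same multiplier $\lambda_\ell=2\pi|\ell|$, and any discrepancy would only change the constant.

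\textbf{What each route gives.} The reduction shows that $\beta^{-1/2}$ is exactly the coercivity constant of the negative-Sobolev equivalence. It draws compactness from $M_{\bpi}:H^{1-a}\to H^{-a}$ rather than from the two-order commutator analysis. The paper's version keeps the quantitative section self-contained and reuses the compactness of $\mathcal K$, which the Bessel-minorant argument needs anyway. Neither route gives an explicit $\beta$.
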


\begin{proof}
By \eqref{eq:normalized-B}, \(\mathcal B\ge0\).  The preceding lemma shows
that \(\mathcal K\) maps \(H^{-1}\) to \(H^1\), hence is compact on
\(L_0^2\).  It remains to exclude a kernel.

Suppose \(\mathcal Bu=0\) and set \(f=\mathsf L^{a-1}u\).  Then
\(\mathsf L^{-a}\Pi D_{\bpi}f=0\), so
\[
 D_{\bpi}f=c
\]
for a constant vector \(c\).  Since
\[
 D_{\bpi}f
 =\nabla f-\bpi f
 =\pi\nabla\left(\frac f\pi\right),
\]
we have \(\nabla(f/\pi)=c/\pi\).  Integrating each coordinate derivative on
the torus gives
\[
 0=c_j\int_{\T^d}\pi^{-1}\,\dd x,
\]
and therefore \(c=0\).  Thus \(f=c_0\pi\).  But \(f\) has zero Lebesgue mean,
whereas \(\int\pi=1\), so \(c_0=0\) and then \(u=0\).  Hence
\(\mathcal B=I+\mathcal K\) is injective.  Its spectrum can accumulate only
at \(1\), and \eqref{eq:B-positive} follows.
\end{proof}

\subsection{A positive Bessel minorant}

For \(M\ge1\), let \(J_M\) be the periodic Bessel kernel with Fourier
coefficients
\begin{equation}\label{eq:Bessel-multiplier}
 \widehat J_M(\ell)
 =\bigl(M^2+\lambda_\ell^2\bigr)^{-s/2},
 \qquad \ell\in\mathbb Z^d,
\end{equation}
where \(\lambda_0=0\).  Its heat representation is
\begin{equation}\label{eq:Bessel-heat}
 J_M(z)
 =\frac1{\Gamma(s/2)}
 \int_0^\infty t^{s/2-1}e^{-M^2t}p_t(z)\,\dd t.
\end{equation}
The periodic heat kernel is nonnegative, so \(J_M\ge0\).  Moreover,
\[
 J_M\in L^1(\T^d),
 \qquad
 \int_{\T^d}J_M\,\dd x=M^{-s}.
\]

Center this kernel with respect to \(\pi\):
\begin{equation}\label{eq:centered-Bessel}
 \overline J_M^\pi(x,y)
 :=J_M(x-y)-u_M(x)-u_M(y)+\gamma_M,
\end{equation}
where
\[
 u_M:=J_M*\pi,
 \qquad
 \gamma_M:=\int_{\T^d}u_M\,\dd\pi.
\]
Then \(\overline J_M^\pi\) is centered in each variable,
\[
 0\le u_M(x)\le\|\pi\|_\infty M^{-s},
 \qquad
 \gamma_M\ge0,
\]
and \(u_M\) is continuous because \(J_M\in L^1\) and \(\pi\) is continuous.

\begin{proposition}[Bessel minorant]\label{prop:Bessel-minorant}
There is \(M_0<\infty\) such that, for every \(M\ge M_0\),
\begin{equation}\label{eq:Bessel-minorant}
 T_\pi-J_M*\ge0
\end{equation}
as a quadratic form on smooth zero-mass functions.
\end{proposition}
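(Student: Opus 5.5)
We want \(\langle f,T_\pi f\rangle\ge\langle f,J_M*f\rangle\) for smooth zero-mass \(f\) and all large \(M\). The plan is to pass to the normalized variable \(u:=\mathsf L^{-s/2}f\), so that \(f=\mathsf L^{s/2}u\) with \(u\in L_0^2\). By \eqref{eq:normalized-B},
\[
 \langle f,T_\pi f\rangle=\langle u,\mathcal B u\rangle,
\]
while
\[
 \langle f,J_M*f\rangle=\sum_{\ell\ne0}\lambda_\ell^{s}\bigl(M^2+\lambda_\ell^2\bigr)^{-s/2}|\widehat u(\ell)|^2.
\]
Here the zero mode of \(f\) vanishes, so \(\widehat J_M(0)\) plays no role. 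Write \(m_M(\ell):=\lambda_\ell^s(M^2+\lambda_\ell^2)^{-s/2}\in(0,1)\). It then suffices to show
\[
 \langle u,\mathcal Bu\rangle\ge\sum_{\ell\ne0}m_M(\ell)|\widehat u(\ell)|^2
 \qquad(u\in L_0^2),
\]
which is a comparison between \(\mathcal B=I+\mathcal K\) and a Fourier multiplier that tends to \(1\) at high frequency and is small at low frequency.

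We combine the two lemmas of the preceding subsection. \Cref{lem:two-order-perturbation} gives
\[
 \langle u,\mathcal Bu\rangle\ge\|u\|_2^2-C_\pi\|u\|_{H^{-1}}^2,
\]
and \cref{lem:B-positive} gives \(\langle u,\mathcal Bu\rangle\ge\beta\|u\|_2^2\). Taking a convex combination with weight \(\theta\in(0,1)\),
\[
 \langle u,\mathcal Bu\rangle
 \ge\sum_{\ell\ne0}\Bigl[(1-\theta)\beta+\theta\bigl(1-C_\pi\lambda_\ell^{-2}\bigr)\Bigr]|\widehat u(\ell)|^2 ,
\]
using \(\|u\|_{H^{-1}}^2\asymp\sum\lambda_\ell^{-2}|\widehat u(\ell)|^2\) on mean-zero functions; any comparability constant is absorbed into \(C_\pi\). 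It remains to pick \(\theta\) and \(M_0\) so that the bracket dominates \(m_M(\ell)\) pointwise in \(\ell\ne0\).

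This pointwise multiplier inequality is the crux and the step I expect to be delicate. The bracket is at most \(1\) and tends to \(1\) only if \(\theta\to1\), whereas \(m_M(\ell)\to1\) as \(|\ell|\to\infty\). A fixed convex combination therefore fails at high frequency, and the simple split must be refined. The remedy is to use the sharper expansion
\[
 1-m_M(\ell)=1-\bigl(1+M^2\lambda_\ell^{-2}\bigr)^{-s/2}\ge c_s\min\{1,M^2\lambda_\ell^{-2}\}.
\]
At high frequency, \(\lambda_\ell\ge M\), this gives \(1-m_M\ge c_sM^2\lambda_\ell^{-2}\), which exceeds \(C_\pi\lambda_\ell^{-2}\) once \(M^2\ge C_\pi/c_s\). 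There we use \(\theta=1\), i.e.\ only the two-order bound, with deficit exactly of order \(\lambda_\ell^{-2}\). At low frequency, \(\lambda_\ell\le M\), the same bound gives \(m_M\le1-c_s\), which is not yet below \(\beta\).

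To handle both regimes at once, I will instead prove the operator inequality \(\mathcal B\ge I-C'\mathsf L^{-2}\) with a constant \(C'\) small enough to be dominated by the multiplier \(1-m_M\) everywhere. The approach is to use injectivity and compactness once more. For a fixed frequency cutoff \(K\), let \(P_{>K}\) be the projection onto frequencies above \(K\). On high frequencies, \(\mathcal K\) is form-bounded by \(C_\pi\lambda_\ell^{-2}\). Cross terms between low and high frequencies are bounded by \(C_\pi\|P_{\le K}u\|_{H^{-1}}\|P_{>K}u\|_{H^{-1}}\), and Cauchy--Schwarz absorbs them into the two diagonal pieces. On low frequencies, the compact finite-dimensional block is bounded below by \(\beta\). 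The target inequality is
\[
 \langle u,\mathcal Bu\rangle\ge\beta'\|P_{\le K}u\|_2^2+\sum_{\lambda_\ell>K}\bigl(1-2C_\pi\lambda_\ell^{-2}\bigr)|\widehat u(\ell)|^2
\]
for some \(\beta'>0\). We then choose \(M\) large, depending on \(K\), so that \(m_M(\ell)\le\beta'\) for \(\lambda_\ell\le K\), using \(m_M\le(K/M)^s\) there. For \(\lambda_\ell>K\) we have \(m_M\le1-c_s\min\{1,M^2\lambda_\ell^{-2}\}\le1-2C_\pi\lambda_\ell^{-2}\) once \(M^2\ge2C_\pi/c_s\) and \(K^2\ge2C_\pi/c_s\). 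The main obstacle is the cross-term bookkeeping needed to keep \(\beta'\) uniform. If that absorption fails, the fallback is a contradiction-compactness argument: take normalized \(u_n\) violating the inequality with \(M_n\to\infty\), split off the weakly convergent low-frequency part, and contradict \cref{lem:B-positive}.
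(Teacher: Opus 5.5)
Your argument is correct and is essentially the paper's proof. Both use a frequency split, \(\mathcal B\ge\beta I\) on the low block, the two-order bound \eqref{eq:two-order-form} on the high block, cross terms that come only from \(\mathcal K\) and are handled by Young's inequality, and finally \(M\) large relative to the cutoff. You package this as an \(M\)-independent lower bound for \(\mathcal B\) followed by a scalar comparison with \(m_M\), whereas the paper absorbs everything into \(\mathcal D_M=I-\mathcal J_M\) directly.

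The bookkeeping you flag is harmless. With \(\|P_{\le K}u\|_{H^{-1}}\le(2\pi)^{-1}\|P_{\le K}u\|_2\), Young's inequality gives \(\beta'=\beta/2\) uniformly in \(K\). The cost is that the constant \(2C_\pi\) on the high part must be replaced by \(C''=C_\pi+C_\pi^2/(2\pi^2\beta)\); the literal \(2C_\pi\) need not be reachable from the two lemmas. Your final symbol comparison tolerates this, because \(C''\) does not depend on \(K\) and you then only need \(K^2,M^2\ge C''/c_s\).
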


\begin{proof}
Conjugating by \(\mathsf L^{s/2}\), the Bessel convolution operator becomes
the multiplier \(\mathcal J_M\) with
\[
 \widehat{\mathcal J_M}(\ell)
 =\chi_M(\ell)
 :=\left(\frac{\lambda_\ell^2}
 {M^2+\lambda_\ell^2}\right)^{s/2},
 \qquad \ell\ne0.
\]
Thus
\[
 \mathsf L^{s/2}(T_\pi-J_M*)\mathsf L^{s/2}
 =\mathcal B-\mathcal J_M
 =\mathcal D_M+\mathcal K,
 \qquad
 \mathcal D_M:=I-\mathcal J_M.
\]
The scalar inequality
\[
 1-(1+x)^{-s/2}\ge c_s\frac{x}{1+x},
 \qquad x\ge0,
\]
gives
\begin{equation}\label{eq:D-lower-symbol}
 1-\chi_M(\ell)
 \ge c_s\frac{M^2}{M^2+\lambda_\ell^2}.
\end{equation}

Let \(P_R\) project onto \(0<\lambda_\ell\le R\), let \(Q_R=I-P_R\), and
write \(u=u_{\rm lo}+u_{\rm hi}\), where \(u_{\rm lo}=P_Ru\) and
\(u_{\rm hi}=Q_Ru\).  If \(M\ge R\ge1\),
\eqref{eq:D-lower-symbol} implies
\begin{equation}\label{eq:D-high}
 \langle u_{\rm hi},\mathcal D_Mu_{\rm hi}\rangle
 \ge c_s'R^2\|u_{\rm hi}\|_{H^{-1}}^2.
\end{equation}
Indeed,
\[
 \frac{M^2(1+\lambda^2)}{M^2+\lambda^2}
 \ge\frac{1+R^2}{2}
 \qquad(\lambda\ge R,\ M\ge R).
\]

Choose \(R\) so large that
\[
 \frac{C_\pi}{c_s'R^2}\le\frac14,
 \qquad
 \frac{4C_\pi^2}{\beta c_s'R^2}\le\frac14.
\]
Then \eqref{eq:two-order-form} and \eqref{eq:D-high} give
\begin{equation}\label{eq:Bessel-high}
 \langle u_{\rm hi},(\mathcal D_M+\mathcal K)u_{\rm hi}\rangle
 \ge\frac34\langle u_{\rm hi},\mathcal D_Mu_{\rm hi}\rangle.
\end{equation}
On the finite-dimensional low block,
\(\mathcal B\ge\beta I\), while
\[
 \|\mathcal J_MP_R\|_{2\to2}
 \le\left(\frac{R^2}{M^2+R^2}\right)^{s/2}\longrightarrow0.
\]
After increasing \(M_0=M_0(R)\),
\begin{equation}\label{eq:Bessel-low}
 \langle u_{\rm lo},(\mathcal B-\mathcal J_M)u_{\rm lo}\rangle
 \ge\frac{3\beta}{4}\|u_{\rm lo}\|_2^2.
\end{equation}
Because \(\mathcal D_M\) is diagonal in Fourier variables, the low--high
cross term comes only from \(\mathcal K\).  A final use of
\eqref{eq:two-order-form}, \eqref{eq:D-high}, and Young's inequality gives
\begin{equation}\label{eq:Bessel-cross}
 2|\langle u_{\rm lo},\mathcal Ku_{\rm hi}\rangle|
 \le\frac{\beta}{4}\|u_{\rm lo}\|_2^2
 +\frac14\langle u_{\rm hi},\mathcal D_Mu_{\rm hi}\rangle.
\end{equation}
Combining \eqref{eq:Bessel-high}--\eqref{eq:Bessel-cross},
\[
 \langle u,(\mathcal B-\mathcal J_M)u\rangle
 \ge\frac{\beta}{2}\|u_{\rm lo}\|_2^2
 +\frac12\langle u_{\rm hi},\mathcal D_Mu_{\rm hi}\rangle
 \ge0.
\]
Conjugating back proves \eqref{eq:Bessel-minorant}.
\end{proof}

\subsection{A continuous positive-semidefinite remainder}

In this subsection and the next, assume \(0<\sigma<2\).

For \(x\ne y\), define
\begin{equation}\label{eq:KM-definition}
 K_M^\pi(x,y):=G_\pi(x,y)-\overline J_M^\pi(x,y).
\end{equation}
Both terms are singular on the diagonal; the next lemma shows that their
difference has a continuous extension.

\begin{lemma}[Continuous remainder]\label{lem:KM-continuous}
For every \(M\ge M_0\), the kernel \(K_M^\pi\) extends continuously to
\(\T^d\times\T^d\), is positive semidefinite, and satisfies
\begin{equation}\label{eq:KM-diagonal}
 \sup_{x\in\T^d}K_M^\pi(x,x)\le CM^\sigma.
\end{equation}
\end{lemma}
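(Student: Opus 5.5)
Three things need proof: continuity of \(K_M^\pi\) across the diagonal, positive semidefiniteness, and the diagonal bound \eqref{eq:KM-diagonal}.

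\emph{Continuity.} Write
\[
 K_M^\pi(x,y)=G_\pi(x,y)-J_M(x-y)+u_M(x)+u_M(y)-\gamma_M .
\]
The centering terms \(u_M\) and \(\gamma_M\) are continuous, so only \(G_\pi(x,y)-J_M(x-y)\) needs attention. By \cref{prop:local-expansion} with \(0<\sigma<2\), \(G_\pi=c_0r^{-\sigma}+R_\pi\) with \(R_\pi\) bounded. The bound alone does not give continuity, so I would revisit \eqref{eq:G-expansion}. The terms \(\bpi(x)\cdot\bpi(y)\kappa_a(y-x)\) and \((\bpi(x)-\bpi(y))\cdot\nabla\kappa_a(y-x)\) extend continuously, because \(\kappa_a\) is continuous for \(\sigma<2\) and \(|x-y|\,|\nabla\kappa_a|\lesssim|x-y|^{2-\sigma}\to0\). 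The remaining piece is \(-\Delta\kappa_a=\kappa_{a-1}\)-type, with multiplier \((2\pi|\ell|)^{-s}\). The difference of multipliers,
\[
 (2\pi|\ell|)^{-s}-(M^2+\lambda_\ell^2)^{-s/2}=O\bigl(M^2\lambda_\ell^{-s-2}\bigr),
\]
is summable in \(\ell\) exactly when \(s+2>d\), i.e.\ \(\sigma<2\). Hence \(-\Delta\kappa_a-J_M\) has an absolutely convergent Fourier series and is continuous. The zero mode contributes only the constant \(M^{-s}\).

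\emph{Diagonal bound.} On the diagonal the continuous extension equals
\[
 K_M^\pi(x,x)=\bigl(-\Delta\kappa_a-J_M\bigr)(0)+|\bpi(x)|^2\kappa_a(0)+2u_M(x)-\gamma_M .
\]
The last three terms are \(O(1)\), uniformly in \(M\ge1\). For the first, sum the multiplier difference: modes with \(\lambda_\ell\le M\) contribute at most \(\sum_{\lambda\le M}\lambda^{-s}\sim M^{d-s}=M^\sigma\), and modes with \(\lambda_\ell>M\) contribute at most \(M^2\sum_{\lambda>M}\lambda^{-s-2}\sim M^{2+d-s-2}=M^\sigma\). This gives \(CM^\sigma\).

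\emph{Positive semidefiniteness.} This is the main step. Take a smooth function \(f\) and split it as \(f=c\pi+g\) with \(\int g=0\). Both \(G_\pi\) and \(\overline J_M^\pi\) are centered with respect to \(\pi\) (by \eqref{eq:centering} and by construction), so the \(\pi\)-component drops out: both quadratic forms annihilate \(\pi\) in each slot, and only \(g\) survives. For \(\overline J_M^\pi\), centering reduces the form to \(\langle g,J_M*g\rangle\), since \(\iint u_M(x)g(x)g(y)=0\) when \(\int g=0\). For \(G_\pi\), the form is \(\langle g,T_\pi g\rangle\) by \eqref{eq:operator-form}. Therefore
\[
 \iint K_M^\pi f f=\langle g,(T_\pi-J_M*)g\rangle\ge0
\]
by \cref{prop:Bessel-minorant}. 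The double integral of the singular pieces must be handled with care, since each alone is singular, but both are locally integrable for \(\sigma<d\), so splitting the integral is legitimate. Because \(K_M^\pi\) is continuous, nonnegativity for all smooth \(f\) gives nonnegativity of every finite matrix \(\bigl(K_M^\pi(x_i,x_j)\bigr)_{i,j}\), by approximating point masses with mollified densities.
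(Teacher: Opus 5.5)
Your proposal is correct and follows essentially the same route as the paper. The shared steps are: splitting \(G_\pi\) into \(-\Delta\kappa_a(y-x)\) plus a remainder that extends continuously for \(\sigma<2\); comparing multipliers via \(\lambda_\ell^{-s}-(M^2+\lambda_\ell^2)^{-s/2}\lesssim\min\{\lambda_\ell^{-s},M^2\lambda_\ell^{-s-2}\}\), split at \(\lambda_\ell=M\), to get the \(M^\sigma\) diagonal bound; and proving positivity through \(\pi\)-centering, \cref{prop:Bessel-minorant}, mollification, and continuity.

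The one difference is minor. The paper removes the \(\pi\)-component at the level of measures, setting \(\nu_0=\nu-\nu(\T^d)\pi\) and using the centering of the continuous kernel \(K_M^\pi\), and then mollifies \(\nu_0\). That way the minorant is only applied to smooth zero-mass functions. Your \(g=f-c\pi\) is only as regular as \(\pi\), so it needs a brief density step.
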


\begin{proof}
Write
\[
 G_\pi(x,y)=G_0(y-x)+R_\pi(x,y),
 \qquad
 G_0=-\Delta\kappa_a.
\]
Because \(\sigma<2\), \(\kappa_a\) is continuous at the origin.  Moreover,
\[
 |\nabla\kappa_a(z)|\lesssim1+|z|^{1-\sigma},
\]
and \(\bpi\) is Lipschitz.  Hence
\[
 |(\bpi(x)-\bpi(y))\cdot\nabla\kappa_a(y-x)|
 \lesssim |x-y|+|x-y|^{2-\sigma}\longrightarrow0.
\]
The remaining term
\(\bpi(x)\cdot\bpi(y)\kappa_a(y-x)\) is also continuous across the
diagonal.  Thus \(R_\pi\) extends continuously.

The Fourier coefficients of \(G_0-J_M\) are \(-M^{-s}\) at zero and, for
\(\ell\ne0\),
\[
 \lambda_\ell^{-s}
 -\bigl(M^2+\lambda_\ell^2\bigr)^{-s/2}.
\]
Uniformly in \(M\ge1\) and \(\ell\ne0\),
\begin{equation}\label{eq:Bessel-coefficient-bound}
 0\le
 \lambda_\ell^{-s}
 -\bigl(M^2+\lambda_\ell^2\bigr)^{-s/2}
 \le C_s\min\{\lambda_\ell^{-s},
 M^2\lambda_\ell^{-s-2}\}.
\end{equation}
Since \(s+2>d\) is equivalent to \(\sigma<2\), this sequence is absolutely
summable.  Therefore \(G_0-J_M\) is continuous.  Splitting the sum at
\(\lambda_\ell=M\) gives
\begin{equation}\label{eq:G0-J-diagonal}
 (G_0-J_M)(0)
 \le
 C\sum_{0<\lambda_\ell\le M}\lambda_\ell^{-s}
 +CM^2\sum_{\lambda_\ell>M}\lambda_\ell^{-s-2}
 \le CM^{d-s}=CM^\sigma.
\end{equation}

Using \eqref{eq:centered-Bessel}, the off-diagonal expression
\eqref{eq:KM-definition} is
\[
 (G_0-J_M)(y-x)+R_\pi(x,y)+u_M(x)+u_M(y)-\gamma_M.
\]
Every term now has a continuous extension.  The bounds on \(u_M,\gamma_M\),
\eqref{eq:G0-J-diagonal}, and boundedness of the extended \(R_\pi\) prove
\eqref{eq:KM-diagonal}.

It remains to prove positive semidefiniteness.  Both \(G_\pi\) and
\(\overline J_M^\pi\) are centered with respect to \(\pi\).  For any finite
signed measure \(\nu\), put
\(\nu_0=\nu-\nu(\T^d)\pi\).  Centering gives
\[
 \iint K_M^\pi\,\dd\nu\,\dd\nu
 =\iint K_M^\pi\,\dd\nu_0\,\dd\nu_0.
\]
For smooth \(\nu_0\) of zero mass, the right-hand side is nonnegative by
\cref{prop:Bessel-minorant}.  Periodic mollification and continuity of
\(K_M^\pi\) extend this inequality to every finite signed measure.  Hence
\(K_M^\pi\) is an ordinary continuous positive-semidefinite kernel.
\end{proof}

\subsection{Completion of the quantitative estimate}

\begin{proof}[Proof of \cref{thm:quantitative-onsager} for
\(0<\sigma<2\)]
Fix \(M\ge M_0\).  Positive semidefiniteness and
\eqref{eq:KM-diagonal} give
\[
 \sum_{i\ne j}K_M^\pi(x_i,x_j)
 \ge-\sum_iK_M^\pi(x_i,x_i)
 \ge-CNM^\sigma.
\]
Therefore
\begin{equation}\label{eq:KM-offdiagonal}
 \frac1{2N^2}\sum_{i\ne j}K_M^\pi(x_i,x_j)
 \ge-C\frac{M^\sigma}{N}.
\end{equation}

For the centered Bessel kernel,
\[
 \begin{aligned}
 \sum_{i\ne j}\overline J_M^\pi(x_i,x_j)
 ={}&
 \sum_{i\ne j}J_M(x_i-x_j)
 -2(N-1)\sum_i u_M(x_i)\\
 &+N(N-1)\gamma_M.
 \end{aligned}
\]
The first and third terms are nonnegative, while
\(u_M\le\|\pi\|_\infty M^{-s}\).  Thus
\begin{equation}\label{eq:Bessel-offdiagonal}
 \frac1{2N^2}\sum_{i\ne j}\overline J_M^\pi(x_i,x_j)
 \ge-CM^{-s}.
\end{equation}
Since \(G_\pi=K_M^\pi+\overline J_M^\pi\) off the diagonal,
\eqref{eq:KM-offdiagonal}--\eqref{eq:Bessel-offdiagonal} yield
\[
 \F_N^\pi(\bm x)
 \ge-C\left(\frac{M^\sigma}{N}+M^{-s}\right).
\]
Choose \(M=N^{1/d}\).  Since \(s+\sigma=d\),
\[
 \frac{M^\sigma}{N}=M^{-s}=N^{-1+\sigma/d}.
\]
For the finitely many \(N\) with \(N^{1/d}<M_0\), enlarge the constant.
This proves \eqref{eq:onsager}, and
\eqref{eq:quantitative-correction} follows from the definition of \(c_N\).
\end{proof}

\begin{remark}[The endpoint obstruction]
The cancellation above gains exactly two orders.  At \(\sigma=2\),
\eqref{eq:Bessel-coefficient-bound} has a high-frequency
\(|\ell|^{-d}\) tail, so the residual diagonal diverges logarithmically and
\(K_M^\pi\) is no longer a continuous finite-diagonal kernel.  For
\(\sigma>2\), the residual singularity is stronger.  A further
renormalization is therefore needed for \(2\le\sigma<d\).
\end{remark}

\section{Open directions}\label{sec:open}

Three extensions remain particularly natural.

\begin{enumerate}
\item The qualitative correction \(c_N\to0\) holds for the full range
\(0<\sigma<d\), but an explicit rate in the range
\(2\le\sigma<d\) remains open.  At and above the endpoint, the
post-cancellation remainder no longer has a finite diagonal.
The ball-growth and smearing methods of
\citet{PetracheSerfaty2017,NguyenRosenzweigSerfaty2022} control the
translation-invariant principal Riesz interaction, but the target-dependent
remainder is variable-coefficient and sign-indefinite.  An explicit rate
would require a corresponding commutator or pseudodifferential truncation
estimate.

\item A last-iterate result on logarithmic time scales would follow from a
singular propagation-of-chaos estimate with controlled time dependence.
Existing Riesz commutator estimates address the translation-invariant term
\(-\nabla\kappa_a*(\mu^N-\rho)\) in their admissible range, but not the
discrete trace generated by the target-weighted term
\(\kappa_a*(\bpi(\mu^N-\rho))\).  Controlling that trace would connect the
population decay of
\citet{ChizatColomboColomboFernandezReal2026} to a particle last-iterate
limit.

\item It would be useful to admit deterministic atomic initialization.  Such
initial laws have infinite entropy relative to \(\pi^{\otimes N}\), so the
joint-entropy argument would need an initial-layer estimate or a replacement
that measures a finite defect.
\end{enumerate}

An extension to \(\R^d\) is also natural, but it brings separate questions of
tightness, confinement, and long-range control.

\section*{Acknowledgments}
The authors used OpenAI’s ChatGPT during the preparation of this manuscript to assist with mathematical exploration, proof checking, and improvements to the language and readability. The authors reviewed and verified all resulting content and take full responsibility for the manuscript.

\appendix
\section{Periodic Riesz asymptotics}\label{app:riesz}

\begin{lemma}\label{lem:riesz-asymptotics}
Let \(1<a<1+d/2\) and \(\sigma=d+2-2a\).  Near \(z=0\), the periodic kernel
\(\kappa_a\) has the same singular part as the Euclidean Riesz potential of
order \(2a\).  In particular,
\begin{align}
 -\Delta\kappa_a(z)
 &=c_{d,a}|z|^{-\sigma}+O(1),\label{eq:asymp-laplacian}\\
 -z\cdot\nabla\kappa_a(z)
 &\ge c_{d,a}'|z|^{2-\sigma}\label{eq:asymp-radial}
\end{align}
for sufficiently small \(z\ne0\).  Moreover,
\[
 \kappa_a(z)=
 \begin{cases}
 c|z|^{2-\sigma}+O(1),&\sigma>2,\\
 c\log(1/|z|)+h(z),\quad h\in C^\infty\text{ near }0,&\sigma=2,\\
 \kappa_a(0)-c|z|^{2-\sigma}+o(|z|^{2-\sigma}),&\sigma<2,
 \end{cases}
\]
with constants whose signs are consistent with
\eqref{eq:asymp-laplacian}.  In addition,
\[
 |\nabla^2\kappa_a(z)|
 \le C\bigl(1+\dist(z,0)^{-\sigma}\bigr)
 \qquad(z\ne0).
\]
\end{lemma}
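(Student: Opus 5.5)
The plan is to compare the periodic kernel \(\kappa_a\) with the Euclidean Riesz potential through the heat-kernel (subordination) representation. With \(p_t\) the periodic heat kernel and mean-zero normalization, one has
\[
 \kappa_a(z)=\frac{1}{\Gamma(a)}\int_0^\infty t^{a-1}\bigl(p_t(z)-1\bigr)\,\dd t .
\]
This holds because \(\widehat{p_t}(\ell)=e^{-4\pi^2|\ell|^2t}\) and \(\int_0^\infty t^{a-1}e^{-\lambda^2t}\dd t=\Gamma(a)\lambda^{-2a}\). Split the integral at \(t=1\). On \([1,\infty)\), the function \(p_t-1\) decays exponentially in \(t\) together with all its spatial derivatives, so this piece is \(C^\infty\) on \(\T^d\). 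On \((0,1)\), write \(p_t(z)=\sum_{k\in\mathbb Z^d}g_t(z+k)\), where \(g_t\) is the Euclidean Gaussian. For \(|z|\le1/4\), the terms with \(k\ne0\) and all their derivatives are bounded by \(Ce^{-c/t}\), so they also contribute a smooth function. The term \(-\int_0^1t^{a-1}\,\dd t\) is a constant.

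It follows that near \(0\),
\[
 \kappa_a=\frac1{\Gamma(a)}\int_0^1t^{a-1}g_t\,\dd t+h,\qquad h\in C^\infty .
\]
The remaining Euclidean integral equals \(\frac1{\Gamma(a)}\int_0^\infty t^{a-1}g_t\,\dd t\) minus the tail \(\int_1^\infty t^{a-1}g_t\,\dd t\). Since \(g_t(z)=(4\pi t)^{-d/2}e^{-|z|^2/4t}\), the tail is smooth in \(z\) when it converges, which happens when \(a<d/2\), i.e.\ \(\sigma>2\). In that case the first term is exactly \(c_{d,a}|z|^{2a-d}=c|z|^{2-\sigma}\) with \(c>0\), which gives the \(\sigma>2\) expansion.

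When \(a\ge d/2\), I will instead work directly with \(I(z):=\int_0^1t^{a-1}g_t(z)\,\dd t\). The substitution \(t=|z|^2s\) gives
\[
 I(z)=|z|^{2a-d}\int_0^{|z|^{-2}}s^{a-1-d/2}(4\pi)^{-d/2}e^{-1/4s}\,\dd s .
\]
Expanding the upper limit produces the logarithm when \(a=d/2\). When \(a>d/2\), it produces a polynomial in \(|z|^2\) plus \(-c|z|^{2a-d}\), plus smooth corrections coming from \(e^{-1/4s}-1\). This yields \(\kappa_a(0)-c|z|^{2-\sigma}+o(|z|^{2-\sigma})\) for \(\sigma<2\). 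The cleaner alternative is to note that \(e^{-|z|^2/4t}\) is a smooth function of \(|z|^2\), so everything except the homogeneous singular part is a power series in \(|z|^2\).

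For the derivative bounds, I will differentiate under the integral sign. For \(z\ne0\), the formula \(-\Delta g_t=-\partial_tg_t\) turns \(-\Delta I\) into \(-\int_0^1t^{a-1}\partial_tg_t\,\dd t\). Integrating by parts gives
\[
 -\Delta I=(a-1)\int_0^1t^{a-2}g_t\,\dd t-g_1 .
\]
This is the same type of integral with \(a\) replaced by \(a-1\). Since \(a-1<d/2\), it equals \(c_{d,a}|z|^{2(a-1)-d}+O(1)=c_{d,a}|z|^{-\sigma}+O(1)\), and \(c_{d,a}>0\) because \(a>1\). This proves \eqref{eq:asymp-laplacian}. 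Together with the smooth remainder \(-\Delta h=O(1)\), it is also consistent with \((-\Delta)^a\kappa_a=\delta_0-1\).

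For \eqref{eq:asymp-radial}, note that \(-z\cdot\nabla g_t(z)=\frac{|z|^2}{2t}g_t(z)\ge0\). Hence
\[
 -z\cdot\nabla I=\frac{|z|^2}{2}\int_0^1t^{a-2}g_t\,\dd t\ge c|z|^2\,|z|^{-\sigma}
\]
for small \(|z|\), by the same computation as for the Laplacian. The smooth part contributes only \(O(|z|)\). Since \(2-\sigma<1\) when \(\sigma>1\), this perturbation is lower order. When \(\sigma\le1\), I first need to check that \(\nabla h(0)=0\). This follows from evenness: \(\kappa_a\) is even because its Fourier coefficients are real and even, so \(h\) is even and \(z\cdot\nabla h=O(|z|^2)\), which is lower order than \(|z|^{2-\sigma}\) since \(\sigma>0\).

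Finally, the Hessian bound follows from \(\nabla^2g_t=\bigl(\frac{zz^T}{4t^2}-\frac{I}{2t}\bigr)g_t\). Each of these terms is bounded by \(Ct^{-1}\tilde g_t\), where \(\tilde g_t\) is a Gaussian with a slightly wider variance. Therefore \(|\nabla^2I|\lesssim\int_0^1t^{a-2}\tilde g_t\,\dd t\lesssim|z|^{-\sigma}+1\). Away from the origin, \(\kappa_a\) is smooth, which gives the global bound stated in terms of \(\dist(z,0)\).

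The main obstacle I expect is the bookkeeping in the regime \(\sigma\le2\), where \(a\ge d/2\) and the Euclidean Riesz integral diverges at large \(t\). The cutoff at \(t=1\) handles this, but the logarithmic case and the identification of the constant \(\kappa_a(0)\) require care. Everything else reduces to Gaussian estimates.
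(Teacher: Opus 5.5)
Your proof is correct. It produces the same decomposition the paper invokes, a Euclidean Riesz singular part plus a smooth periodic remainder, but by a different mechanism. You use the heat-kernel subordination formula with a split at \(t=1\) and periodize the Gaussian, which is Poisson summation applied to \(p_t\). The paper instead uses a spatial cutoff and a pseudodifferential parametrix for \((-\Delta)^a\).

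What your route buys is that every sign is computed rather than asserted:
\begin{itemize}
\item \(c_{d,a}\) comes out as \((a-1)\) times a positive Gamma factor, via the identity \(\Delta g_t=\partial_tg_t\) and one integration by parts.
\item The radial bound follows from \(-z\cdot\nabla g_t=\frac{|z|^2}{2t}g_t\ge0\).
\item The Hessian bound follows from Gaussian domination.
\end{itemize}
The paper's sketch obtains these by differentiating the homogeneous part and appealing to \(\widehat\kappa_a>0\).

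You also make explicit something the paper leaves implicit. When \(\sigma\le1\), the smooth remainder \(h\) must satisfy \(\nabla h(0)=0\), which follows from evenness of \(\kappa_a\), for \(z\cdot\nabla h\) to be lower order than \(|z|^{2-\sigma}\). The same evenness is needed in the \(\sigma<2\) expansion of \(\kappa_a\) itself, which your write-up does not say. Otherwise \(h(z)-h(0)=\nabla h(0)\cdot z+O(|z|^2)\) would not be \(o(|z|^{2-\sigma})\) for \(\sigma\le1\).

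The parametrix route is shorter and applies unchanged to other elliptic symbols, but it leaves these verifications to the reader.
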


\begin{proof}
Choose a cutoff supported in a coordinate chart around the origin.  Poisson
summation, or equivalently the standard parametrix for the elliptic
pseudodifferential operator \((-\Delta)^a\), decomposes the periodic
fundamental solution into the Euclidean homogeneous fundamental solution plus
a smooth periodic remainder.  The Euclidean homogeneity is \(2a-d=2-\sigma\),
with the logarithmic replacement at zero homogeneity.  Applying \(-\Delta\)
lowers the degree by two and gives degree \(-\sigma\).  Radial
differentiation gives \eqref{eq:asymp-radial}; the coefficient is positive
because \((-\Delta)^a\kappa_a=\delta_0-1\) and
\(\widehat\kappa_a(\ell)>0\).  Differentiating the homogeneous singular
part twice, and using smoothness of the periodic remainder, gives the Hessian
bound.
\end{proof}

\subsection{The pair-energy gradient near collisions}

The next estimate rules out cancellation of the singular pair forces inside a
multiscale cluster.  It is the input used in
\cref{lem:pair-energy-barrier}.

\begin{lemma}[Closest-scale virial estimate]
\label{lem:pair-energy-gradient}
Fix \(N\ge2\) and \(2\le\sigma<d\).  There are \(c,r_0>0\) such that
\[
 |\nabla\mathcal W_N(\bm x)|
 \ge c\,r(\bm x)^{1-\sigma},
 \qquad
 r(\bm x):=\min_{i\ne j}\dist(x_i,x_j),
\]
whenever \(0<r(\bm x)<r_0\).
\end{lemma}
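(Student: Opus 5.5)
The plan is a virial argument at a well-separated scale: find a cluster that contains the closest pair and is far from all other particles relative to its own size. Test $\nabla\mathcal W_N$ against the displacement field of that cluster. Then read off the lower bound by Cauchy--Schwarz. Since $N$ is fixed, all constants may depend on $N$.

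\emph{Step 1 (scale selection).} Fix a large ratio $\Lambda\ge2$, to be chosen later, and set $\lambda_k:=\Lambda^k$ for $k=0,1,\ldots,K$ with $K:=N(N-1)/2+1$. There are only $N(N-1)/2$ pairwise distances. By pigeonhole, some $k\in\{0,\ldots,K-1\}$ has no pairwise distance in the interval $[\,2N\lambda_k r,\ \lambda_{k+1}r)$, where $r=r(\bm x)$; this needs $\Lambda\ge 2N$, so that these intervals are disjoint. Let $I$ be the connected component containing a closest pair $(i_0,j_0)$ in the graph whose edges join particles at distance less than $2N\lambda_k r$. Then $|I|\ge2$, every internal distance is at most $2N^2\lambda_k r$, and every particle outside $I$ is at distance at least $\lambda_{k+1}r$ from every particle of $I$. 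Take $r_0$ so small that $2N^2\Lambda^{K}r_0$ lies below the radius where \eqref{eq:asymp-radial} holds. Then all internal pairs lie in the asymptotic regime, and $I$ sits in one coordinate chart.

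\emph{Step 2 (virial identity).} With $q_i=x_i-\bar x_I$ for $i\in I$, symmetrization as in \cref{lem:cluster} gives
\[
 -\sum_{i\in I}q_i\cdot\nabla_{x_i}\mathcal W_N
 =\sum_{\substack{i<j\\ i,j\in I}}\bigl(-(x_j-x_i)\cdot\nabla\kappa_a(x_j-x_i)\bigr)
 \;-\;\sum_{i\in I}q_i\cdot\sum_{j\notin I}\nabla_{x_i}\kappa_a(x_j-x_i),
\]
with signs adjusted to the convention $\nabla_{x_i}\kappa_a(x_j-x_i)=-\nabla\kappa_a(x_j-x_i)$.

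For the internal sum, \eqref{eq:asymp-radial} makes every term nonnegative, and this holds also for $\sigma=2$, where $\kappa_a\sim c\log(1/|z|)$. The closest pair alone contributes at least $c\,r^{2-\sigma}$.

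For the exterior sum, use $|\nabla\kappa_a(z)|\lesssim1+|z|^{1-\sigma}$ together with $|q_i|\le 2N^2\lambda_k r$. This bounds it by
\[
 C_N\lambda_k r\bigl(1+(\lambda_{k+1}r)^{1-\sigma}\bigr)
 \le C_N\Lambda^{1-\sigma}\,\lambda_k^{2-\sigma}r^{2-\sigma}+C_N\Lambda^K r
 \le C_N\Lambda^{1-\sigma}r^{2-\sigma}+C_N\Lambda^K r .
\]
Here $\lambda_k^{2-\sigma}\le1$ because $\sigma\ge2$. Now choose $\Lambda$ so large that $C_N\Lambda^{1-\sigma}\le c/4$; this is possible since $\sigma>1$. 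Then shrink $r_0$ so that $C_N\Lambda^K r\le (c/4)r^{2-\sigma}$, which is possible because $r^{2-\sigma}\ge1$ for small $r$. The virial is then at least $(c/2)r^{2-\sigma}$.

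\emph{Step 3 (conclusion).} By Cauchy--Schwarz, the left side is at most $|\nabla\mathcal W_N|\,R_I$, and $R_I\le \sqrt N\,2N^2\Lambda^K r$. Hence
\[
 |\nabla\mathcal W_N(\bm x)|\ge c_N\Lambda^{-K}r^{1-\sigma},
\]
with $\Lambda$ and $K$ depending only on $N,d,a$. This is the claimed bound.

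The main obstacle is Step 1: a naive choice of cluster (all particles within $Cr$) can have exterior particles at distance only a bounded multiple of $r$. Their forces, of size $r^{1-\sigma}$, could then cancel the internal repulsion. The pigeonhole ladder of scales is exactly what produces a gap $\lambda_{k+1}/\lambda_k=\Lambda$ that is large enough to make the exterior forces negligible. A secondary point is that a logarithm or a negative power must not change the sign of the internal pair terms; this is handled by \eqref{eq:asymp-radial} once $r_0$ is small.
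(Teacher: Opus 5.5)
Your proof is correct, but it reaches the separated cluster by a different route from the paper.

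The paper argues by contradiction. Along a sequence with $r_n\to0$ it extracts limits of all ratios $\dist(x_i^n,x_j^n)/r_n\in[1,\infty]$ and takes the equivalence class of the closest pair. Its cluster therefore has every internal distance comparable to $r_n$ and every exterior distance $\gg r_n$. The exterior term is then shown to be $o(\Phi_\sigma(r_n))$ by subtracting the exterior force at $\bar x_I$ and using the Hessian bound $|\nabla^2\kappa_a(z)|\lesssim 1+\dist(z,0)^{-\sigma}$.

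You replace compactness by a pigeonhole over the ladder $\Lambda^k r$, $k<K=N(N-1)/2+1$. This produces, directly for each configuration, a cluster of diameter at most $2N^2\Lambda^{K}r$ whose separation from the rest is larger than its size by a factor of order $\Lambda/N^2$. Your cluster may be much larger than the closest pair. So you compare the exterior contribution at the cluster scale with the closest-pair repulsion $c\,r^{2-\sigma}$ alone, and this is exactly where $\sigma\ge2$ enters, through $\lambda_k^{2-\sigma}\le1$. The first-order bound $|\nabla\kappa_a(z)|\lesssim1+|z|^{1-\sigma}$ is enough for you. It would also have sufficed in the paper's setting, since there $(r_n/D_{k,n})^{\sigma-1}\to0$, so the Hessian subtraction is not essential in either argument.

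What each route buys:
\begin{itemize}
\item Your argument is non-sequential and gives explicit $c$ and $r_0$, with $c\gtrsim_N\Lambda^{-K}$. This constant degrades rapidly in $N$, but it is explicit.
\item The paper's compactness argument is shorter. Because its cluster sits at the exact closest scale, its virial step does not rely on the monotonicity $\lambda_k^{2-\sigma}\le1$. The price is no quantitative information on how $c$ and $r_0$ depend on $N$.
\end{itemize}
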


\begin{proof}
Argue by contradiction.  Suppose that for a sequence
\(\bm x^n\in\mathcal D_N\), with \(r_n:=r(\bm x^n)\to0\),
\[
 r_n^{\sigma-1}|\nabla\mathcal W_N(\bm x^n)|\longrightarrow0.
\]
After passing to a subsequence, the pair realizing \(r_n\) is fixed and, for
every pair \(i,j\), the ratio
\(\dist(x_i^n,x_j^n)/r_n\) converges in \([1,\infty]\).  Declare
\(i\sim j\) when this limit is finite.  The triangle inequality makes this
an equivalence relation.  Let \(I\) be the class containing the closest
pair.  Then \(|I|\ge2\), every internal distance is comparable to \(r_n\),
and
\begin{equation}\label{eq:terminal-cluster-separation}
 \frac{\dist(x_i^n,x_k^n)}{r_n}\longrightarrow\infty
 \qquad(i\in I,\ k\notin I).
\end{equation}

Choose local lifts of the points in \(I\), and write
\[
 \bar x_I^n=\frac1{|I|}\sum_{i\in I}x_i^n,
 \qquad
 q_i^n=x_i^n-\bar x_I^n,
 \qquad
 R_n^2=\sum_{i\in I}|q_i^n|^2.
\]
The internal distance comparison gives \(R_n\asymp r_n\).  Pairwise
symmetrization yields
\begin{align}
 \sum_{i\in I}q_i^n\cdot\nabla_{x_i}\mathcal W_N(\bm x^n)
 ={}&
 \sum_{\substack{i<j\\i,j\in I}}
 (x_j^n-x_i^n)\cdot
 \nabla\kappa_a(x_j^n-x_i^n)
 +E_n,                                      \label{eq:pair-virial}
\end{align}
where \(E_n\) contains the interactions with particles outside \(I\).
By \cref{lem:riesz-asymptotics}, the internal sum is bounded above by
\[
 -c\Phi_\sigma(r_n),
 \qquad
 \Phi_\sigma(r):=
 \begin{cases}
  1,&\sigma=2,\\
  r^{2-\sigma},&\sigma>2.
 \end{cases}
\]

It remains to check that the exterior terms cannot cancel this contribution.
The same local asymptotics give
\[
 |\nabla^2\kappa_a(z)|
 \le C\bigl(1+\dist(z,0)^{-\sigma}\bigr).
\]
For \(k\notin I\), let
\(D_{k,n}:=\min_{i\in I}\dist(x_i^n,x_k^n)\).  Subtracting the value of the
exterior force at \(\bar x_I^n\), whose contribution vanishes because
\(\sum_iq_i^n=0\), gives
\[
 |E_n|
 \le C R_n^2\sum_{k\notin I}\bigl(1+D_{k,n}^{-\sigma}\bigr).
\]
Here \(D_{k,n}/R_n\to\infty\), so for large \(n\) the interpolation segment
from each \(x_i^n\) to \(\bar x_I^n\) remains at least
\(D_{k,n}/2\) from \(x_k^n\), which justifies the displayed Hessian bound
throughout the mean-value estimate.
By \eqref{eq:terminal-cluster-separation}, this is
\(o(\Phi_\sigma(r_n))\): for each exterior particle, the singular part
divided by \(\Phi_\sigma(r_n)\) is \(O((r_n/D_{k,n})^\sigma)\), while the
bounded part is \(O(r_n^\sigma)\).

Consequently, the absolute value of the left-hand side of
\eqref{eq:pair-virial} is at least \(c\Phi_\sigma(r_n)\) for large \(n\).
Cauchy--Schwarz and \(R_n\asymp r_n\) now give
\[
 |\nabla\mathcal W_N(\bm x^n)|
 \ge \frac{c\Phi_\sigma(r_n)}{R_n}
 \ge c r_n^{1-\sigma},
\]
contradicting the choice of the sequence.
\end{proof}

\section{Population entropy dissipation for the singular kernel}
\label{app:population-dissipation}

We justify \eqref{eq:population-dissipation-intro} without using pointwise
reproduction or evaluating the singular kernel on the diagonal.  Let
\((\rho_t)_{t\in[0,T]}\) be a smooth, strictly positive solution of
\begin{equation}\label{eq:population-flow}
 \partial_t\rho_t+\nabla\cdot(\rho_t v_t)=0,
 \qquad
 v_t(x):=
 \int_{\T^d}\mathcal A_{\pi,y}k(y,x)\rho_t(y)\,\dd y.
\end{equation}
Set
\[
 q_t:=\bpi\rho_t-\nabla\rho_t
     =-\rho_t\nabla\log\frac{\rho_t}{\pi}.
\]
Because \(\rho_t\) is smooth, the convolution of the distribution
\(\kappa_a\) with \(q_t\) is smooth.  Distributional integration by parts
in the source variable gives
\[
 v_t
 =\int_{\T^d}\mathcal A_{\pi,y}k(y,\cdot)\rho_t(y)\,\dd y
 =\kappa_a*q_t.
\]
Differentiating the relative entropy along \eqref{eq:population-flow} and
integrating by parts on the torus therefore yields
\[
\begin{aligned}
 \frac{\dd}{\dd t}\Ent(\rho_t\mid\pi)
 &=\int_{\T^d}\log\frac{\rho_t}{\pi}\,\partial_t\rho_t\,\dd x\\
 &=\int_{\T^d}\rho_t\nabla\log\frac{\rho_t}{\pi}\cdot v_t\,\dd x\\
 &=-\int_{\T^d}q_t\cdot(\kappa_a*q_t)\,\dd x.
\end{aligned}
\]
The Fourier identity established in the proof of \cref{prop:gram} gives
\[
 \int_{\T^d}q_t\cdot(\kappa_a*q_t)\,\dd x
 =\left\|
  \int_{\T^d}\mathcal A_{\pi,y}k(y,\cdot)\,\dd\rho_t(y)
 \right\|_{\mathcal H_{\kappa_a}^d}^{2}
 =\operatorname{KSD}_{\kappa_a}^2(\rho_t\mid\pi)
 =2\F^\pi(\rho_t).
\]
This proves \eqref{eq:population-dissipation-intro}.

\section{Entropy approximation}\label{app:entropy-approx}

We give the localization argument behind \cref{prop:entropy-identity}.  Fix
\(N\) and a terminal time \(t>0\), let \(\Phi_s^N\) denote the particle flow,
and write
\[
 h_0:=\frac{\dd P_0^N}{\dd\pi^{\otimes N}}.
\]
For \(m\ge1\), set
\[
 A_m:=\left\{\bm x:
  m^{-1}\le h_0(\bm x)\le m,
  \quad
  \min_{0\le s\le t}\min_{i\ne j}
  \dist\bigl(\Phi_s^N(\bm x)_i,\Phi_s^N(\bm x)_j\bigr)
  \ge m^{-1}
 \right\}.
\]
The sets \(A_m\) increase.  Moreover,
\(P_0^N(\bigcup_mA_m)=1\): the density ratio is finite and positive
\(P_0^N\)-almost surely, and \cref{prop:no-collision} implies that the minimum
pair distance along each trajectory is positive on the compact interval
\([0,t]\).  Put \(\alpha_m:=P_0^N(A_m)\) and, after discarding finitely many
empty sets, define
\[
 P_0^{N,m}:=\frac{P_0^N(\,\cdot\cap A_m)}{\alpha_m},
 \qquad
 P_s^{N,m}:=(\Phi_s^N)_\#P_0^{N,m}.
\]

On the trajectory tube generated by \(A_m\), the vector field and its first
derivatives are bounded.  The flow is a \(C^1\) one-to-one map there, so the
area formula applies to the (not necessarily smooth) bounded density of
\(P_0^{N,m}\).  Equivalently, one may first smooth inside this tube and then
remove the smoothing.  The Jacobian satisfies
\[
 \frac{\dd}{\dd s}\log\det D\Phi_s^N(\bm x)
 =\operatorname{div}_{\bm x}\bm B^N(\Phi_s^N(\bm x)),
\]
whereas
\[
 \frac{\dd}{\dd s}\log\pi^{\otimes N}(\Phi_s^N(\bm x))
 =\sum_{i=1}^N\bpi(\Phi_s^N(\bm x)_i)
   \cdot B_i^N(\Phi_s^N(\bm x)).
\]
Combining these two formulas with \eqref{eq:divergence} gives
\begin{equation}\label{eq:localized-entropy}
 \Ent(P_t^{N,m}\mid\pi^{\otimes N})
 +2N\int_0^t\E_{P_s^{N,m}}\F_N^\pi\,\dd s
 =\Ent(P_0^{N,m}\mid\pi^{\otimes N}).
\end{equation}
This is the same calculation as in the smooth proof, now with every
integration performed on a region uniformly separated from the singular
set.

It remains to pass \(m\to\infty\).  The local positivity of \(G_\pi\) and
compactness away from the diagonal give a constant \(C_G\) such that
\(G_\pi(x,y)\ge-C_G\) for \(x\ne y\).  Hence the elementary shift
\[
 \widehat{\mathcal E}_N^\pi:=\F_N^\pi+\frac{C_G}{2}
\]
is nonnegative for every configuration; the sharper Onsager estimate is not
needed for this fixed-\(N\) approximation.  Equation
\eqref{eq:localized-entropy} becomes
\begin{equation}\label{eq:localized-renormalized-entropy}
 \Ent(P_t^{N,m}\mid\pi^{\otimes N})
 +2N\int_0^t\E_{P_s^{N,m}}\widehat{\mathcal E}_N^\pi\,\dd s
 =\Ent(P_0^{N,m}\mid\pi^{\otimes N})
  +NC_Gt.
\end{equation}
Because \(\alpha_m\uparrow1\), conditioning on \(A_m\) gives
\[
 \Ent(P_0^{N,m}\mid\pi^{\otimes N})
 =\frac1{\alpha_m}\int_{A_m}h_0\log h_0\,\dd\pi^{\otimes N}
  -\log\alpha_m
 \longrightarrow \Ent(P_0^N\mid\pi^{\otimes N}).
\]
The right-hand side of \eqref{eq:localized-renormalized-entropy} is therefore
bounded.  Lower semicontinuity first shows that
\(P_t^N=(\Phi_t^N)_\#P_0^N\) has finite entropy.  Since the flow is injective,
\(P_t^{N,m}\) is precisely \(P_t^N\) conditioned on
\(\Phi_t^N(A_m)\).  The same conditional-entropy formula at time \(t\) now
gives
\[
 \Ent(P_t^{N,m}\mid\pi^{\otimes N})
 \longrightarrow\Ent(P_t^N\mid\pi^{\otimes N}).
\]
Finally, Tonelli's theorem and monotone convergence apply to the energy:
\[
 \int_0^t\E_{P_s^{N,m}}\widehat{\mathcal E}_N^\pi\,\dd s
 =\frac1{\alpha_m}\int_{A_m}\int_0^t
  \widehat{\mathcal E}_N^\pi(\Phi_s^N(\bm x))\,\dd s\,\dd P_0^N(\bm x)
 \longrightarrow
 \int_0^t\E_{P_s^N}\widehat{\mathcal E}_N^\pi\,\dd s.
\]
Passing to the limit in \eqref{eq:localized-renormalized-entropy} and
subtracting the deterministic correction
\(NC_Gt\) proves the exact identity
\eqref{eq:entropy-identity}.

\bibliographystyle{abbrvnat}
\bibliography{references}

\end{document}